\newtheorem{remark}{Remark}
\newtheorem{theorem}{Theorem}
\newtheorem{definition}{Definition}
\newtheorem{lemma}{Lemma}
\newtheorem{problem}{Problem}
\newtheorem{assumption}{Assumption}
\begin{document}
%
% paper title
% Titles are generally capitalized except for words such as a, an, and, as,
% at, but, by, for, in, nor, of, on, or, the, to and up, which are usually
% not capitalized unless they are the first or last word of the title.
% Linebreaks \\ can be used within to get better formatting as desired.
% Do not put math or special symbols in the title.
\title{A Complete Solution to Optimal Control and Stabilization for Mean-field Systems: Part I, Discrete-time Case }
%
%
% author names and IEEE memberships
% note positions of commas and nonbreaking spaces ( ~ ) LaTeX will not break
% a structure at a ~ so this keeps an author's name from being broken across
% two lines.
% use \thanks{} to gain access to the first footnote area
% a separate \thanks must be used for each paragraph as LaTeX2e's \thanks
% was not built to handle multiple paragraphs
%

\author{Huanshui~Zhang$^{\ast}$
        and~Qingyuan~Qi~\IEEEmembership{}% <-this % stops a space
\thanks{This work is supported by the National Science Foundation of China under Grants 61120106011,61573221, 61633014.}% <-this % stops a space
\thanks{H. Zhang and Q. Qi are with School of Control Science and Engineering, Shandong University,
Jinan 250061, P.R. China. H. Zhang is the corresponding author(hszhang@sdu.edu.cn).}% <-this % stops a space
}

\maketitle

% As a general rule, do not put math, special symbols or citations
% in the abstract or keywords.
\begin{abstract}
Different from most of the previous works, this paper provides a thorough solution to the fundamental problems of linear-quadratic (LQ) control and stabilization for discrete-time mean-field systems under basic assumptions. Firstly, the sufficient and necessary condition for the solvability of mean-field LQ control problem is firstly presented in analytic expression based on the maximum principle developed in this paper, which is compared with the results obtained in literatures where only operator type solvability conditions were given. The optimal controller is given in terms of a coupled Riccati equation which is derived from the solution to forward and backward stochastic difference equation (FBSDE). Secondly, the sufficient and necessary stabilization conditions are explored. It is shown that, under exactly observability assumption, the mean-field system is stabilizable in mean square sense if and only if a coupled algebraic Riccati equation (ARE) has a unique solution $P$ and $\bar{P}$ satisfying $P>0$ and $P+\bar{P}>0$. Furthermore, under the exactly detectability assumption, which is a weaker assumption than exactly observability, we show that the mean-field system is stabilizable in mean square sense if and only if the coupled ARE has a unique solution $P$ and $\bar{P}$ satisfying $P\geq 0$ and $P+\bar{P}\geq 0$. The key techniques adopted in this paper are the maximum principle and the solution to the FBSDE obtained in this paper. The derived results in this paper forms the basis to solve the mean-field control problem for continuous-time systems \cite{con} and other related problems.
\end{abstract}

% Note that keywords are not normally used for peerreview papers.
\begin{IEEEkeywords}
Mean-field LQ control, maximum principle, Riccati equation, optimal controller, stabilizable controller.
\end{IEEEkeywords}

% For peer review papers, you can put extra information on the cover
% page as needed:
% \ifCLASSOPTIONpeerreview
% \begin{center} \bfseries EDICS Category: 3-BBND \end{center}
% \fi
%
% For peerreview papers, this IEEEtran command inserts a page break and
% creates the second title. It will be ignored for other modes.
\IEEEpeerreviewmaketitle

\section{Introduction}

In this paper, the mean-field linear quadratic optimal control and stabilization problems are considered for discrete-time case. Different from the classical stochastic control problem, mean-field terms appear in system dynamics and cost function, which combines mean-field theory with stochastic control problems. Mean-field stochastic control problem has been a hot research topic since 1950s. System state is described by a controlled mean-field stochastic differential equation (MF-SDE), which was firstly proposed in \cite{kac}, and the initial study of MF-SDEs was given by reference \cite{Mckean}. Since then, many contributions have been made in studying MF-SDEs and related topics by many researchers. See, for example, \cite{dawson}-\cite{gartner} and the references cited therein. The recent development for  mean-field control problems  can be found in \cite{buck1}, \cite{buck2}, \cite{huangjh}, \cite{yong}, \cite{ni1}, \cite{ni2} and references therein.

Reference \cite{yong} dealt with the continuous-time finite horizon mean-field LQ control problem, a sufficient and necessary solvability condition of the problem was presented in terms of operator criteria.  By using decoupling technique, the optimal controller was designed via two Riccati equations. Furthermore, the continuous-time mean-field LQ control and stabilization problem for infinite horizon was investigated in \cite{huangjh}, the equivalence of several notions of stability for mean-field system was established. It was shown that the optimal mean-field LQ controller for infinite horizon case can be presented via AREs.

For discrete-time mean-field LQ control problem, \cite{ni1} and \cite{ni2} studied the finite horizon case and infinite horizon case respectively. In \cite{ni1}, a necessary and sufficient solvability condition for finite horizon discrete-time mean-field LQ control problem was presented in operator type. Furthermore, under stronger conditions, the explicit optimal controller was derived using matrix dynamical optimization method, which is in fact a sufficient solvability solution to the discrete-time mean-field LQ control problem \cite{ni1}. Besides, for the infinite time case, the equivalence of $L^{2}$ open-loop stabilizability and $L^{2}$ closed-loop stabilizability was studied. Also the stabilizing condition was investigate in \cite{ni2}.

However, it should be highlighted that the LQ control and stabilization problems for mean-field systems remain to be further investigated although major progresses have been obtained in the above works \cite{ni1}, \cite{huangjh}, \cite{ni2}, \cite{yong} and references therein. The basic reasons are twofold: Firstly, the solvability for the LQ control was given in terms of operator type condition \cite{ni1}, which is difficult to be verified in practice; Secondly, the stabilization control problem of the mean-field system has not been essentially solved  as only sufficient conditions of stabilization were given in the previous works.

In this paper, we aim to provide a complete solution to the problems of optimal LQ control and stabilization for discrete-time mean-field systems. Different from previous works, we will derive the maximum principle (MP) for discrete-time mean-field LQ control problem which is new to the best of our knowledge. Then, by solving the coupled state equation (forward) and the costate equation (backward), the optimal LQ controller is obtained from the equilibrium condition naturally, and accordingly the sufficient and necessary solvability condition is explored in explicit expression. The controller is designed via a coupled Riccati equation which is derived from the solution to the FBSDE, and posses the similarity with the case of standard  LQ control. Finally, with convergence analysis on the coupled Riccati equation, the infinite horizon LQ controller and the stabilization condition (sufficient and necessary) is explored by defining the Lyapunov function with the optimal cost function. Two stabilization results are obtained under two different assumptions. One is under the standard assumption of exactly observability, it is shown that the mean-field system is stabilizable in mean square sense if and only if a coupled ARE has a unique solution $P$ and $\bar{P}$ satisfying $P>0$ and $P+\bar{P}>0$. The other one is under a weaker  assumption of exactly detectability, it is shown that the mean-field system is stabilizable in mean square sense if and only if the coupled ARE admits a unique solution $P$ and $\bar{P}$ satisfying $P\geq 0$ and $P+\bar{P}\geq 0$.

It should be pointed out that the presented results are parallel to the solution of the standard stochastic LQ with similar results such as controller design and stabilization conditions under the same assumptions on system and weighting matrices. In particular, the weighting matrices $R_{k}$ and $R_{k}+\bar{R}_{k}$ are only required to be positive semi-definite for optimal controller designed in this paper. It is more standard than the previous works \cite{ni1}, where the matrices are assumed to be positive definite.

A preliminary version of this paper was submitted as in \cite{cdc}, in which the finite horizon optimal control for mean-field system was considered. In this paper, both the finite horizon control problem and infinite horizon optimal control and stabilization problems are investigated. The remainder of this paper is organized as follows. Section II presents the maximum principle and the solution to finite horizon mean-field LQ control. In Section III, the infinite horizon optimal control and stabilization problems are investigated. Numerical examples are given in Section IV to illustrate main results of this paper. Some concluding remarks are given in Section V. Finally, relevant proofs are detailed in Appendices.

Throughout this paper, the following notations and definitions are used.

\textbf{Notations and definitions}: $I_{n}$ means the unit matrix with rank $n$; Superscript $'$ denotes the transpose of a matrix. Real symmetric matrix $A>0$ (or $\geq 0$) implies that $A$ is strictly positive definite (or positive semi-definite). $\mathcal{R}^{n}$ signifies the $n$-dimensional Euclidean space. $B^{-1}$ is used to indicate the inverse of real matrix $B$. $\{\Omega,\mathcal{F},\mathcal{P},\{\mathcal{F}_{k}\}_{k\geq 0}\}$ represents a complete probability space, with natural filtration $\{\mathcal{F}_{k}\}_{k\geq 0}$ generated by $\{x_{0},w_{0},\cdots,w_{k}\}$ augmented by all the $\mathcal{P}$-null sets. $E[\cdot|\mathcal{F}_{k}]$ means the conditional expectation with respect to $\mathcal{F}_{k}$ and $\mathcal{F}_{-1}$ is understood as $\{\emptyset, \Omega\}$.

\begin{definition}\label{def}
For random vector $x$, if $E(x'x)=0$, we call it zero random vector, i.e., $x=0$.
\end{definition}

\section{Finite Horizon Mean-field LQ Control Problem}

\subsection{Problem Formulation and Preliminaries }
\subsubsection{Problem Formulation}

Consider the following discrete-time mean-field system
\begin{equation}\label{ps1}
\left\{ \begin{array}{ll}
x_{k+1}=(A_{k}x_{k}+\bar{A}_{k}Ex_{k}+B_{k}u_{k}+\bar{B}_{k}Eu_{k})\\
~~~~~~~+(C_{k}x_{k}+\bar{C}_{k}Ex_{k}+D_{k}u_{k}+\bar{D}_{k}Eu_{k})w_{k},\\
x_{0}=\xi,\\
\end{array} \right.
\end{equation}
where $A_{k},\bar{A}_{k},C_{k},\bar{C}_{k}\in \mathcal{R}^{n\times n}$, and  $B_{k},\bar{B}_{k},D_{k},\bar{D}_{k}\in
\mathcal{R}^{n\times m}$, all the coefficient matrices are given deterministic. $x_{k}\in\mathcal{R}^{n}$ is the state process and
$u_{k}\in \mathcal{R}^{m}$ is the control process. The system noise $\{w_{k}\}_{k=0}^{N}$ is scalar valued random white noise with zero
mean and variance $\sigma^{2}$. $E$ is the expectation taken over the noise $\{w_{k}\}_{k=0}^{N}$ and initial state $\xi$. Denote
$\mathcal{F}_{k}$ as the natural filtration generated by $\{\xi,w_{0},\cdots,w_{k}\}$ augmented by all the $\mathcal{P}$-null sets.

By taking expectations on both sides of \eqref{ps1}, we obtain
\begin{equation}\label{ps20}
  Ex_{k+1}=(A_{k}+\bar{A}_{k})Ex_{k}+(B_{k}+\bar{B}_{k})Eu_{k}.
\end{equation}

The cost function associated with system equation \eqref{ps1} is given by:
\begin{align}\label{ps2}
  J_{N}&=\sum_{k=0}^{N}E\Big[x_{k}'Q_{k}x_{k}+(Ex_{k})'\bar{Q}_{k}Ex_{k}\notag\\
  &~~+u_{k}'R_{k}u_{k}+(Eu_{k})'\bar{R}_{k}Eu_{k}\Big]\notag\\
  &~~+E(x_{N\hspace{-0.5mm}+\hspace{-0.5mm}1}'P_{N\hspace{-0.5mm}+\hspace{-0.5mm}1}x_{N\hspace{-0.5mm}+\hspace{-0.5mm}1})
  \hspace{-1mm}+\hspace{-1mm}(Ex_{N\hspace{-0.5mm}+\hspace{-0.5mm}1})'\bar{P}_{N\hspace{-0.5mm}+\hspace{-0.5mm}1}Ex_{N\hspace{-0.5mm}+\hspace{-0.5mm}1},
\end{align}
where $Q_{k},\bar{Q}_{k},R_{k},\bar{R}_{k}$, $P_{N+1},\bar{P}_{N+1}$ are deterministic symmetric matrices with compatible dimensions.

The finite horizon mean-field LQ optimal control problem is stated as follows:
\begin{problem}\label{prob1}
For system \eqref{ps1} associated with cost function \eqref{ps2}, find $\mathcal{F}_{k-1}$-measurable controller $u_{k}$ such that
\eqref{ps2} is minimized.
\end{problem}

To guarantee the solvability of \emph{Problem \ref{prob1}}, the following standard assumption is made as follows.
\begin{assumption}\label{ass1}
The weighting matrices in \eqref{ps2} satisfy $Q_{k}\geq 0$, $Q_{k}+\bar{Q}_{k}\geq 0$, $R_{k}\geq 0$, $R_{k}+\bar{R}_{k}\geq 0$ for
$0\leq k \leq N$ and $P_{N+1}\geq 0$, $P_{N+1}+\bar{P}_{N+1}\geq 0$.
\end{assumption}
\subsubsection{Preliminaries}
In order to solve the above problem, a basic result is firstly presented as below.
\begin{lemma}\label{lemma01}
For any random vector $x\not=0$, i.e., $E(x'x)\neq 0$ as defined in Definition \ref{def}, $E(x'Mx)\geq0$, if and only if~$M\geq 0$,
where $M$ is a real symmetric matrix.
\end{lemma}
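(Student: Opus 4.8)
The statement is: for any random vector $x \neq 0$ (meaning $E(x'x) \neq 0$), we have $E(x'Mx) \geq 0$ if and only if $M \geq 0$.

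The plan is to prove both directions separately, with the "if" direction being essentially trivial and the "only if" direction requiring a choice of a suitable (deterministic) random vector to extract positive semi-definiteness of $M$.

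For the "if" direction, suppose $M \geq 0$. Then for every realization $\omega$, the scalar $x(\omega)' M x(\omega) \geq 0$ pointwise, hence $x'Mx \geq 0$ almost surely, and taking expectation gives $E(x'Mx) \geq 0$. This holds for any random vector $x$, so in particular for those with $E(x'x) \neq 0$.

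For the "only if" direction, I argue by contraposition: suppose $M \not\geq 0$. Then there exists a deterministic vector $v \in \mathcal{R}^n$ with $v'Mv < 0$. Now take $x$ to be the (degenerate) random vector that equals $v$ almost surely; this is a legitimate random vector and satisfies $E(x'x) = v'v \neq 0$ (since $v \neq 0$, as $v'Mv < 0$ forces $v \neq 0$), so it falls under the hypothesis of the lemma. For this $x$ we compute $E(x'Mx) = v'Mv < 0$, contradicting the assumption that $E(x'Mx) \geq 0$ for all such $x$. Hence $M \geq 0$.

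I expect no real obstacle here; the only point requiring a little care is the logical structure of the "only if" statement — one must read "$E(x'Mx) \geq 0$" as a hypothesis quantified over all random vectors $x$ with $E(x'x) \neq 0$, and then it suffices to exhibit a single bad (constant) $x$ when $M$ fails to be positive semi-definite. The degenerate random vector $x \equiv v$ does the job cleanly.
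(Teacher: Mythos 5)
Your proof is correct; the paper itself omits the argument as ``straightforward,'' and your reasoning (pointwise nonnegativity of $x'Mx$ for sufficiency, and a constant random vector $x\equiv v$ with $v'Mv<0$ for necessity) is exactly the standard argument the authors evidently had in mind. Your reading of the quantifier --- that $E(x'Mx)\geq 0$ is assumed for all admissible $x$, so a single deterministic counterexample suffices --- is also the reading consistent with how the lemma is invoked later in the paper (e.g., in deducing $P_{0}(N)\leq P_{0}(N+1)$ from the inequality holding for arbitrary initial states).
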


\begin{proof} The proof is straightforward and is omitted here.
\end{proof}

\begin{remark}\label{rem1}
From Lemma \ref{lemma01}, immediately we have

1) For any $x$ satisfying $x=Ex\neq 0$, i.e., $x$ is deterministic, $x'Mx\geq 0$ if and only if $M\geq 0$.

2) For any random vector $x$ satisfying $Ex=0$ and $x\neq 0$, $E(x'Mx)\geq 0$ if and only if $M\geq 0$.
\end{remark}

\begin{remark}\label{rem2}
Note that Lemma \ref{lemma01} and Remark \ref{rem1} also hold if ``$\geq$" in the conclusion is replaced by ``$\leq$", ``$<$", ``$>$" or ``$=$".
\end{remark}

\subsection{Maximum Principle}
In this subsection, we will present a general result for the maximum principle of general mean-field stochastic control problem which is the base to solve the problems studied in this paper.

Consider the general discrete-time mean-field stochastic systems
\begin{equation}\label{ps3}
x_{k+1}=f^{k}(x_{k},u_{k},Ex_{k},Eu_{k},w_{k}),
\end{equation}
where $x_{k}$ and $u_{k}$ are the system state and control input, respectively. $Ex_{k}$, $Eu_{k}$ are expectation of $x_{k}$ and $u_{k}$. Scalar-valued $w_{k}$ is the random white noise with zero mean and variance $\sigma^{2}$. $f^{k}(x_{k},u_{k},Ex_{k},Eu_{k},w_{k})$, in general, is a nonlinear function.

The corresponding scalar performance index is given in the general form
\begin{equation}\label{ps04}
  J_{N}\hspace{-1mm}=\hspace{-1mm}E\Big\{\phi(x_{N+1},Ex_{N+1})\hspace{-1mm}+\hspace{-1mm}\sum_{k=0}^{N}L^{k}(x_{k},u_{k},Ex_{k},Eu_{k})\Big\},
\end{equation}
where $\phi(x_{N+1},Ex_{N+1})$ is a function of the final time $N+1$, $x_{N+1}$ and $Ex_{N+1}$. $L^{k}(x_{k},u_{k},Ex_{k},Eu_{k})$ is a function of $x_{k},Ex_{k},u_{k},Eu_{k}$ at each time $k$.

From system \eqref{ps3}, we have that%it is easy to know that there exists deterministic function $g$ satisfying
\begin{align}\label{mp0002}
  Ex_{k+1}&=E[f^{k}(x_{k},u_{k},Ex_{k},Eu_{k},w_{k})]\notag\\
  &=g^{k}(x_{k},u_{k},Ex_{k},Eu_{k}),
\end{align}
where $g^{k}(x_{k},u_{k},Ex_{k},Eu_{k})$ is deterministic function.

The general maximum principle (necessary condition) to minimize \eqref{ps04} is given in the following theorem.

\begin{theorem}\label{maximum}
The necessary conditions for the minimizing $J_{N}$ is given as,
\begin{align}
0\hspace{-1mm}=\hspace{-1mm}E\hspace{-1mm}\left\{\hspace{-1mm}
(L^k_{u_k})'\hspace{-1mm}+\hspace{-1mm}E(L_{Eu_{k}}^{k})'\hspace{-1mm}+\hspace{-1mm}\left[\hspace{-2mm}
  \begin{array}{cc}
   f_{u_{k}}^{k} \hspace{-2mm}\\
   g_{u_{k}}^{k}     \hspace{-2mm}           \\
  \end{array}
\hspace{-2mm}\right]'\hspace{-1mm}\lambda_{k}\hspace{-1mm}+\hspace{-1mm}E\left\{\hspace{-1mm}\left[\hspace{-2mm}
  \begin{array}{cc}
   f_{Eu_{k}}^{k} \hspace{-1mm}\\
    g_{Eu_{k}}^{k}     \hspace{-1mm}           \\
  \end{array}
\hspace{-2mm}\right]'\hspace{-1mm}\lambda_{k}\right\}\hspace{-1mm}\Bigg| {\mathcal F}_{k-1}\hspace{-1mm}\right\},\label{ps43}
\end{align}
where costate $\lambda_{k}$ obeys
\begin{align}
&\lambda_{k-1}\hspace{-1mm}=\hspace{-1mm}E\Big\{\left[\hspace{-2mm}
  \begin{array}{cc}
    I_{n}\hspace{-1mm}\\
    0     \hspace{-1mm}           \\
  \end{array}
\hspace{-2mm}\right][L^k_{x_k}\hspace{-1mm}+\hspace{-1mm}E(L_{Ex_{k}}^{k})]'\hspace{-1mm}+\hspace{-1mm}[\tilde{f}^k_{x_k}]'\lambda_{k}\Big|\mathcal{F}_{k-1}\Big\},\label{ps41}
\end{align}
with final condition
\begin{equation}\label{ps42}
\lambda_{N}\hspace{-1mm}=\hspace{-1mm}\left[\hspace{-2mm}
  \begin{array}{cc}
   (\phi_{x_{N+1}})'+E(\phi_{Ex_{N+1}})'\hspace{-1mm} \\
    0            \hspace{-1mm}   \\
  \end{array}
\hspace{-2mm}\right],\\
\end{equation}
where
\[f_{x_{k}}^{k}=\frac{\partial f_{k}}{\partial x_{k}},~~f_{u_{k}}^{k}=\frac{\partial f_{k}}{\partial
u_{k}},f_{Ex_{k}}^{k}=\frac{\partial f_{k}}{\partial Ex_{k}},
f_{Eu_{k}}^{k}=\frac{\partial f_{k}}{\partial Eu_{k}},\]
\[g_{x_{k}}^{k}=\frac{\partial g_{k}}{\partial x_{k}},~~g_{u_{k}}^{k}=\frac{\partial g_{k}}{\partial
u_{k}},g_{Ex_{k}}^{k}=\frac{\partial g_{k}}{\partial Ex_{k}},
g_{Eu_{k}}^{k}=\frac{\partial g_{k}}{\partial Eu_{k}},\]
and
\begin{align*}
&\phi_{Ex_{N+1}}\hspace{-1.1mm}=\hspace{-1.1mm}\frac{\partial \phi(x_{N+1},\hspace{-0.3mm}Ex_{N+1})}{\partial Ex_{N+1}}, \hspace{-0.5mm} \phi_{x_{N+1}}\hspace{-1.1mm}=\hspace{-1.1mm}\frac{\partial \phi(x_{N+1},\hspace{-0.3mm}Ex_{N+1})}{\partial x_{N+1}},\\
&L_{x_{k}}^{k}\hspace{-0.3mm}=\hspace{-0.3mm}\frac{\partial L_{k}}{\partial x_{k}},L_{u_{k}}^{k}\hspace{-0.3mm}=\hspace{-0.3mm}\frac{\partial L_{k}}{\partial u_{k}},L_{Ex_{k}}^{k}\hspace{-0.3mm}=\hspace{-0.3mm}\frac{\partial L_{k}}{\partial Ex_{k}},L_{Eu_{k}}^{k}\hspace{-0.3mm}=\hspace{-0.3mm}\frac{\partial L_{k}}{\partial Eu_{k}},\\
&\tilde{f}^{k}_{x_{k}}=\left[
  \begin{array}{cc}
    f_{x_{k}}^{k} & f_{Ex_{k}}^{k} \\
    g_{x_{k}}^{k} & g_{Ex_{k}}^{k}\\
  \end{array}
\right],~k=0,\cdots,N.
\end{align*}
\end{theorem}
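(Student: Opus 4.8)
The plan is to derive the necessary conditions by a variational (perturbation) argument adapted to the mean-field setting, using the augmented state $(x_k, Ex_k)$ so that the dynamics \eqref{ps3}–\eqref{mp0002} together form an ordinary (non-mean-field) recursion in the pair. First I would fix an optimal control $\{u_k\}$ with optimal trajectory $\{x_k\}$, introduce an admissible perturbation $u_k \to u_k + \epsilon v_k$ with $v_k$ being $\mathcal{F}_{k-1}$-measurable, and let $\delta x_k$ denote the first-order variation of the state; by linearizing \eqref{ps3} and \eqref{mp0002} one gets the coupled variational recursion
\begin{align*}
\delta x_{k+1} &= f^k_{x_k}\delta x_k + f^k_{Ex_k}E\delta x_k + f^k_{u_k}\delta u_k + f^k_{Eu_k}E\delta u_k,\\
E\delta x_{k+1} &= g^k_{x_k}\delta x_k + g^k_{Ex_k}E\delta x_k + g^k_{u_k}\delta u_k + g^k_{Eu_k}E\delta u_k,
\end{align*}
with $\delta x_0 = 0$ and $\delta u_k = v_k$. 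Stacking $\xi_k = [\delta x_k'\ E\delta x_k']'$, this reads $\xi_{k+1} = \tilde f^k_{x_k}\xi_k + (\text{input terms})$, which is exactly the structure matched by the costate recursion \eqref{ps41}.

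Next I would compute the first-order variation of the cost \eqref{ps04}. Differentiating $\phi$ and the running costs $L^k$ and collecting terms gives
\[
\delta J_N = E\Big\{ (\phi_{x_{N+1}} + E\phi_{Ex_{N+1}})\,\delta x_{N+1} + \sum_{k=0}^N \big[(L^k_{x_k}+EL^k_{Ex_k})\delta x_k + (L^k_{u_k}+EL^k_{Eu_k})\delta u_k\big]\Big\},
\]
where I have used the identity $E[(L^k_{Ex_k})E\delta x_k] = E[(E L^k_{Ex_k})\delta x_k]$ (and similarly for the control terms), which is what produces the conditional-expectation-of-derivative terms in the statement. Then I would introduce the costate $\lambda_k$ defined by \eqref{ps41}–\eqref{ps42} and perform an Abel summation (discrete integration by parts): using $\xi_{k+1} = \tilde f^k_{x_k}\xi_k + [\,f^k_{u_k}\delta u_k + f^k_{Eu_k}E\delta u_k\,;\, g^k_{u_k}\delta u_k + g^k_{Eu_k}E\delta u_k\,]$ together with the telescoping of $\sum_k \big(\lambda_k'\xi_{k+1} - \lambda_{k-1}'\xi_k\big)$, the boundary terms collapse by \eqref{ps42} and the $\delta x_k$ terms cancel against the definition of $\lambda_{k-1}$, leaving
\[
\delta J_N = E\sum_{k=0}^N \Big\{ (L^k_{u_k})' + E(L^k_{Eu_k})' + \big[f^k_{u_k}{}'\ g^k_{u_k}{}'\big]\lambda_k + E\big\{[f^k_{Eu_k}{}'\ g^k_{Eu_k}{}']\lambda_k\big\}\Big\}' \delta u_k.
\]
Finally, since $\delta u_k = v_k$ is an arbitrary $\mathcal{F}_{k-1}$-measurable perturbation, optimality forces $\delta J_N = 0$, and conditioning on $\mathcal{F}_{k-1}$ (using the tower property and $\mathcal{F}_{k-1}$-measurability of $v_k$) yields exactly the stationarity condition \eqref{ps43}.

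I expect the main obstacle to be the careful bookkeeping of the mean-field expectation terms: one must repeatedly move $E$ across inner products via $E[(M E\zeta)'\eta] = E[(E M)'\,\zeta]$-type identities and keep track of which terms are $\mathcal{F}_{k-1}$-measurable, so that the Abel summation cancels the right pieces and the residual appears inside the conditional expectation as written. A secondary technical point is justifying that only the first-order variation matters — i.e., that the $O(\epsilon^2)$ remainder in the Taylor expansion is uniformly controlled so that $\frac{d}{d\epsilon}J_N|_{\epsilon=0}=0$ is indeed necessary; under the smoothness assumptions implicit in writing the partial derivatives $f^k_{x_k}$, etc., this is routine but should be stated. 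Everything else — the linearization, the telescoping, the final conditioning — is mechanical once the augmented-state viewpoint is adopted.
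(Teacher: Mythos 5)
Your proposal is correct, but it follows a genuinely different route from the paper's proof. You define the costate $\lambda_k$ up front via \eqref{ps41}--\eqref{ps42} and obtain the stationarity condition \eqref{ps43} by duality: a discrete summation by parts (telescoping $\sum_k E[\lambda_k'\xi_{k+1}-\lambda_{k-1}'\xi_k]$ along the augmented variational recursion $\xi_{k+1}=\tilde f^k_{x_k}\xi_k+\text{(input)}$), so that the $\delta x_k$ terms in $\delta J_N$ cancel against the adjoint recursion and only the $\delta u_k$ terms survive. The paper instead never performs this summation by parts: it solves the forward variational equation explicitly through the products of transition matrices $\tilde F_x(k,l)$ (its \eqref{mp0005}), substitutes into $\delta J_N$, interchanges the order of summation and uses conditioning tricks to conclude that $E[\mathcal G(l+1,N)\mid\mathcal F_{l-1}]=0$ for each $l$, and then, in a separate step, unrolls the backward recursion \eqref{ps41} (its \eqref{mp14}) to verify that \eqref{ps43}--\eqref{ps42} is a restatement of those conditions. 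Your adjoint argument is shorter and makes \eqref{ps41} appear naturally as the dual of the linearized dynamics, avoiding both the transition-matrix bookkeeping and the extra equivalence-verification step; the paper's route, while heavier, produces an explicit formula for $\lambda_{k-1}$ as a sum over future stages, which makes the equivalence of the two formulations transparent. Two details you should make explicit for the cancellation to be airtight: first, $\delta x_k$ (hence $\xi_k$) is $\mathcal F_{k-1}$-measurable because $x_k$ is generated by $\{\xi,w_0,\dots,w_{k-1}\}$, and this is what allows the conditional expectation in the definition of $\lambda_{k-1}$ to be removed when paired with $\xi_k$ inside the outer expectation (and, at the final step, allows the tower property to place the residual inside $E[\,\cdot\mid\mathcal F_{k-1}]$ against the $\mathcal F_{k-1}$-measurable $\delta u_k$); second, since the admissible set is the full linear space of square-integrable $\mathcal F_{k-1}$-measurable controls, both $\pm\delta u_k$ are admissible, so optimality indeed forces the first variation to vanish rather than merely be nonnegative.
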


\begin{proof}
See Appendix A.
\end{proof}

\subsection{Solution to Problem \ref{prob1}}

Following Theorem \ref{maximum}, it is easy to obtain the following maximum principle for system \eqref{ps1} associated with the cost function \eqref{ps2}.
\begin{lemma}\label{lemma1}
The necessary condition of minimizing  \eqref{ps2} for system \eqref{ps1} can be stated as:
\begin{align}
 0&=E\Big\{
R_{k}u_{k}\hspace{-1mm}+\hspace{-1mm}\bar{R}_{k}Eu_{k}\hspace{-1mm}+\hspace{-1mm}\left[\hspace{-2mm}
  \begin{array}{cc}
   B_{k}+w_{k}D_{k} \hspace{-1mm}\\
    0     \hspace{-1mm}           \\
  \end{array}
\hspace{-2mm}\right]'\lambda_{k}\hspace{-1mm}\notag\\
&+E\Big\{\left[\hspace{-2mm}
  \begin{array}{cc}
   \bar{B}_{k}+w_{k}\bar{D}_{k} \hspace{-1mm}\\
    B_{k}+\bar{B}_{k}     \hspace{-1mm}           \\
  \end{array}
\hspace{-2mm}\right]'\lambda_{k}\Big\}\Big| {\mathcal F}_{k-1}\Big\},\label{th33}
\end{align}
where costate $\lambda_{k}$ satisfies the following iteration
\begin{align}
\lambda_{k-1}&=E\Big\{\left[\hspace{-2mm}
  \begin{array}{cc}
    Q_{k}x_{k}+\bar{Q}_{k}Ex_{k}\hspace{-1mm}\\
     0     \hspace{-1mm}           \\
  \end{array}
\hspace{-2mm}\right]\hspace{-1mm}\notag\\
&+\left[
  \begin{array}{cc}
   \hspace{-1mm} A_{k}+w_{k}C_{k} &  \bar{A}_{k}+w_{k}\bar{C}_{k}\hspace{-1mm}\\
   \hspace{-1mm}  0     & A_{k}+\bar{A}_{k}\hspace{-1mm}      \\
  \end{array}
\right]'\lambda_{k}\Big|\mathcal{F}_{k-1}\Big\},\label{th32}
\end{align}
with final condition
\begin{equation}\label{th31}
  \lambda_{N}=\left[
  \begin{array}{cc}
    P_{N+1}& \bar{P}_{N+1}^{(1)}\\
     \bar{P}_{N+1}^{(2)} & \bar{P}_{N+1}^{(3)}     \\
  \end{array}
\right]\left[
  \begin{array}{cc}
   x_{N+1}\\
     Ex_{N+1}\\
  \end{array}
\right],
\end{equation}
where $\bar{P}_{N+1}^{(1)}=\bar{P}_{N+1}$, $\bar{P}_{N+1}^{(2)}=\bar{P}_{N+1}^{(3)}=0$, $P_{N+1}$ and $\bar{P}_{N+1}$ are given by the cost function \eqref{ps2}.

\end{lemma}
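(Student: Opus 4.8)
The plan is to derive Lemma \ref{lemma1} as a direct specialization of the general maximum principle in Theorem \ref{maximum}. First I would identify the concrete forms of the data functions: for system \eqref{ps1} we have $f^{k}(x_{k},u_{k},Ex_{k},Eu_{k},w_{k}) = A_{k}x_{k}+\bar{A}_{k}Ex_{k}+B_{k}u_{k}+\bar{B}_{k}Eu_{k} + (C_{k}x_{k}+\bar{C}_{k}Ex_{k}+D_{k}u_{k}+\bar{D}_{k}Eu_{k})w_{k}$, and from \eqref{ps20} the reduced dynamics $g^{k}(x_{k},u_{k},Ex_{k},Eu_{k}) = (A_{k}+\bar{A}_{k})Ex_{k}+(B_{k}+\bar{B}_{k})Eu_{k}$. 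The running cost is $L^{k} = x_{k}'Q_{k}x_{k} + (Ex_{k})'\bar{Q}_{k}Ex_{k} + u_{k}'R_{k}u_{k} + (Eu_{k})'\bar{R}_{k}Eu_{k}$ and the terminal cost is $\phi(x_{N+1},Ex_{N+1}) = x_{N+1}'P_{N+1}x_{N+1} + (Ex_{N+1})'\bar{P}_{N+1}Ex_{N+1}$.

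Next I would compute all the required partial derivatives. From $f^{k}$: $f^{k}_{u_{k}} = B_{k}+w_{k}D_{k}$, $f^{k}_{Eu_{k}} = \bar{B}_{k}+w_{k}\bar{D}_{k}$, $f^{k}_{x_{k}} = A_{k}+w_{k}C_{k}$, $f^{k}_{Ex_{k}} = \bar{A}_{k}+w_{k}\bar{C}_{k}$. From $g^{k}$: $g^{k}_{u_{k}} = 0$, $g^{k}_{Eu_{k}} = B_{k}+\bar{B}_{k}$, $g^{k}_{x_{k}} = 0$, $g^{k}_{Ex_{k}} = A_{k}+\bar{A}_{k}$. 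For the cost (using symmetry of the weighting matrices, so the gradient of $x'Qx$ is $2Qx$; the factor $2$ is harmless since the optimality condition is homogeneous, or one absorbs it): $L^{k}_{u_{k}} = R_{k}u_{k}$, $L^{k}_{Eu_{k}} = \bar{R}_{k}Eu_{k}$, $L^{k}_{x_{k}} = Q_{k}x_{k}$, $L^{k}_{Ex_{k}} = \bar{Q}_{k}Ex_{k}$, and $\phi_{x_{N+1}} = P_{N+1}x_{N+1}$, $\phi_{Ex_{N+1}} = \bar{P}_{N+1}Ex_{N+1}$. Substituting the $f,g$ blocks into the stacked matrices $\left[\begin{array}{c} f^{k}_{u_{k}} \\ g^{k}_{u_{k}} \end{array}\right]$, $\left[\begin{array}{c} f^{k}_{Eu_{k}} \\ g^{k}_{Eu_{k}} \end{array}\right]$, and $\tilde{f}^{k}_{x_{k}} = \left[\begin{array}{cc} f^{k}_{x_{k}} & f^{k}_{Ex_{k}} \\ g^{k}_{x_{k}} & g^{k}_{Ex_{k}} \end{array}\right]$ immediately yields the expressions appearing in \eqref{th33} and \eqref{th32}. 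Plugging the terminal gradients into \eqref{ps42} gives $\lambda_{N} = \left[\begin{array}{c} P_{N+1}x_{N+1}+\bar{P}_{N+1}Ex_{N+1} \\ 0 \end{array}\right]$, which is exactly \eqref{th31} with $\bar{P}^{(1)}_{N+1}=\bar{P}_{N+1}$ and $\bar{P}^{(2)}_{N+1}=\bar{P}^{(3)}_{N+1}=0$, after noting that $E\,Ex_{N+1} = Ex_{N+1}$ so the term $E(\phi_{Ex_{N+1}})'$ collapses correctly into the first block.

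The routine verification is essentially this substitution, so there is little genuine difficulty; the one point that deserves care is the bookkeeping of the $E(\cdot)$ operators and conditional expectations — in particular checking that $E(L^{k}_{Eu_{k}})' = \bar{R}_{k}Eu_{k}$ (since $Eu_{k}$ is deterministic), that the $g$-terms contribute only through the $Eu_{k}$ and $Ex_{k}$ slots (because $g^{k}$ does not depend on $x_{k}$ or $u_{k}$ directly, its $x_{k}$- and $u_{k}$-derivatives vanish), and that the inner $E\{\cdot|\mathcal{F}_{k-1}\}$ in \eqref{th32} is consistent with the measurability conventions fixed in the Notations. The main obstacle, such as it is, is purely notational: keeping the two-component costate $\lambda_{k}$ and the block structure of $\tilde{f}^{k}_{x_{k}}$ aligned with the matrix displayed in \eqref{th32}. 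I would therefore present the proof as: (i) list the derivatives, (ii) substitute into \eqref{ps43}, \eqref{ps41}, \eqref{ps42}, and (iii) observe the results coincide with \eqref{th33}, \eqref{th32}, \eqref{th31}; the full details can be relegated to an appendix or simply stated as "a direct computation."
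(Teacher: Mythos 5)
Your proposal is correct and is essentially the paper's own route: the paper offers no separate proof of Lemma \ref{lemma1}, stating only that it follows from Theorem \ref{maximum}, i.e.\ exactly the specialization you carry out by computing $f^{k}_{x_k},f^{k}_{Ex_k},f^{k}_{u_k},f^{k}_{Eu_k}$, $g^{k}_{x_k}=g^{k}_{u_k}=0$, $g^{k}_{Ex_k}=A_k+\bar{A}_k$, $g^{k}_{Eu_k}=B_k+\bar{B}_k$, and the cost gradients, then substituting into \eqref{ps43}--\eqref{ps42}. Your handling of the factor $2$ from the quadratic gradients (absorbed by rescaling the costate, which is legitimate since \eqref{ps43}--\eqref{ps42} are linear in the pair of cost gradients and $\lambda_k$) and of the collapse $E(\phi_{Ex_{N+1}})'=\bar{P}_{N+1}Ex_{N+1}$ is sound, so nothing is missing.
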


In Lemma \ref{lemma1}, $\lambda_{k}$ ($1\leq k\leq N$) is costate and (\ref{th32}) is costate-state equation. \eqref{th32} and state equation \eqref{ps1} form the FBSDE system.  \eqref{th33} is termed as equilibrium equation (condition).

The main result of this section is stated as below.
\begin{theorem}\label{main}
Under Assumption \ref{ass1}, \emph{ Problem 1} has a unique solution if and only if $\Upsilon_{k}^{(1)}$ and $\Upsilon_{k}^{(2)}$ for $k=0,\cdots,N$, as given in the below,  are all positive definite. In this case, the optimal controller $\{u_{k}\}_{k=0}^{N}$ is given as:
\begin{equation}\label{th43}\begin{split}
  u_{k}&=K_kx_{k}+\bar{K}_k Ex_{k},
\end{split}\end{equation}
where
\begin{align}
 K_{k}&=-[\Upsilon_{k}^{(1)}]^{-1}M_{k}^{(1)},\label{kk}\\
 \bar{K}_{k}&=-\left\{[\Upsilon_{k}^{(2)}]^{-1}M_{k}^{(2)}-[\Upsilon_{k}^{(1)}]^{-1}M_{k}^{(1)}\right\}, \label{kkbar}
 \end{align}
and $\Upsilon_{k}^{(1)}$, $M_{k}^{(1)}$, $\Upsilon_{k}^{(2)}$, $M_{k}^{(2)}$ are given as
\begin{align}
\Upsilon_{k}^{(1)}&=R_{k}+B_{k}'P_{k+1}B_{k}+\sigma^{2}D_{k}'P_{k+1}D_{N},\label{upsi1}\\
M_{k}^{(1)}&=B_{k}'P_{k+1}A_{k}+\sigma^{2}D_{k}'P_{k+1}C_{k},\label{h1}\\
\Upsilon_{k}^{(2)}&=R_{k}+\bar{R}_{k}+(B_{k}+\bar{B}_{k})'(P_{k+1}+\bar{P}_{k+1})(B_{k}+\bar{B}_{k})\notag\\
&+\sigma^{2}(D_{k}+\bar{D}_{k})'P_{k+1}(D_{k}+\bar{D}_{k}),\label{upsi2}\\
M_{k}^{(2)}&=(B_{k}+\bar{B}_{k})'(P_{k+1}+\bar{P}_{k+1})(A_{k}\hspace{-1mm}+\hspace{-1mm}\bar{A}_{k})\notag\\
&+\sigma^{2}(D_{k}+\bar{D}_{k})'P_{k+1}(C_{k}+\bar{C}_{k}),\label{h2}
\end{align}
while $P_{k}$ and $\bar{P}_{k}$ in the above obey the following coupled Riccati equation for $k=0,\cdots,N$.
\begin{align}
P_{k}&=Q_{k}+A_{k}'P_{k+1}A_{k}\hspace{-1mm}+\hspace{-1mm}\sigma^{2}C_{k}'P_{k+1}C_{k}\notag\\
&-[M_{k}^{(1)}]'[\Upsilon_{k}^{(1)}]^{-1}M_{k}^{(1)},\label{th41}\\
   \bar{P}_{k}&=\bar{Q}_{k}+A_{k}'P_{k+1}\bar{A}_{k}+\sigma^{2}C_{k}'P_{k+1}\bar{C}_{k}\notag\\
 &+\bar{A}_{k}'P_{k+1}A_{k}+\sigma^{2}\bar{C}_{k}'P_{k+1}C_{k}\notag\\
  &+\bar{A}_{k}'P_{k+1}\bar{A}_{k}+\sigma^{2}\bar{C}_{k}'P_{k+1}\bar{C}_{k}\notag\\
  &+(A_{k}+\bar{A}_{k})'\bar{P}_{k+1}(A_{k}+\bar{A}_{k})\notag\\
  &+[M_{k}^{(1)}]'[\Upsilon_{k}^{(1)}]^{-1}M_{k}^{(1)}-[M_{k}^{(2)}]'[\Upsilon_{k}^{(2)}]^{-1}M_{k}^{(2)},\label{th42}
\end{align}
with final condition $P_{N+1}$ and $\bar{P}_{N+1}$ given by \eqref{ps2}.

The associated optimal cost function is given by
\begin{equation}\label{jnst}
  J_{N}^{*}=E(x_{0}'P_{0}x_{0})+(Ex_{0})'\bar{P}_{0}(Ex_{0}).
\end{equation}

Moreover,  the costate $\lambda_{k-1}$ in \eqref{th32} and the state $x_{k},~Ex_{k}$ admit the following relationship,
\begin{equation}\label{th4}
\lambda_{k-1}\hspace{-1mm}=\left[
  \begin{array}{cc}
    P_{k}&\bar{P}_{k}^{(1)}\\
     \bar{P}_{k}^{(2)}&\bar{P}_{k}^{(3)}     \\
  \end{array}
\right]\left[
  \begin{array}{cc}
   x_{k}\\
   Ex_{k}\\
  \end{array}
\right],
\end{equation}
where $P_k$ obeys Riccati equation \eqref{th41}, $\bar{P}_{k}^{(1)}+\bar{P}_{k}^{(2)}+\bar{P}_{k}^{(3)}=\bar{P}_k$, and $\bar{P}_k$ satisfies Riccati equation \eqref{th42}.

\end{theorem}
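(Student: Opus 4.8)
The plan is to convert the maximum principle of Lemma \ref{lemma1} into an explicit state feedback by solving the attendant FBSDE with a costate ansatz that is linear in the augmented state, and then to certify global optimality, the value, and uniqueness by a completion-of-squares argument.

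First I would establish the costate--state identity \eqref{th4} by backward induction on $k$, starting at $k=N+1$ where it is exactly the terminal condition \eqref{th31} with $\bar P_{N+1}^{(1)}=\bar P_{N+1}$ and $\bar P_{N+1}^{(2)}=\bar P_{N+1}^{(3)}=0$. Assuming $\lambda_k$ has the block form of \eqref{th4} with the data at stage $k+1$, I substitute it, together with \eqref{ps1} and \eqref{ps20}, into the equilibrium equation \eqref{th33}; using $E[w_k\,|\,\mathcal F_{k-1}]=0$, $E[w_k^2\,|\,\mathcal F_{k-1}]=\sigma^2$ and the $\mathcal F_{k-1}$-measurability of $x_k$ and $u_k$, the conditional expectation collapses to a relation $0=\Upsilon_k^{(1)}u_k+M_k^{(1)}x_k+(\text{affine in }Ex_k,Eu_k)$. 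Taking unconditional expectations of the same relation isolates $0=\Upsilon_k^{(2)}Eu_k+M_k^{(2)}Ex_k$, so invertibility of $\Upsilon_k^{(2)}$ gives $Eu_k=-[\Upsilon_k^{(2)}]^{-1}M_k^{(2)}Ex_k$; substituting this back and inverting $\Upsilon_k^{(1)}$ produces \eqref{th43}--\eqref{kkbar}. Feeding this controller into the costate recursion \eqref{th32} and collecting the coefficients of $x_k$ and of $Ex_k$ reproduces \eqref{th4} at stage $k$, with the $(1,1)$ block satisfying \eqref{th41} and the sum of the three blocks $\bar P_k^{(1)}+\bar P_k^{(2)}+\bar P_k^{(3)}$ satisfying \eqref{th42}; this closes the induction and makes transparent that invertibility of every $\Upsilon_k^{(1)},\Upsilon_k^{(2)}$ is precisely what the FBSDE needs to be uniquely solvable.

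Next, to show the stationary solution is the true minimizer and to compute \eqref{jnst}, I would decompose every process into mean and zero-mean fluctuation, $u_k=Eu_k+(u_k-Eu_k)$ and likewise for the state; under this split the dynamics \eqref{ps1} and the running cost separate into a deterministic ``mean'' subsystem with weights $Q_k+\bar Q_k$, $R_k+\bar R_k$ (whose closed-loop Riccati data are $\Upsilon_k^{(2)},M_k^{(2)}$) and a zero-mean ``fluctuation'' subsystem with weights $Q_k$, $R_k$ (data $\Upsilon_k^{(1)},M_k^{(1)}$). With $\mathcal{V}_k=E[x_k'P_kx_k]+(Ex_k)'\bar P_kEx_k$ and using \eqref{th41}--\eqref{th42}, together with Assumption \ref{ass1}, which (once the $\Upsilon_k^{(i)}$ are invertible) forces $P_k\ge0$ and $P_k+\bar P_k\ge0$ by a Schur-complement induction and hence $\Upsilon_k^{(1)}\ge R_k\ge0$, $\Upsilon_k^{(2)}\ge R_k+\bar R_k\ge0$, a telescoping computation yields
\begin{equation*}
J_N=\mathcal{V}_0+\sum_{k=0}^{N}\Big(E\big[\nu_k'\Upsilon_k^{(1)}\nu_k\big]+\mu_k'\Upsilon_k^{(2)}\mu_k\Big),
\end{equation*}
where $\nu_k=(u_k-Eu_k)-K_k(x_k-Ex_k)$ and $\mu_k=Eu_k-(K_k+\bar K_k)Ex_k$. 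If all $\Upsilon_k^{(1)},\Upsilon_k^{(2)}>0$, this displays $J_N\ge\mathcal{V}_0$ with equality iff $u_k=K_kx_k+\bar K_kEx_k$ for all $k$, delivering at once the uniqueness of the optimal controller and the value $J_N^{*}=E(x_0'P_0x_0)+(Ex_0)'\bar P_0Ex_0$.

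For the converse I would induct downward from $k=N$: since $P_{N+1}\ge0$ by Assumption \ref{ass1}, $\Upsilon_N^{(1)}$ and $\Upsilon_N^{(2)}$ are automatically positive semidefinite, so if either fails to be positive definite it is singular; optimizing $J_N$ over $u_N$ alone (with $u_0,\dots,u_{N-1}$ frozen) reduces, modulo a term free of $u_N$, to a quadratic form whose Hessian factors through that singular block, whence the set of optimal $\mathcal F_{N-1}$-measurable $u_N$ is a positive-dimensional affine set, contradicting uniqueness; therefore $\Upsilon_N^{(i)}>0$, $P_N,\bar P_N$ are well defined, and the reduced problem on $[0,N-1]$ with terminal weights $P_N,\bar P_N$ still has a unique solution, so the induction continues. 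The step I expect to be hardest is the algebra in the construction above: pushing the $2n\times 2n$ block costate through \eqref{th32}, correctly separating the $w_k$-weighted cross terms, and verifying that the three blocks recombine into the single equation \eqref{th42}; a secondary point needing care is that in the converse the probing perturbations of $u_k$ must remain admissible, in particular $\mathcal F_{k-1}$-measurable, so that the zero-mean direction used to detect a singular $\Upsilon_k^{(1)}$ genuinely exists --- which requires a small additional remark at the initial stage where $u_0$ is forced to be deterministic.
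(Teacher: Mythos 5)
Your proposal is correct and takes essentially the same route as the paper: the linear costate ansatz for the FBSDE of Lemma \ref{lemma1} produces \eqref{th43}--\eqref{th42}, the telescoping completion of squares (the paper's \eqref{jna}) yields sufficiency, uniqueness and \eqref{jnst}, and necessity of $\Upsilon_{k}^{(1)}>0$, $\Upsilon_{k}^{(2)}>0$ is extracted by backward induction from uniqueness --- your frozen-past, null-direction perturbation is the same degenerate-quadratic-form argument that the paper runs by setting $x_{l}=0$ in the cost-to-go and invoking Lemma \ref{lemma01}. The $k=0$ subtlety you flag (a zero-mean probing perturbation must be $\mathcal{F}_{-1}$-measurable, while $\mathcal{F}_{-1}$ is trivial) is equally present in the paper's own necessity argument, so it is a shared imprecision rather than a defect specific to your proof.
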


\begin{proof}
 See Appendix B.\end{proof}

\begin{remark}
We show that the necessary and sufficient solvability conditions for the mean-field LQ optimal control are that the matrices $\Upsilon_{k}^{(1)} $, $\Upsilon_{k}^{(2)}$ are positive definite which are parallel to the solvability condition of standard LQ control. It should be noted the solvability conditions in previous works \cite{yong} and \cite{ni1} for the mean-field LQ optimal control are given with operator type which is not easy to be verified in practice.
\end{remark}

\begin{remark}
It should be noted that the weighting matrices $R_{k}$ and $R_{k}+\bar{R}_{k}$ in cost function \eqref{ps2} are only required to be positive semi-definite in this paper which is more standard than the assumptions made in most of previous works where the matrices are required to be positive definite \cite{ni1}, \cite{yong}.
\end{remark}

\begin{remark}\label{rem3}
The presented results in Theorem \ref{main} contain the standard stochastic LQ control problem as a special case. Actually, when coefficient matrices $\bar{A}_{k}$, $\bar{B}_{k}$, $\bar{C}_{k}$, $\bar{D}_{k}$ in \eqref{ps1} and weighting matrices $\bar{Q}_{k}$, $\bar{R}_{k}$, $\bar{P}_{N+1}$ in \eqref{ps2} are zero for $0\leq k\leq N$,  by \eqref{upsi1}-\eqref{h2} and  induction method, it is easy to know that $\Upsilon_{k}^{(1)}=\Upsilon_{k}^{(2)}$, $M_{k}^{(1)}=M_{k}^{(2)}$ and thus $\bar{K}_{k}=0$. Furthermore, notice \eqref{pk1}-\eqref{pk3} and \eqref{th42}, we have  $\bar{P}_{k}^{(1)}=\bar{P}_{k}^{(2)}=\bar{P}_{k}^{(3)}=\bar{P}_{k}=0$,  \eqref{th4} becomes $\lambda_{k-1}\hspace{-1mm}=\left[
  \begin{array}{cc}
   \hspace{-1mm}P_{k}x_{k}  \hspace{-1mm}\\
   \hspace{-1mm}0\hspace{-1mm}\\
  \end{array}
\right]$. Refer to reference\cite{xyz}, \cite{det} and \cite{huang}, we know \eqref{th43}, \eqref{jnst} and \eqref{th4} are exactly the solution to standard stochastic LQ control problem.
\end{remark}

%\begin{remark}\label{rem4}
%It should be highlighted that the key technique of this paper is the maximum principle for mean-field LQ control developed in this paper, which forms the base to  present a complete solution to mean-field LQ control and stabilization problems to be investigated in the next section.
%\end{remark}

\section{Infinite Horizon Mean-field LQ Control and Stabilization}

\subsection{Problem Formulation}
In this section, the infinite horizon mean-field stochastic LQ control problem is solved. Besides, the necessary and sufficient
stabilization condition for mean-field systems is investigated.

To study the stabilization problem for infinite horizon case, we consider the following time invariant system,
\begin{equation}\label{ps10}
\left\{ \begin{array}{ll}
x_{k+1}=(Ax_{k}+\bar{A}Ex_{k}+Bu_{k}+\bar{B}Eu_{k})\\
~~~~~~~+(Cx_{k}+\bar{C}Ex_{k}+Du_{k}+\bar{D}Eu_{k})w_{k},\\
x_{0}=\xi,\\
\end{array} \right.
\end{equation}
where $A,~\bar{A},~B,~\bar{B},~C,~\bar{C},~D,~\bar{D}$ are all constant coefficient matrices with compatible dimensions. The system noise $w_{k}$ is defined as in \eqref{ps1}.

The associated cost function is given by

\begin{equation}\label{ps200}\begin{split}
  J\hspace{-1mm}=\hspace{-1mm}\sum_{k=0}^{\infty}E[x_{k}'Qx_{k}\hspace{-1mm}+\hspace{-1mm}
  (Ex_{k})'\bar{Q}Ex_{k}\hspace{-1mm}+\hspace{-1mm}u_{k}'Ru_{k}\hspace{-1mm}+\hspace{-1mm}(Eu_{k})'\bar{R}Eu_{k}],
\end{split}\end{equation}
where $Q$, $\bar{Q}$, $R$, $\bar{R}$ are deterministic symmetric weighting matrices with appropriate dimensions.

Throughout this section, the following assumption is made on the weighting matrices in \eqref{ps200}.

\begin{assumption}\label{ass2}
$R>0$, $R+\bar{R}>0$, and $Q\geq 0$, $Q+\bar{Q}\geq 0$.
\end{assumption}

\begin{remark}
It should be pointed out that Assumption \ref{ass2} is a basic condition in order to investigate the stabilization for stochastic systems, see \cite{huang}, \cite{yongj}, and so forth.
\end{remark}

The following notions of stability and stabilization are introduced.
\begin{definition}
System \eqref{ps10} with $u_{k}=0$ is called asymptotically mean square stable if for any initial values $x_{0}$, there holds
\begin{equation*}
  \lim_{k\rightarrow \infty}E(x_{k}'x_{k})=0.
\end{equation*}
\end{definition}

\begin{definition}
System \eqref{ps10} is  stabilizable in mean square sense if there exists $\mathcal{F}_{k-1}$-measurable linear controller
$u_{k}$ in terms of $x_{k}$ and $Ex_{k}$, such that for any random vector $x_{0}$, the closed loop of system \eqref{ps10} is
asymptotically mean square stable.
\end{definition}

Following from references \cite{huang},\cite{zhangw} and \cite{zhangw2}, the definitions of exactly observability and exactly detectability are respectively given in the below.
\begin{definition}\label{ob1}
Consider the following mean-field system
\begin{equation}\label{mf}
 \left\{ \begin{array}{ll}
 x_{k+1}=(Ax_{k}+\bar{A}Ex_{k})+(Cx_{k}+\bar{C}Ex_{k})w_{k},\\
Y_{k}=\mathcal{Q}^{1/2}\mathbb{X}_{k}.
\end{array} \right.
\end{equation}
where $\mathcal{Q}=\left[
  \begin{array}{cc}
   Q& 0\\
   0        & Q+\bar{Q}      \\
  \end{array}
\right]$ and $\mathbb{X}_{k}=\left[
  \begin{array}{cc}
   \hspace{-1mm} x_{k}-Ex_{k}\hspace{-1mm}\\
   \hspace{-1mm} Ex_{k}     \hspace{-1mm}           \\
  \end{array}
\right]$.

System \eqref{mf} is said to be exactly observable, if for any $N\geq 0$,
\begin{equation*}
  Y_{k}= 0, ~\forall~ 0\leq k\leq N~\Rightarrow~x_{0}=0,
\end{equation*}
where the meaning of $Y_{k}=0$ and $x_{0}=0$ are given by Definition \ref{def}. For simplicity, we rewrite system \eqref{mf} as $(A,\bar{A},C,\bar{C},\mathcal{Q}^{1/2})$.
\end{definition}

\begin{definition}\label{det1}
System $(A,\bar{A},C,\bar{C},\mathcal{Q}^{1/2})$ in \eqref{mf} is said to be exactly detectable, if for any $N\geq 0$,
\begin{equation*}
  Y_{k}= 0, ~\forall~ 0\leq k\leq N~\Rightarrow~\lim_{k\rightarrow+\infty}E(x_{k}'x_{k})=0.
\end{equation*}

\end{definition}

Now we make the following two assumptions.
%exactly observability and exactly detectability for mean-field systems are respectively introduced in Definition \ref{ob1} and \ref{det1}, %which are useful in solving stabilization problem. Thus, the following two assumptions can be used in this section to explore the %stabilization condition of mean-field systems.
\begin{assumption}\label{ass3}
$(A,\bar{A},C,\bar{C},\mathcal{Q}^{1/2})$ is exactly observable.%cankaowenxian
\end{assumption}

\begin{assumption}\label{ass4}
$(A,\bar{A},C,\bar{C},\mathcal{Q}^{1/2})$ is exactly detectable.%cankaowenxian
\end{assumption}

%\begin{remark}
%From Definition \ref{ob1} and Definition \ref{det1}, it can be seen that if $(A,\bar{A},C,\bar{C},\mathcal{Q}^{1/2})$ is, %then $(A,\bar{A},C,\bar{C},\mathcal{Q}^{1/2})$ is exactly detectable.
%It can be seen that the exactly detectable assumption (Assumption \ref{ass4}) is weaker than the exactly observable (Assumption \ref{ass3}), which As in the case of deterministic systems,
%\end{remark}

\begin{remark}\label{rem13}
\begin{itemize}
\item It is noted that Definition \ref{det1} gives a different definition of `exactly detectability' from the one given in previous work \cite{ni2}. In fact, \cite{ni2} considers the mean-field system with different observation $y_k$,
\begin{equation}\label{mf2}
 \left\{ \begin{array}{ll}
 x_{k+1}=(Ax_{k}+\bar{A}Ex_{k})\hspace{-1mm}+\hspace{-1mm}(Cx_{k}+\bar{C}Ex_{k})w_{k},\\
y_{k}=Qx_{k}+\bar{Q}Ex_{k}.
\end{array} \right.
\end{equation}
%System \eqref{mf2} is denoted as $(A,\bar{A},C,\bar{C},Q,\bar{Q})$ for the convenience of discussions.
As sated in  \cite{ni2}, system \eqref{mf2} is `exactly detectable', if for any $N\geq 0$,
\begin{equation*}
  y_{k}= 0, ~\forall~ 0\leq k\leq N~\Rightarrow~\lim_{k\rightarrow +\infty}E(x_{k}'x_{k})=0.
\end{equation*}
Obviously, it is different from the definition given in this paper.
\item It should be highlighted that the exactly detectability made in Assumption \ref{ass4} is weaker \eqref{mf} than the exactly detectability made in \cite{ni2}. In fact, if the system is exactly detectable as made in Assumption \ref{ass4} of the paper, then we have that
 $$Y_{k}=\mathcal{Q}^{1/2}\mathbb{X}_{k}=0\Rightarrow \lim_{k\rightarrow+\infty} E(x_{k}'x_{k})=0.$$

Note that $Y_{k}=\mathcal{Q}^{1/2}\mathbb{X}_{k}=0$ implies
 \begin{equation}\label{equ}\left[
  \begin{array}{cc}
   Q& 0\\
   0        & Q+\bar{Q}      \\
  \end{array}
\right]^{1/2}\left[
  \begin{array}{cc}
   \hspace{-1mm} x_{k}-Ex_{k}\hspace{-1mm}\\
   \hspace{-1mm} Ex_{k}     \hspace{-1mm}           \\
  \end{array}
\right]=0.\end{equation}
Equation \eqref{equ} indicates that
\begin{align}\label{mf3}Q(x_{k}-Ex_{k})=0,~\text{and}~(Q+\bar{Q})Ex_{k}=0,\end{align}
and thus, $Qx_{k}+\bar{Q}Ex_{k}=0. $

Hence, if $(A,\bar{A},C,\bar{C},Q,\bar{Q})$ is `exactly detectable' as defined in \cite{ni2}, then
$(A,\bar{A},C,\bar{C},\mathcal{Q}^{1/2})$ would be exactly detectable as defined in Definition \ref{det1}.
\end{itemize}
\end{remark}

\begin{remark}\label{ob}
Definition \ref{ob1} and Definition \ref{det1} can be reduced to the standard exactly observability and exactly detectability for standard stochastic systems, respectively. Actually, with $\bar{A}=0, \bar{C}=0, \bar{Q}=0$ in system \eqref{mf}, Definition \ref{ob1} becomes $Q^{1/2}x_{k}=0\Rightarrow x_{0}=0$, which is exactly the observability definition for standard stochastic linear systems. Similarly, we can show that the exactly detectability given in Definition \ref{det1} can also be reduced to the standard exactly detectability definition for standard stochastic system. One can refer to reference \cite{abou}, \cite{huang}, \cite{zhw}, and so forth.
\end{remark}

The problems of infinite horizon LQ control and stabilization for discrete-time mean-field systems are stated as the following.

\begin{problem}\label{prob2}
Find  $\mathcal{F}_{k-1}$ measurable linear controller $u_{k}$ in terms of $x_{k}$ and $Ex_{k}$ to minimize the cost function \eqref{ps200} and stabilize system \eqref{ps10} in the mean square sense.
\end{problem}

\subsection{Solution to Problem \ref{prob2}}
For the convenience of discussion, to make the time horizon $N$ explicit for finite horizon mean-field LQ control problem, we re-denote  $\Upsilon_{k}^{(1)}$, $\Upsilon_{k}^{(2)}$, $M_{k}^{(1)}$, $M_{k}^{(2)}$ in \eqref{upsi1}-\eqref{h2} as $\Upsilon_{k}^{(1)}(N)$, $\Upsilon_{k}^{(2)}(N)$, $M_{k}^{(1)}(N)$ and $M_{k}^{(2)}(N)$ respectively. Accordingly, $K_{k}$, $\bar{K}_{k}$, $P_{k}$ and $\bar{P}_{k}$ in \eqref{kk}, \eqref{kkbar}, \eqref{th41} and \eqref{th42} are respectively rewritten as $K_{k}(N)$, $\bar{K}_{k}(N)$, $P_{k}(N)$ and $\bar{P}_{k}(N)$. Moreover, the coefficient matrices $A_{k}, \bar{A}_{k}, B_{k}, \bar{B}_{k}, C_{k}, \bar{C}_{k}, D_{k}, \bar{D}_{k}$ in \eqref{th43}-\eqref{th4} are time invariant as in \eqref{ps10}. The terminal weighting matrix $P_{N+1}$ and $\bar{P}_{N+1}$ in \eqref{ps2} are set to be zero.

Before presenting the solution to \emph{Problem \ref{prob2}}, the following lemmas will be given at first.
\begin{lemma}\label{111}
For any $N\geq 0$, $P_{k}(N)$ and $\bar{P}_{k}(N)$ in \eqref{th41}-\eqref{th42} satisfy $P_{k}(N)\geq 0$ and
$P_{k}(N)+\bar{P}_{k}(N)\geq 0$.
\end{lemma}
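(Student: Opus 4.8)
The plan is to exploit the optimal-cost representation from Theorem \ref{main} together with the nonnegativity of the cost functional under Assumption \ref{ass2} (specialized from Assumption \ref{ass1}). The key observation is that $P_k(N)$ and $\bar P_k(N)$ arise as the Riccati iterates for the finite-horizon problem on the interval $[k,N]$ with zero terminal weight, so by \eqref{jnst} (suitably re-indexed to start at time $k$) the optimal cost of that subproblem is exactly $E(x_k' P_k(N) x_k) + (Ex_k)'\bar P_k(N)(Ex_k)$. Since all the weighting matrices in \eqref{ps200} are positive semi-definite (indeed $Q\ge 0$, $Q+\bar Q\ge 0$, $R>0$, $R+\bar R>0$) and the terminal penalty is zero, the cost $J_N$ of \emph{any} admissible control — in particular the optimal one — is nonnegative for every initial condition. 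Hence $E(x_k' P_k(N) x_k) + (Ex_k)'\bar P_k(N)(Ex_k) \ge 0$ for all $\mathcal F_{k-1}$-measurable $x_k$.

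The remaining work is to extract the two matrix inequalities $P_k(N)\ge 0$ and $P_k(N)+\bar P_k(N)\ge 0$ from this single scalar inequality. First I would choose $x_k$ to be a random vector with $E x_k = 0$ and $E(x_k' x_k)\neq 0$; then the $\bar P_k(N)$ term drops out and Lemma \ref{lemma01} (together with Remark \ref{rem1}, part 2) gives $P_k(N)\ge 0$. Next I would choose $x_k$ deterministic, so $x_k = E x_k$, in which case the inequality reads $x_k'\big(P_k(N)+\bar P_k(N)\big)x_k \ge 0$ for all $x_k\in\mathcal R^n$; by Remark \ref{rem1}, part 1, this yields $P_k(N)+\bar P_k(N)\ge 0$. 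One should check that such $x_k$ are genuinely realizable as states of \eqref{ps10} at time $k$ starting from an appropriate $x_0$ — this is immediate for $k=0$ (just take $\xi$ with the desired distribution), and for $k\ge 1$ one can either argue directly that the map $x_0\mapsto x_k$ under $u\equiv 0$ covers enough vectors, or, more cleanly, simply restart the dynamic-programming argument at time $k$ treating $x_k$ as a free initial condition, which is exactly what the finite-horizon Theorem \ref{main} permits.

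The main obstacle I anticipate is the bookkeeping around the time-shift: verifying carefully that $P_k(N)$, $\bar P_k(N)$ as defined by the backward recursion \eqref{th41}--\eqref{th42} with terminal data $P_{N+1}=\bar P_{N+1}=0$ really do coincide with the value-function matrices of the subproblem on $[k,N]$, so that the cost formula \eqref{jnst} applies with $x_0$ replaced by $x_k$. This is essentially a consequence of the time-invariance of the coefficients and the structure of the Riccati recursion (it depends only on the "future" indices), but it should be stated explicitly. Once that identification is in place, the positivity claims follow purely from Assumption \ref{ass2} and Lemma \ref{lemma01}, with no further computation.
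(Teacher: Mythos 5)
Your plan hinges on the identity that $E(x_k'P_k(N)x_k)+(Ex_k)'\bar P_k(N)Ex_k$ equals the optimal cost of the subproblem on $[k,N]$, i.e.\ on \eqref{jnst} re-indexed at time $k$. But in Theorem \ref{main} that formula is available only once it is known that the matrices $\Upsilon^{(1)}_j(N)$ and $\Upsilon^{(2)}_j(N)$ are positive definite, and in the paper this positive definiteness is itself deduced \emph{from} the present lemma: Lemma \ref{lemma3} obtains $\Upsilon^{(1)}_k(N)>0$, $\Upsilon^{(2)}_k(N)>0$ from $P_k(N)\ge 0$, $P_k(N)+\bar P_k(N)\ge 0$ together with $R>0$, $R+\bar R>0$. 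As written, your argument is therefore circular: without the lemma you do not yet know that the recursion \eqref{th41}--\eqref{th42} is even well defined (the gains $K_k(N),\bar K_k(N)$ require inverting $\Upsilon^{(1)}_k(N),\Upsilon^{(2)}_k(N)$), let alone that the subproblem is uniquely solvable with value $E(x_k'P_k(N)x_k)+(Ex_k)'\bar P_k(N)Ex_k$. The obstacle you single out (the time-shift bookkeeping, and whether a given $x_k$ is reachable) is benign --- one may indeed treat $x_k$ as a free initial condition --- whereas the solvability/well-definedness issue is the step that actually needs an argument.

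The gap can be closed by interleaving the two claims in one backward induction: with $P_{N+1}=\bar P_{N+1}=0$ one has $\Upsilon^{(1)}_N(N)=R>0$ and $\Upsilon^{(2)}_N(N)=R+\bar R>0$; and if $P_{k+1}(N)\ge 0$, $P_{k+1}(N)+\bar P_{k+1}(N)\ge 0$, then \eqref{upsi1} and \eqref{upsi2} give $\Upsilon^{(1)}_k(N)\ge R>0$ and $\Upsilon^{(2)}_k(N)\ge R+\bar R>0$, so the step-$k$ quantities exist and your cost (or one-step dynamic-programming) argument may legitimately be invoked to conclude nonnegativity at step $k$. Note that the paper avoids the value-function interpretation altogether: it rewrites $P_k(N)$ and $P_k(N)+\bar P_k(N)$ in the completed-square (Lyapunov-type) forms \eqref{pnn1} and \eqref{pnn2}, in which every term is manifestly positive semi-definite given $Q\ge 0$, $Q+\bar Q\ge 0$, $R\ge 0$, $R+\bar R\ge 0$ and the zero terminal data, and concludes by plain induction. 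Your route gives a nice control-theoretic reading, but it only becomes a proof once the inductive positivity of $\Upsilon^{(1)}_k(N)$, $\Upsilon^{(2)}_k(N)$ is established alongside it rather than borrowed from results that logically depend on this lemma.
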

\begin{proof}
 See Appendix C.\end{proof}

\begin{lemma}\label{lemma3}
With the assumption $R>0$ and $R+\bar{R}>0$, \emph{Problem 1} admits a unique solution.
\end{lemma}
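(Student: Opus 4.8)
The goal is to show that under $R>0$ and $R+\bar R>0$, \emph{Problem 1} (the finite horizon problem with terminal weights set to zero) admits a unique solution. By Theorem \ref{main}, this is equivalent to showing that $\Upsilon_k^{(1)}(N)>0$ and $\Upsilon_k^{(2)}(N)>0$ for all $0\le k\le N$. The plan is to combine the nonnegativity of the Riccati iterates established in Lemma \ref{111} with the strict positivity of the control weights.

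First I would invoke Lemma \ref{111}: for every $N\ge 0$ and every $0\le k\le N$ we have $P_{k+1}(N)\ge 0$ and $P_{k+1}(N)+\bar P_{k+1}(N)\ge 0$. Then I would look at the defining expressions \eqref{upsi1} and \eqref{upsi2}. For $\Upsilon_k^{(1)}(N)=R+B'P_{k+1}(N)B+\sigma^2 D'P_{k+1}(N)D$, the two matrix terms $B'P_{k+1}(N)B$ and $\sigma^2 D'P_{k+1}(N)D$ are each positive semidefinite because $P_{k+1}(N)\ge 0$ and $\sigma^2\ge 0$; adding them to $R>0$ preserves strict positive definiteness, so $\Upsilon_k^{(1)}(N)>0$. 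For $\Upsilon_k^{(2)}(N)=R+\bar R+(B+\bar B)'(P_{k+1}(N)+\bar P_{k+1}(N))(B+\bar B)+\sigma^2(D+\bar D)'P_{k+1}(N)(D+\bar D)$, the first quadratic term is positive semidefinite since $P_{k+1}(N)+\bar P_{k+1}(N)\ge 0$, and the second is positive semidefinite since $P_{k+1}(N)\ge 0$; adding them to $R+\bar R>0$ again yields $\Upsilon_k^{(2)}(N)>0$. Hence both matrices are invertible and positive definite for all $k$.

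With $\Upsilon_k^{(1)}(N)>0$ and $\Upsilon_k^{(2)}(N)>0$ established, Theorem \ref{main} applies verbatim: the coupled Riccati equation \eqref{th41}--\eqref{th42} is well defined (the inverses exist), the feedback gains $K_k,\bar K_k$ in \eqref{kk}--\eqref{kkbar} are well defined, and \emph{Problem 1} has a unique optimal controller given by \eqref{th43}. This completes the argument.

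There is essentially no obstacle here beyond bookkeeping: the whole content is packaged into Lemma \ref{111} (whose proof is deferred to Appendix C) and Theorem \ref{main}. The only point requiring a word of care is that Lemma \ref{111} must be quoted at index $k+1$ rather than $k$, and that the terminal condition $P_{N+1}=\bar P_{N+1}=0$ used in this infinite-horizon setting indeed satisfies Assumption \ref{ass1}, so that both Lemma \ref{111} and Theorem \ref{main} are applicable; this is immediate. Thus the lemma follows directly.
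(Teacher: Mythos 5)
Your proposal is correct and follows essentially the same route as the paper: quote Lemma \ref{111} for $P_{k+1}(N)\geq 0$ and $P_{k+1}(N)+\bar{P}_{k+1}(N)\geq 0$, conclude from \eqref{upsi1}--\eqref{upsi2} together with $R>0$, $R+\bar{R}>0$ that $\Upsilon_{k}^{(1)}(N)>0$ and $\Upsilon_{k}^{(2)}(N)>0$, and then apply the sufficiency part of Theorem \ref{main}. Your version merely spells out the positive-semidefiniteness of the quadratic terms and the index bookkeeping, which the paper leaves implicit.
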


\begin{proof} From Lemma \ref{111}, we know that $P_{k}(N)\geq0$ and $P_{k}(N)+\bar{P}_{k}(N)\geq 0$. Besides, as $R>0$ and $R+\bar{R}>0$, from \eqref{upsi1} and \eqref{upsi2}, we know that $\Upsilon_{k}^{(1)}(N)>0$ and $\Upsilon_{k}^{(2)}(N)>0$ for $0\leq k\leq N$.  Apparently from Theorem \ref{main}, we can conclude that \emph{Problem 1} admits a unique solution for any $N>0$. This completes the proof.\end{proof}

\begin{lemma}\label{lemma2}
Under Assumptions \ref{ass2} and \ref{ass3}, for any $k\geq 0$, there exists a positive integer $N_{0}\geq 0$ such that $P_{k}(N_{0})>0$ and $P_{k}(N_{0})+\bar{P}_{k}(N_{0})>0$.
\end{lemma}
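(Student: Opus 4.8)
The plan is to reduce the claim to the case $k=0$ and then contradict exact observability. By time invariance of the coefficients, the pair $(P_k(N),\bar P_k(N))$ produced by the backward recursion \eqref{th41}--\eqref{th42} from the terminal value $(0,0)$ depends only on $N-k$, so $P_k(N)=P_0(N-k)$ and $\bar P_k(N)=\bar P_0(N-k)$; it therefore suffices to produce one $N_0$ with $P_0(N_0)>0$ and $P_0(N_0)+\bar P_0(N_0)>0$ and then use horizon $N_0+k$ for the given $k$. Throughout I will use that, by Lemma \ref{lemma3}, the finite‑horizon problem with zero terminal weight is uniquely solvable for every $N$, that by Theorem \ref{main} its optimal value is $J_N^\ast=E(x_0'P_0(N)x_0)+(Ex_0)'\bar P_0(N)(Ex_0)$ and is attained, and that $P_0(N)\ge 0$, $P_0(N)+\bar P_0(N)\ge 0$ by Lemma \ref{111}.

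First I would prove monotonicity in $N$: $P_0(N)\le P_0(N+1)$ and $P_0(N)+\bar P_0(N)\le P_0(N+1)+\bar P_0(N+1)$. Under Assumption \ref{ass2} every stage cost in \eqref{ps200} is nonnegative, so truncating to $[0,N]$ any admissible control for horizon $N+1$ cannot increase the cost; minimizing gives $J_{N+1}^\ast\ge J_N^\ast$ for every random $x_0$, and evaluating this at a deterministic $x_0=v$ and then at a zero‑mean $x_0=v\epsilon$ (with $E\epsilon^2=1$), together with Remark \ref{rem1}, yields the two matrix inequalities. Consequently $\ker P_0(N)$ and $\ker(P_0(N)+\bar P_0(N))$ are nonincreasing in $N$, the sets $\{N : P_0(N)>0\}$ and $\{N : P_0(N)+\bar P_0(N)>0\}$ are upward closed, and if no common $N_0$ existed then one of these two sets would be empty. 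It thus suffices to rule out the two cases ``$P_0(N)$ singular for all $N$'' and ``$P_0(N)+\bar P_0(N)$ singular for all $N$''; they are handled identically, the only difference being the choice of initial state.

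For the first case, since the kernels are nested and all nonzero, pick $0\ne v\in\bigcap_N\ker P_0(N)$ and take $x_0=v\epsilon$ with $\epsilon$ scalar, zero‑mean, $E\epsilon^2=1$, so $Ex_0=0$ and $E(x_0'x_0)=v'v>0$; then $J_N^\ast=v'P_0(N)v=0$ for every $N$. Because this value is attained and each stage cost of \eqref{ps200} is nonnegative, all stage costs vanish along the optimal trajectory. Splitting each term into its mean and its zero‑mean part, $R>0$ and $R+\bar R>0$ force $u_k^\ast\equiv 0$ on $[0,N]$, and $E(x_k'Qx_k)+(Ex_k)'\bar Q Ex_k=E(\mathbb X_k'\mathcal Q\mathbb X_k)=E(Y_k'Y_k)=0$ forces $Y_k=0$ on $[0,N]$. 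With $u_k^\ast\equiv 0$ the optimal closed loop on $[0,N]$ is exactly the trajectory of the uncontrolled system \eqref{mf} started at $x_0$; this trajectory does not depend on $N$, so $Y_k=0$ for all $k\ge 0$, and exact observability (Assumption \ref{ass3}) forces $x_0=0$, i.e.\ $v'v=0$ (Definition \ref{def}), a contradiction. Hence $P_0(N)>0$ for some $N$; the same argument with a deterministic initial state $x_0=v$ gives $P_0(N)+\bar P_0(N)>0$ for some $N$; by the monotonicity above a common $N_0$ works, and the lemma follows for every $k$.

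I expect the main obstacle to be bookkeeping rather than a deep idea: making the monotonicity and nested‑kernel step precise enough to extract the common null vector, and identifying the zero‑cost optimal closed loop with a genuine trajectory of \eqref{mf} so that Definition \ref{ob1} applies verbatim (including the passage from ``$Y_k=0$ on every $[0,N]$'' to ``$Y_k=0$ for all $k\ge 0$'').
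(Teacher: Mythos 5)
Your proposal is correct and follows essentially the same route as the paper's Appendix D: time-shift to $k=0$, monotonicity of $P_0(N)$ and $P_0(N)+\bar P_0(N)$ via the optimal cost $J_N^*$, extraction of a common nonzero null vector from the nested null spaces, and a zero-cost argument forcing $u_k^*\equiv 0$, $Y_k=0$, hence $x_0=0$ by exact observability, contradicting $x_0\neq 0$. Your bookkeeping is in fact slightly tidier than the paper's (matrix kernels and upward-closedness of $\{N: P_0(N)>0\}$ in place of the paper's dimension argument on the sets $X_N^{(1)},X_N^{(2)}$ of random vectors, and the explicit passage from $Y_k=0$ on every $[0,N]$ to all $k\geq 0$), but the substance is the same.
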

\begin{proof}
 See Appendix D.\end{proof}

\begin{theorem}\label{theorem2}
Under Assumptions \ref{ass2} and \ref{ass3}, if system \eqref{ps10} is stabilizable in the mean square sense, the following assertions hold:

1) For any $k\geq 0$, $P_{k}(N)$ and $\bar{P}_{k}(N)$ are convergent, i.e.,
$$\lim_{N\rightarrow +\infty}P_{k}(N)=P,~\lim_{N\rightarrow +\infty}\bar{P}_{k}(N)=\bar{P},$$
where $P$ and $\bar{P}$ satisfy the following coupled ARE:
\begin{align}
  P&=Q+A'PA+\sigma^{2}C'PC\hspace{-1mm}-\hspace{-1mm}[M^{(1)}]'[\Upsilon^{(1)}]^{-1}M^{(1)},\label{are1}\\
  \bar{P}&=\bar{Q}+A'P\bar{A}+\sigma^{2}C'P\bar{C}+\bar{A}'PA+\sigma^{2}\bar{C}'PC\notag\\
  &+\bar{A}'P\bar{A}+\sigma^{2}\bar{C}'P\bar{C}+(A+\bar{A})'\bar{P}(A+\bar{A})\notag\\
  &+[M^{(1)}]'[\Upsilon^{(1)}]^{-1}M^{(1)}-[M^{(2)}]'[\Upsilon^{(2)}]^{-1}M^{(2)},\label{are2}
\end{align}
while
\begin{align}
\Upsilon^{(1)}&=R+B'PB+\sigma^{2}D'PD\geq R>0,\label{up1}\\
M^{(1)}&=B'PA+\sigma^{2}D'PC,\label{hh1}\\
\Upsilon^{(2)}&=R+\bar{R}+(B+\bar{B})'(P+\bar{P})(B+\bar{B})\notag\\
&~~~~~~+\sigma^{2}(D+\bar{D})'P(D+\bar{D})\geq R+\bar{R}>0,\label{up2}\\
M^{(2)}&=(B+\bar{B})'(P+\bar{P})(A+\bar{A})\notag\\
&~~~~~~+\sigma^{2}(D+\bar{D})'P(C+\bar{C}).\label{hh2}
\end{align}

2) $P$ and $P+\bar{P}$ are positive definite.
\end{theorem}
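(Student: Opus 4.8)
The plan is to exploit the time-invariance of \eqref{ps10} to convert the backward recursion \eqref{th41}--\eqref{th42} into a forward iteration in the horizon length, and then run the classical "monotone $+$ bounded $\Rightarrow$ convergent" Riccati argument, with Lemmas \ref{111} and \ref{lemma2} supplying the missing pieces. First I would note that, since the coefficients and the weights are time invariant and the terminal weights are set to zero, the solution of \eqref{th41}--\eqref{th42} depends only on the number of remaining steps, i.e. $P_k(N)=P_0(N-k)$ and $\bar P_k(N)=\bar P_0(N-k)$, so it suffices to analyze $P_0(N),\bar P_0(N)$ as $N\to\infty$, and $P_0(N)=\Phi_1(P_0(N-1),\bar P_0(N-1))$, $\bar P_0(N)=\Phi_2(P_0(N-1),\bar P_0(N-1))$ where $\Phi_1,\Phi_2$ denote the right-hand sides of \eqref{th41}--\eqref{th42}. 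Monotonicity then follows from the optimal-cost formula \eqref{jnst}: for the horizon-$(N{+}1)$ problem with zero terminal cost, every admissible $u$ satisfies $J_{N+1}(u)\ge J_N(u)\ge J_N^{*}$ because the extra stage cost is nonnegative under Assumption \ref{ass2} (via Lemma \ref{lemma01}), and evaluating at the optimal control for horizon $N{+}1$ gives $J_{N+1}^{*}\ge J_N^{*}$, i.e. $E(x_0'P_0(N{+}1)x_0)+(Ex_0)'\bar P_0(N{+}1)Ex_0\ge E(x_0'P_0(N)x_0)+(Ex_0)'\bar P_0(N)Ex_0$ for all $x_0$. Specializing $x_0$ deterministic (Remark \ref{rem1}, part 1) gives $P_0(N)+\bar P_0(N)\le P_0(N{+}1)+\bar P_0(N{+}1)$, and specializing to $x_0$ with $Ex_0=0$ (Remark \ref{rem1}, part 2) gives $P_0(N)\le P_0(N{+}1)$; by Lemma \ref{111} both nondecreasing sequences are bounded below by $0$.

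Next I would produce a uniform upper bound. Using mean-square stabilizability of \eqref{ps10}, fix a feedback $u_k=Lx_k+\bar L Ex_k$ making the closed loop asymptotically mean square stable; since the closed-loop second moments obey a linear time-invariant recursion, $E(x_k'x_k)\to0$ forces geometric decay, so the cost $J(u)$ in \eqref{ps200} is finite and, as a quadratic form, is dominated by $c\,E(x_0'x_0)$ for some $c$ independent of $N$. Because $J_N^{*}\le J(u)$, we obtain $E(x_0'P_0(N)x_0)+(Ex_0)'\bar P_0(N)Ex_0\le c\,E(x_0'x_0)$ uniformly in $N$, and specializing $x_0$ as above bounds $P_0(N)$ and $P_0(N)+\bar P_0(N)$ from above uniformly. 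Monotone bounded sequences of symmetric matrices converge, so $P_0(N)\to P\ge0$ and $P_0(N)+\bar P_0(N)\to S\ge0$, hence $\bar P_0(N)\to\bar P:=S-P$ with $P+\bar P=S\ge0$. Then I would pass to the limit in $P_0(N)=\Phi_1(P_0(N{-}1),\bar P_0(N{-}1))$ and $\bar P_0(N)=\Phi_2(P_0(N{-}1),\bar P_0(N{-}1))$: since $P_0(N{-}1)\ge0$ and $P_0(N{-}1)+\bar P_0(N{-}1)\ge0$, the matrices $\Upsilon^{(1)}_0(N)\ge R>0$ and $\Upsilon^{(2)}_0(N)\ge R+\bar R>0$ are uniformly invertible, so $\Phi_1,\Phi_2$ are continuous along the sequence and the limit satisfies the coupled ARE \eqref{are1}--\eqref{are2}; the same inequalities survive in the limit, giving \eqref{up1} and \eqref{up2}. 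This establishes assertion 1). For assertion 2), I would apply Lemma \ref{lemma2} with $k=0$ to get $N_0$ with $P_0(N_0)>0$ and $P_0(N_0)+\bar P_0(N_0)>0$, and then invoke the monotonicity from the first step: $P=\lim_N P_0(N)\ge P_0(N_0)>0$ and $P+\bar P=S\ge P_0(N_0)+\bar P_0(N_0)>0$.

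I expect the main obstacle to be the uniform upper bound: one must justify that mean-square stabilizability genuinely yields a controller with \emph{finite} infinite-horizon cost, i.e. that $E(x_k'x_k)\to0$ for the closed loop implies geometric decay and hence summability of the running cost — this rests on the linearity of the second-moment dynamics of a mean-field system (equivalently, that asymptotic mean-square stability of a linear mean-field recursion is exponential). A secondary care point is the entrywise continuity / convergence of the matrix inverses $[\Upsilon^{(1)}_0(N)]^{-1}$, $[\Upsilon^{(2)}_0(N)]^{-1}$, which is handled by the uniform lower bounds $\Upsilon^{(1)}_0(N)\ge R>0$, $\Upsilon^{(2)}_0(N)\ge R+\bar R>0$ coming from Lemma \ref{111}. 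Everything else is the standard monotone-convergence Riccati argument, here adapted to the coupled pair $(P,\bar P)$.
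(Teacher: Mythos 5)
Your proposal is correct and follows essentially the same route as the paper: monotonicity of $P_0(N)$ and $P_0(N)+\bar P_0(N)$ from the optimal-cost comparison with specialized initial states, a uniform upper bound obtained by evaluating the infinite-horizon cost of a stabilizing feedback $u_k=Lx_k+\bar L Ex_k$ (the summability/geometric-decay point you flag is exactly what the paper settles by the linear second-moment recursion and a citation to Ait Rami et al.), monotone-bounded convergence plus uniform invertibility of $\Upsilon^{(1)}_k(N),\Upsilon^{(2)}_k(N)$ to pass to the limit in the coupled Riccati recursion, and Lemma \ref{lemma2} for strict positivity of $P$ and $P+\bar P$. No substantive differences from the paper's argument.
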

\begin{proof}
 See Appendix E.\end{proof}

We are now in the position to present the main results of this section. Two results are to be given, one is based on the assumption of exactly observability (Assumption \ref{ass3}), and the other is based on a weaker assumption of exactly detectability (Assumption \ref{ass4}).
\begin{theorem}\label{succeed}
Under Assumption \ref{ass2} and \ref{ass3},  mean-field system \eqref{ps10} is stabilizable in the mean square sense if and only if there exists a unique solution to coupled ARE \eqref{are1}-\eqref{are2} $P$ and $\bar{P}$ satisfying $P>0$ and $P+\bar{P}>0$.

In this case, the stabilizable controller is given by
\begin{align}\label{control}
  u_{k}=Kx_{k}+\bar{K}Ex_{k},
\end{align}
where
\begin{align}
K&=-[\Upsilon^{(1)}]^{-1}M^{(1)},\label{K}\\
\bar{K}&=-\{[\Upsilon^{(2)}]^{-1}M^{(2)}-[\Upsilon^{(1)}]^{-1}M^{(1)}\},\label{KK}
\end{align}
$\Upsilon^{(1)}$, $M^{(1)}$, $\Upsilon^{(2)}$ and $M^{(2)}$ are given by \eqref{up1}-\eqref{hh2}.

Moreover, the stabilizable controller $u_{k}$ minimizes the cost function \eqref{ps200}, and the optimal cost function is given by
\begin{equation}\label{cost}
  J^{*}=E(x_{0}'Px_{0})+Ex_{0}'\bar{P}Ex_{0}.
\end{equation}
\end{theorem}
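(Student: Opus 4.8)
The plan is to derive everything from a single completion‑of‑squares Lyapunov identity: I would prove the sufficiency direction first, then read off uniqueness from it, while existence together with the positivity on the necessity side is already delivered by Theorem \ref{theorem2}.

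\textbf{The Lyapunov identity.} Let $(P,\bar{P})$ be any solution of the coupled ARE \eqref{are1}--\eqref{are2} and set $V_{k}=E(x_{k}'Px_{k})+(Ex_{k})'\bar{P}(Ex_{k})$ (equivalently $E[(x_{k}-Ex_{k})'P(x_{k}-Ex_{k})]+(Ex_{k})'(P+\bar{P})(Ex_{k})$, which makes its nonnegativity under $P\geq0$, $P+\bar{P}\geq0$ transparent). Substituting the dynamics \eqref{ps10} and its mean \eqref{ps20} into $EV_{k+1}$, splitting every quadratic form into its $Ex_{k}$-part and its zero-mean part (as in Lemma \ref{lemma01} and Remark \ref{rem1}), and using \eqref{are1}--\eqref{are2} with the definitions \eqref{up1}--\eqref{hh2} and \eqref{K}--\eqref{KK}, I expect to obtain, for every admissible $u_{k}$,
\begin{align*}
V_{k}-EV_{k+1}&=E\big[x_{k}'Qx_{k}+(Ex_{k})'\bar{Q}Ex_{k}+u_{k}'Ru_{k}+(Eu_{k})'\bar{R}Eu_{k}\big]-\Delta_{k},\\
\Delta_{k}&=E\big[(u_{k}-Eu_{k}-K(x_{k}-Ex_{k}))'\Upsilon^{(1)}(u_{k}-Eu_{k}-K(x_{k}-Ex_{k}))\big]\\
&\quad+(Eu_{k}-(K+\bar{K})Ex_{k})'\Upsilon^{(2)}(Eu_{k}-(K+\bar{K})Ex_{k}),
\end{align*}
where $K+\bar{K}=-[\Upsilon^{(2)}]^{-1}M^{(2)}$ by \eqref{K}--\eqref{KK}. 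Since $\Upsilon^{(1)}\geq R>0$ and $\Upsilon^{(2)}\geq R+\bar{R}>0$ by \eqref{up1},\eqref{up2}, we have $\Delta_{k}\geq0$, with $\Delta_{k}=0$ exactly when $u_{k}=Kx_{k}+\bar{K}Ex_{k}$.

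\textbf{Sufficiency.} Assume \eqref{are1}--\eqref{are2} has a solution $(P,\bar{P})$ with $P>0$, $P+\bar{P}>0$. Along the closed loop under \eqref{control}, $\Delta_{k}=0$, so $V_{k}-V_{k+1}$ equals the running cost, which is nonnegative by Assumption \ref{ass2}; hence $\{V_{k}\}$ is nonincreasing and bounded below by $0$, therefore convergent, and $\sum_{k\geq0}(V_{k}-V_{k+1})<\infty$. Thus the running cost tends to $0$; as $R>0$ and $R+\bar{R}>0$ this forces $Eu_{k}\to0$ and $E[(u_{k}-Eu_{k})'(u_{k}-Eu_{k})]\to0$, while $E[x_{k}'Qx_{k}+(Ex_{k})'\bar{Q}Ex_{k}]=E(Y_{k}'Y_{k})\to0$ in the notation of Definition \ref{ob1}. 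Exact observability (Assumption \ref{ass3}), in its asymptotic input-perturbed form, then yields $E(x_{k}'x_{k})\to0$, so \eqref{control} is stabilizing; consequently $EV_{N+1}\to0$, and summing the identity from $0$ to $N$ gives $J=V_{0}=E(x_{0}'Px_{0})+(Ex_{0})'\bar{P}(Ex_{0})$, i.e.\ \eqref{cost}. For any other stabilizing controller $EV_{N+1}\to0$ as well, and summing yields $J=E(x_{0}'Px_{0})+(Ex_{0})'\bar{P}(Ex_{0})+\sum_{k\geq0}\Delta_{k}\geq E(x_{0}'Px_{0})+(Ex_{0})'\bar{P}(Ex_{0})$, with equality precisely for \eqref{control}; hence \eqref{control} solves \emph{Problem \ref{prob2}}. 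For necessity and uniqueness: if \eqref{ps10} is stabilizable in mean square, Theorem \ref{theorem2} gives a solution $(P,\bar{P})$ of \eqref{are1}--\eqref{are2} with $P>0$, $P+\bar{P}>0$, settling existence; and if $(\hat{P},\hat{\bar{P}})$ is any solution with $\hat{P}>0$, $\hat{P}+\hat{\bar{P}}>0$, the sufficiency argument applied verbatim to it shows its feedback is stabilizing and optimal, so $E(x_{0}'\hat{P}x_{0})+(Ex_{0})'\hat{\bar{P}}(Ex_{0})$ equals the optimal cost and hence equals $E(x_{0}'Px_{0})+(Ex_{0})'\bar{P}(Ex_{0})$ for all $x_{0}$; taking $x_{0}$ deterministic gives $\hat{P}+\hat{\bar{P}}=P+\bar{P}$ and taking $x_{0}$ zero-mean gives $\hat{P}=P$ via Lemma \ref{lemma01}, whence $\hat{\bar{P}}=\bar{P}$.

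\textbf{Main obstacle.} The delicate step is the passage ``running cost $\to0$ $\Rightarrow$ $E(x_{k}'x_{k})\to0$''. Definition \ref{ob1} only supplies the zero-input, zero-output implication $Y_{k}\equiv0\Rightarrow x_{0}=0$, so I would first need an auxiliary lemma strengthening it to an asymptotic, input-perturbed statement (a trajectory with $E(Y_{k}'Y_{k})\to0$ and $u_{k}\to0$ in mean square has $E(x_{k}'x_{k})\to0$); this is exactly what distinguishes the observability setting from the detectability setting handled afterward. The second, more routine difficulty is verifying the Lyapunov identity itself: every quadratic form must be decomposed consistently into its mean and zero-mean components before the positivity of $\Upsilon^{(1)}$ and $\Upsilon^{(2)}$ can be invoked, and that bookkeeping must be matched carefully against the two coupled equations \eqref{are1}--\eqref{are2}.
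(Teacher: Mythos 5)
Your architecture coincides with the paper's: the same completion-of-squares Lyapunov identity built from $V_k=E(x_k'Px_k)+(Ex_k)'\bar P Ex_k$ and the coupled ARE, existence of a positive solution from Theorem \ref{theorem2}, optimality and the value \eqref{cost} from summing the identity, and uniqueness by equating the two optimal-cost expressions and separating the zero-mean and deterministic initial states via Lemma \ref{lemma01}. The one place where you genuinely diverge is exactly the step you flag as the ``main obstacle,'' and as written it is a gap, not a proof: from ``running cost $\to 0$'' you invoke exact observability ``in its asymptotic input-perturbed form'' to conclude $E(x_k'x_k)\to 0$. Definition \ref{ob1} is only the algebraic implication $Y_k=0,\ 0\le k\le N \Rightarrow x_0=0$ for the unforced system; upgrading it to ``$E(Y_k'Y_k)\to0$ and $u_k\to0$ in mean square imply $E(x_k'x_k)\to0$'' requires a uniform Gramian-type lower bound for the mean-field dynamics (with multiplicative noise and the mean/zero-mean splitting) plus an input-perturbation argument over a finite window, none of which you supply. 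Since the whole point of the sufficiency direction is precisely this passage, the proposal is incomplete at its critical step.

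The paper closes this gap differently, and with tools you already have on hand: it does not argue pointwise from the running cost, but bounds the window sum $\sum_{k=l}^{l+N_0}E[x_k'Qx_k+(Ex_k)'\bar QEx_k+u_k'Ru_k+(Eu_k)'\bar REu_k]$ from below by the finite-horizon optimal cost started at $x_l$, namely $E[(x_l-Ex_l)'P_0(N_0)(x_l-Ex_l)]+(Ex_l)'[P_0(N_0)+\bar P_0(N_0)]Ex_l$, using Theorem \ref{main} with zero terminal weight and time invariance. Convergence of $V_k$ forces these window sums to vanish as $l\to\infty$, and Lemma \ref{lemma2} (which is where exact observability is actually consumed, yielding $P_0(N_0)>0$ and $P_0(N_0)+\bar P_0(N_0)>0$) then gives $E(x_l'x_l)\to0$ directly. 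If you replace your unproved asymptotic-observability lemma by this finite-horizon lower bound together with Lemma \ref{lemma2}, the rest of your argument (stabilization, $V_{N+1}\to0$, the cost identity $J=V_0+\sum_k\Delta_k$, optimality of \eqref{control}, and uniqueness) goes through as you describe and matches the paper's proof.
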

\begin{proof}
 See Appendix F.\end{proof}

\begin{theorem}\label{succeed2}
Under Assumption \ref{ass2} and \ref{ass4}, mean-field system \eqref{ps10} is stabilizable in the mean square sense if and only if there exists a unique solution to coupled ARE \eqref{are1}-\eqref{are2} $P$ and $\bar{P}$ satisfying $P\geq 0$ and $P+\bar{P}\geq 0$.

In this case, the stabilizable controller is given by \eqref{control}. Moreover, the stabilizable controller $u_{k}$ minimizes the cost function \eqref{ps200}, and the optimal cost function is as \eqref{cost}.
\end{theorem}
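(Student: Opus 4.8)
The plan is to mirror the proof of Theorem~\ref{succeed} (Appendix F), changing it at exactly two points: the strict definiteness $P>0,\ P+\bar P>0$ is relaxed to $P\ge0,\ P+\bar P\ge0$, and exact observability is replaced by exact detectability. I would organize the argument as (i)~necessity of the ARE solution, (ii)~sufficiency, i.e.\ that \eqref{control} stabilizes \eqref{ps10}, (iii)~optimality of \eqref{control} with cost \eqref{cost}, and (iv)~uniqueness of the semidefinite solution.

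\emph{Necessity.} I would use a vanishing perturbation to reduce to the observable case already settled in Theorem~\ref{succeed}. For $\varepsilon>0$, replace $Q$ by $Q+\varepsilon I_n$ and keep $\bar Q$; then $Q+\varepsilon I_n>0$ and $(Q+\varepsilon I_n)+\bar Q>0$, so Assumption~\ref{ass2} holds strictly and $(A,\bar A,C,\bar C,(\mathcal Q^{\varepsilon})^{1/2})$ is exactly observable (its output can vanish only if $\mathbb X_k=0$, already at $k=0$). If \eqref{ps10} is mean-square stabilizable, a stabilizing linear feedback gives geometric decay of $E(x_k'x_k)$, hence finite perturbed cost, so the perturbed problem is stabilizable; Theorem~\ref{succeed} then yields a unique $P^{\varepsilon}>0,\ P^{\varepsilon}+\bar P^{\varepsilon}>0$ solving the perturbed ARE. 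Because the perturbed cost is monotone in $\varepsilon$, so are $P^{\varepsilon}$ (choose $x_0$ zero-mean) and $P^{\varepsilon}+\bar P^{\varepsilon}$ (choose $x_0$ deterministic); both are bounded below by $0$ (Lemma~\ref{111}), hence converge as $\varepsilon\downarrow0$ to $P\ge0$ and $P+\bar P\ge0$. Since the $\Upsilon$-matrices of the perturbed ARE stay bounded below by $R>0$ and $R+\bar R>0$ uniformly in $\varepsilon$, their inverses converge, and passing to the limit in the perturbed ARE shows that $(P,\bar P)$ solves \eqref{are1}-\eqref{are2}.

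\emph{Sufficiency.} Let $(P,\bar P)$ solve \eqref{are1}-\eqref{are2} with $P\ge0,\ P+\bar P\ge0$; apply \eqref{control} and put $V_k:=E(x_k'Px_k)+(Ex_k)'\bar P(Ex_k)$. The completion-of-squares identity induced by the ARE gives, along the closed loop, $V_k-V_{k+1}=E\big[x_k'Qx_k+(Ex_k)'\bar QEx_k+u_k'Ru_k+(Eu_k)'\bar REu_k\big]\ge0$, so $\{V_k\}$ is nonincreasing and nonnegative and $\sum_{k\ge0}E\big[x_k'Qx_k+(Ex_k)'\bar QEx_k+u_k'Ru_k+(Eu_k)'\bar REu_k\big]=V_0-\lim_kV_k<\infty$. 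Splitting into mean and zero-mean components and using $R>0,\ R+\bar R>0,\ Q\ge0,\ Q+\bar Q\ge0$, this gives $\sum_kE(u_k'u_k)<\infty$ and $\sum_kE(Y_k'Y_k)=\sum_kE(\mathbb X_k'\mathcal Q\mathbb X_k)<\infty$ for the closed-loop process, where $Y_k=\mathcal Q^{1/2}\mathbb X_k$. It then remains to conclude $\lim_kE(x_k'x_k)=0$ from the summability of the output energy $E(Y_k'Y_k)$ and of the control energy $E(u_k'u_k)$; this is precisely where exact detectability is used, via the characterization of Definition~\ref{det1} established in \cite{huang},\cite{zhangw},\cite{zhangw2} (a detectable system with summable output energy, driven by a summable perturbation, has vanishing state in mean square). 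Transferring the ``output identically zero'' form of detectability for the open-loop, unforced system into this asymptotic statement for the forced closed loop is the step I expect to be the main obstacle.

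\emph{Optimality and uniqueness.} The same identity gives, for any $\mathcal F_{k-1}$-measurable stabilizing control, $J=E(x_0'Px_0)+(Ex_0)'\bar P(Ex_0)+\sum_{k\ge0}E\big[(u_k-Kx_k-\bar KEx_k)'\Theta_k(u_k-Kx_k-\bar KEx_k)\big]$, where the nonnegative quadratic weights $\Theta_k\ge0$ are built from $\Upsilon^{(1)},\Upsilon^{(2)}$ (the zero-mean and mean deviations weighted respectively); the telescoping is legitimate because stabilizability forces $V_k\to0$ and the sums converge geometrically. Hence $J\ge E(x_0'Px_0)+(Ex_0)'\bar P(Ex_0)$ with equality exactly for \eqref{control}, which proves optimality and \eqref{cost}. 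For uniqueness, if $(P,\bar P)$ and $(\tilde P,\tilde{\bar P})$ are two solutions with $P\ge0,\ P+\bar P\ge0$ and $\tilde P\ge0,\ \tilde P+\tilde{\bar P}\ge0$, then by (ii) each induces a stabilizing feedback, and by the identity above both $E(x_0'Px_0)+(Ex_0)'\bar PEx_0$ and $E(x_0'\tilde Px_0)+(Ex_0)'\tilde{\bar P}Ex_0$ equal the (unique) optimal cost for every $x_0$; taking $x_0$ zero-mean gives $P=\tilde P$, and $x_0$ deterministic gives $P+\bar P=\tilde P+\tilde{\bar P}$, hence $\bar P=\tilde{\bar P}$.
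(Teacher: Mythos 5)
Your overall architecture (existence, stabilization, optimality, uniqueness-via-optimal-cost) matches the paper's, and your $\varepsilon$-perturbation route to existence in the necessity part is a workable alternative to the paper's more direct argument (the paper simply reuses the monotonicity of $P_{0}(N)$ and $P_{0}(N)+\bar{P}_{0}(N)$ in $N$ together with the boundedness supplied by a stabilizing feedback, neither of which needs observability, and then passes to the limit in \eqref{th41}--\eqref{th42}). The problem is the sufficiency direction, and you have flagged it yourself: you reduce everything to the claim that ``a detectable system with summable output energy, driven by a summable perturbation, has vanishing state in mean square,'' and you cite no proof of it. Under Definition \ref{det1}, exact detectability only says that if the output of the \emph{unforced} system \eqref{mf} is identically zero on every finite horizon then $E(x_{k}'x_{k})\to 0$; it gives you no handle on closed-loop trajectories whose output energy is merely square-summable. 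Bridging exactly this gap is the entire content of the paper's Appendix G: the ARE is rewritten in closed-loop Lyapunov form \eqref{ly01}--\eqref{ly2}, exact detectability of $(A,\bar{A},C,\bar{C},\mathcal{Q}^{1/2})$ is transferred to the augmented closed-loop triple $(\tilde{\mathbb{A}},\tilde{\mathbb{C}},\tilde{\mathcal{Q}}^{1/2})$ using results of \cite{zhangw}, the unobservable states of that triple are characterized as exactly those $\mathbb{X}_{0}$ with $E(\mathbb{X}_{0}'\mathbb{P}\mathbb{X}_{0})=0$, and then an orthogonal decomposition along $\ker\mathbb{P}$ yields a block-triangular system \eqref{lly1}--\eqref{lly2} in which the $\mathbb{P}_{2}$-block is exactly observable (hence stable by Theorem \ref{succeed}), the kernel block is handled directly by detectability (its output is identically zero), and the two are glued by the cascade-stability estimate \eqref{xu} quoted from \cite{abb}. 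None of this is replaced by anything in your write-up, so as it stands the ``if'' half of the theorem is not proved.

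A secondary consequence: your uniqueness argument (iv) presupposes that \emph{every} semidefinite solution of \eqref{are1}--\eqref{are2} induces a stabilizing feedback, i.e.\ it leans on the unproved sufficiency step, so it inherits the same gap. Minor points: in (iii) the telescoping needs only $V_{k}\to 0$, which follows from $E(x_{k}'x_{k})\to 0$ and boundedness of $P,\bar{P}$ — the appeal to geometric convergence is unnecessary; and in the necessity part the finiteness of the cost under a stabilizing feedback is exactly the estimate \eqref{cnc}--\eqref{jjj} via \cite{rami}, which you may cite rather than reprove.
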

\begin{proof}
 See Appendix G.\end{proof}

\begin{remark}
Theorem \ref{succeed} and \ref{succeed2} propose a new approach to stochastic control problems  based on the maximum principle and solution to FBSDE developed in this paper, and thus essentially solve the optimal control and stabilization for mean-field stochastic systems under more standard assumptions which is compared with previous works \cite{ni1} and \cite{ni2}.
\end{remark}

%\begin{remark}\label{rem5}
%Theorem \ref{succeed} and Theorem \ref{succeed2} provide the sufficient and necessary condition for the stabilization of mean-field systems %which is new to the best of our knowledge. The presented results are parallel to the results obtained for standard stochastic control, %readers may refer to \cite{huang}, \cite{zhangw}, etc.
%\end{remark}

\section{Numerical Examples}
\subsection{The Finite Horizon Case}
Consider system \eqref{ps1} and the cost function \eqref{ps2} with $N=4$ and $\sigma^{2}=1$, we choose the coefficient matrices and weighting matrices in \eqref{ps1} and \eqref{ps2} to be time-invariant for $k=1,2,3$ as:
\begin{align*}
A_{k}&\hspace{-1mm}=\hspace{-1mm}\left[\hspace{-2mm}
  \begin{array}{ccc}
   1.1\hspace{-2mm}&\hspace{-2mm} 0.9\hspace{-2mm}&\hspace{-2mm} 0.8\\
    0 \hspace{-2mm} &\hspace{-2mm}0.6 \hspace{-2mm}&\hspace{-2mm}  1.2\\
    0.4  \hspace{-2mm}&\hspace{-2mm}0.9 \hspace{-2mm}&\hspace{-2mm}  1\\
  \end{array}
\hspace{-2mm}\right],\bar{A}_{k}\hspace{-1mm}=\hspace{-1mm}\left[\hspace{-2mm}
  \begin{array}{ccc}
    0.5\hspace{-1mm}&\hspace{-1mm} 1\hspace{-1mm}&\hspace{-1mm} 0.9\\
    0.8 \hspace{-1mm}&\hspace{-1mm}0.7 \hspace{-1mm}&\hspace{-1mm}  1.2\\
     1.1  \hspace{-1mm}&\hspace{-1mm}2 \hspace{-1mm}&\hspace{-1mm}  1.9\\
  \end{array}
\hspace{-2mm}\right],B_{k}\hspace{-1mm}=\hspace{-1mm}\left[\hspace{-2mm}
  \begin{array}{ccc}
    2\hspace{-1mm}&\hspace{-1mm} 0.3\\
    1.1 \hspace{-1mm} &\hspace{-1mm}0.6    \\
    0.9  \hspace{-1mm}&\hspace{-1mm}1.3\\
  \end{array}
\hspace{-2mm}\right],\\
\bar{B}_{k}&\hspace{-1mm}=\hspace{-1mm}\left[\hspace{-2mm}
  \begin{array}{ccc}
    1.2\hspace{-2mm}&\hspace{-2mm} 0.6\\
    0.9  \hspace{-2mm}&\hspace{-2mm}1    \\
    0  \hspace{-2mm}&\hspace{-2mm}0.8\\
  \end{array}
\hspace{-2mm}\right],C_{k}\hspace{-1mm}=\hspace{-1mm}\left[\hspace{-2mm}
  \begin{array}{ccc}
   0.8\hspace{-2mm}&\hspace{-2mm}0.9\hspace{-2mm}&\hspace{-2mm} 1.5\\
   1.2  \hspace{-2mm}&\hspace{-2mm}1 \hspace{-2mm}&\hspace{-2mm}  0.8\\
    0  \hspace{-2mm}&\hspace{-2mm}0.6\hspace{-2mm}&\hspace{-2mm}  0.4\\
  \end{array}
\hspace{-2mm}\right],\bar{C}_{k}\hspace{-1mm}=\hspace{-1mm}\left[\hspace{-2mm}
  \begin{array}{ccc}
   1\hspace{-2mm}&\hspace{-2mm}0\hspace{-2mm}&\hspace{-2mm}0.3\\
    0.5  \hspace{-2mm}&\hspace{-2mm}0.6 \hspace{-2mm}&\hspace{-2mm}  0.9\\
     0.7  \hspace{-2mm}&\hspace{-2mm}1.2 \hspace{-2mm}&\hspace{-2mm} 0.8\\
  \end{array}
\hspace{-1mm}\right],\\
D_{k}&=\left[\hspace{-2mm}
  \begin{array}{ccc}
    0.5\hspace{-2mm}&\hspace{-2mm}0.4\\
    2  \hspace{-2mm}&\hspace{-2mm}0.9  \\
    1\hspace{-2mm}&\hspace{-2mm}0\\
  \end{array}
\right],\bar{D}_{k}=\left[\hspace{-2mm}
  \begin{array}{ccc}
    2\hspace{-2mm}&\hspace{-2mm} 1\\
   0.5 \hspace{-2mm}&\hspace{-2mm}0.8    \\
   0 \hspace{-2mm}&\hspace{-2mm}0.5\\
  \end{array}
\right],\\
Q_{k}&=diag([0,~2,~1]),\bar{Q}_{k}=diag([1,~-1,~0]),\\
R_{k}&=diag([0,~2]),\bar{R}_{k}=diag([1,~-2]),\\
P_{4}&=diag([1,~2,~0]),\bar{P}_{4}=diag([1,~-1,~1]).
\end{align*}
It is noted that $R_{k}$ and $R_{k}+\bar{R}_{k}$ are semi-positive definite, while not positive definite  for $k=1,2,3$.

Based on \eqref{th43}-\eqref{th4} of Theorem \ref{main}, the solution to coupled Riccati equation \eqref{th41}-\eqref{th42} can be given as:
\begin{align*}
P_{3}&\hspace{-1mm}=\hspace{-1mm}\left[\hspace{-2mm}
  \begin{array}{ccc}
    0.995\hspace{-2mm}&\hspace{-2mm} 0.298\hspace{-2mm}&\hspace{-2mm} -0.115\\
   0.298\hspace{-2mm}&\hspace{-2mm}2.417\hspace{-2mm}& \hspace{-2mm}0.840\\
   -0.115\hspace{-2mm}&\hspace{-2mm}0.840\hspace{-2mm}&\hspace{-2mm}  3.360\\
  \end{array}
\hspace{-2mm}\right]\hspace{-1mm},\bar{P}_{3}\hspace{-1mm}=\hspace{-1mm}\left[\hspace{-2mm}
  \begin{array}{ccc}
    0.667\hspace{-2mm}& \hspace{-2mm}0.074\hspace{-2mm}&\hspace{-2mm} -0.006\hspace{-2mm}\\
    0.074 \hspace{-2mm}&\hspace{-2mm}1.033\hspace{-2mm} &\hspace{-2mm}  0.133\hspace{-2mm}\\
    -0.006 \hspace{-2mm} &\hspace{-2mm}0.133\hspace{-2mm} &\hspace{-2mm} -1.319\hspace{-2mm}\\
  \end{array}
\hspace{-2mm}\right]\hspace{-1mm},\\
P_{2}&\hspace{-1mm}=\hspace{-1mm}\left[\hspace{-2mm}
  \begin{array}{ccc}
   1.658\hspace{-2mm}&\hspace{-2mm} 0.161\hspace{-2mm}&\hspace{-2mm} 0.024\\
   0.161 \hspace{-2mm}&\hspace{-2mm}2.547 \hspace{-2mm}&\hspace{-2mm}  0.839\\
   0.024 \hspace{-2mm}&\hspace{-2mm}0.839 \hspace{-2mm}&\hspace{-2mm}  3.379\\
  \end{array}
\hspace{-2mm}\right],\bar{P}_{2}\hspace{-1mm}=\hspace{-1mm}\left[\hspace{-2mm}
  \begin{array}{ccc}
   0.630\hspace{-2mm}&\hspace{-2mm} 0.919\hspace{-2mm}&\hspace{-2mm} -0.457\\
   0.919 \hspace{-2mm}&\hspace{-2mm}5.282\hspace{-2mm} &\hspace{-2mm} 1.439\\
    -0.457 \hspace{-2mm} &\hspace{-2mm}1.439\hspace{-2mm} &\hspace{-2mm}  -0.520\\
  \end{array}
\hspace{-2mm}\right],\\
P_{1}&\hspace{-1mm}=\hspace{-1mm}\left[\hspace{-2mm}
  \begin{array}{ccc}
   1.907\hspace{-2mm}&\hspace{-2mm} 0.315\hspace{-2mm}&\hspace{-2mm}0.268 \\
   0.315\hspace{-2mm}&\hspace{-2mm}2.812\hspace{-2mm}&\hspace{-2mm}1.352\\
   0.268\hspace{-2mm}&\hspace{-2mm}1.352\hspace{-2mm}&\hspace{-2mm} 4.408\\
  \end{array}
\hspace{-2mm}\right],\bar{P}_{1}\hspace{-1mm}=\hspace{-1mm}\left[\hspace{-2mm}
  \begin{array}{ccc}
   0.982\hspace{-2mm}&\hspace{-2mm}0.924\hspace{-2mm}&\hspace{-2mm} -1.027\\
    0.924 \hspace{-2mm}&\hspace{-2mm}5.306\hspace{-2mm} &\hspace{-2mm}  0.711\\
   -1.027 \hspace{-2mm} &\hspace{-2mm}0.711\hspace{-2mm} & \hspace{-2mm} -1.327\\
  \end{array}
\hspace{-2mm}\right],\\
P_{0}&\hspace{-1mm}=\hspace{-1mm}\left[\hspace{-2mm}
  \begin{array}{ccc}
   2.026\hspace{-2mm}&\hspace{-2mm}0.353\hspace{-2mm}&\hspace{-2mm}0.364\\
   0.353\hspace{-2mm}&\hspace{-2mm}2.896\hspace{-2mm}&\hspace{-2mm}1.472\\
   0.364\hspace{-2mm}&\hspace{-2mm}1.472\hspace{-2mm}&\hspace{-2mm} 4.641\\
  \end{array}
\hspace{-2mm}\right],\bar{P}_{0}\hspace{-1mm}=\hspace{-1mm}\left[\hspace{-2mm}
  \begin{array}{ccc}
    1.217\hspace{-2mm}&\hspace{-2mm} 1.294\hspace{-2mm}&\hspace{-2mm}-1.198\\
   1.294 \hspace{-2mm}&\hspace{-2mm}6.232\hspace{-2mm} &\hspace{-2mm} 0.644\\
    -1.198 \hspace{-2mm} &\hspace{-2mm}0.644\hspace{-2mm} & \hspace{-2mm} -1.498\\
  \end{array}
\hspace{-2mm}\right],
\end{align*}
and $\Upsilon_{k}^{(1)}$, $\Upsilon_{k}^{(2)}$ for $k=0,1,2,3$ in \eqref{upsi1} and \eqref{upsi2} can be calculated as
\begin{align*}
\Upsilon_{3}^{(1)}&\hspace{-1mm}=\hspace{-1mm}\left[\hspace{-1mm}
  \begin{array}{cc}
    14.670& 5.720\\
    5.720 &4.590 \\
  \end{array}
\hspace{-1mm}\right],\Upsilon_{3}^{(2)}\hspace{-1mm}=\hspace{-1mm}\left[
  \begin{array}{cc}
    45.040& 22.850\\
     22.850 &16.330\\
  \end{array}
\hspace{-1mm}\right],\\
\Upsilon_{2}^{(1)}&\hspace{-1mm}=\hspace{-1mm}\left[\hspace{-1mm}
  \begin{array}{cc}
    29.302& 13.536\\
   13.536  &12.297\\
  \end{array}
\hspace{-1mm}\right],\Upsilon_{2}^{(2)}\hspace{-1mm}=\hspace{-1mm}\left[\hspace{-1mm}
  \begin{array}{cc}
   73.069&46.750\\
    46.750 &38.789\\
  \end{array}
\hspace{-1mm}\right],\\
\Upsilon_{1}^{(1)}&\hspace{-1mm}=\hspace{-1mm}\left[\hspace{-1mm}
  \begin{array}{cc}
    32.593& 14.480\\
    14.480&12.607\\
  \end{array}
\hspace{-1mm}\right],\Upsilon_{1}^{(2)}\hspace{-1mm}=\hspace{-1mm}\left[\hspace{-1mm}
  \begin{array}{cc}
   113.585& 76.217\\
    76.217 &64.973\\
  \end{array}
\hspace{-1mm}\right],\\
\Upsilon_{0}^{(1)}&\hspace{-1mm}=\hspace{-1mm}\left[\hspace{-1mm}
  \begin{array}{ccc}
  42.070&19.232\\
   19.232 &15.875 \\
  \end{array}
\hspace{-1mm}\right],\Upsilon_{0}^{(2)}\hspace{-1mm}=\hspace{-1mm}\left[\hspace{-1mm}
  \begin{array}{ccc}
    130.398& 82.580\\
    82.580 &68.411 \\
  \end{array}
\hspace{-1mm}\right],\\
\det[\Upsilon_{3}^{(1)}]&=34.617>0,\det[\Upsilon_{3}^{(2)}]=213.381>0,\\
 \det[\Upsilon_{2}^{(1)}]&=117.112>0 ,\det[\Upsilon_{2}^{(2)}]=648.698>0,\\
 \det[\Upsilon_{1}^{(1)}]&=201.228>0,  \det[\Upsilon_{1}^{(2)}]=1570.987>0 ,\\
\det[\Upsilon_{0}^{(1)}]&=297.946>0, \det[\Upsilon_{0}^{(2)}]=2101.236>0.
\end{align*}

Since $\Upsilon_{k}^{(1)}>0$ and $\Upsilon_{k}^{(2)}>0$, thus by Theorem \ref{main}, the unique optimal controller can be given as:
\begin{equation*}
  u_{k}=K_{k}x_{k}+\bar{K}_{k}Ex_{k}, k=0,1,2,3,
\end{equation*}
where
\begin{align*}
K_{3}&=\left[\hspace{-1mm}
  \begin{array}{ccc}
   -0.517& -0.483& -0.471 \\
  0.032 &-0.084 & -0.223\\
  \end{array}
\right],\\
\bar{K}_{3}&=\left[\hspace{-1mm}
  \begin{array}{ccc}
      0.184& 0.328&0.357 \\
   -0.522 &-0.819 & -0.920\\
  \end{array}
\right]\hspace{-1mm},\\
K_{2}&=\left[\hspace{-1mm}
  \begin{array}{ccc}
    -0.385& -0.481& -0.410 \\
   -0.030 &-0.247& -0.474\\
  \end{array}
\right],\\
\bar{K}_{2}&=\left[\hspace{-1mm}
  \begin{array}{ccc}
      0.036& 0.327& 0.336 \\
   -0.364 &-0.734& -0.828\\
  \end{array}
\right],\\
K_{1}&=\left[\hspace{-1mm}
  \begin{array}{ccc}
    -0.413& -0.476& -0.394 \\
    -0.011&-0.256&-0.500\\
  \end{array}
\right],\\
\bar{K}_{1}&=\left[\hspace{-1mm}
  \begin{array}{ccc}
     0.071& 0.345& 0.305 \\
    -0.334 &-0.699 &-0.820\\
  \end{array}
\right],\\
K_{0}&=\left[
  \begin{array}{ccc}
   -0.411& -0.487& -0.398 \\
   0.001&-0.259&-0.525\\
  \end{array}
\right],\\
\bar{K}_{0}&=\left[
  \begin{array}{ccc}
    0.070& 0.339&0.297 \\
   -0.358&-0.692&-0.780\\
  \end{array}
\right].
\end{align*}

\subsection{The Infinite Horizon Case}
Consider system \eqref{ps10} and the cost function \eqref{ps200} with the following coefficient matrices and weighting matrices:
\begin{align*}
A&=1.1,\bar{A}=0.2,B=0.4,\bar{B}=0.1,C=0.9,\bar{C}=0.5,\\
D&=0.8,\bar{D}=0.2,Q=2,\bar{Q}=1,R=1,\bar{R}=1,\sigma^{2}=1.
\end{align*}
the initial state $x_{0}\sim N(1,2)$, i.e., $x_{0}$ obeys the normal distribution with mean 1 and covariance 2.

Note that $Q=2$, $Q+\bar{Q}=3$, $R=1$, $R+\bar{R}=2$ are all positive, then Assumption 3 and Assumption 4 are satisfied. By using coupled ARE \eqref{are1}-\eqref{are2}, we have $P=5.6191$ and $\bar{P}=5.1652$. From \eqref{up1}-\eqref{hh2}, we can obtain $ \Upsilon^{(1)}=5.4953,M^{(1)}=6.5182, \Upsilon^{(2)}=10.3152$, and $M^{(2)}=14.8765$.

Notice that $P>0$ and $P+\bar{P}>0$, according to Theorem \ref{succeed}, there exists a unique optimal controller to stabilize mean-field system \eqref{ps10} as well as minimize cost function \eqref{ps200}, the controller in \eqref{control} is presented as
\begin{align*}
u_{k}=Kx_{k}+\bar{K}Ex_{k}=-1.1861x_{k}-0.2561Ex_{k},~k\geq 0.
\end{align*}

Using the designed controller, the simulation of system state is shown in Fig. 1. With the optimal controller, the regulated system state is stabilizable in mean square sense as shown in Fig. 1.
\begin{figure}[htbp]
  \centering
  \includegraphics[width=0.45\textwidth]{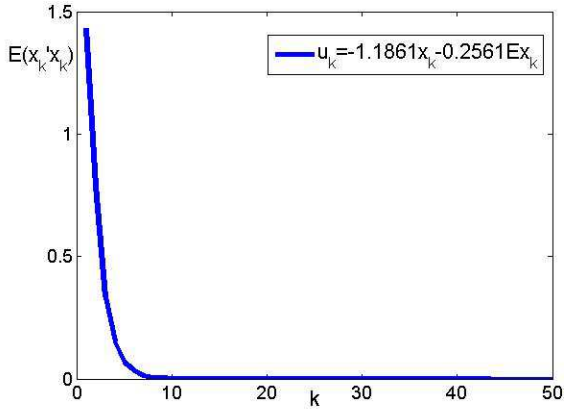}
  \caption{The mean square stabilization of mean-field system.}\label{fig:1}
\end{figure}

To explore the effectiveness of the main results presented in this paper, we consider mean-field system \eqref{ps10} and cost function \eqref{ps200} with
\begin{align*}
A&=2,\bar{A}=0.8,B=0.5,\bar{B}=1,C=1,\bar{C}=1,\\
D&=-0.8,\bar{D}=0.6,Q=1,\bar{Q}=1,R=1,\bar{R}=1,\sigma^{2}=1.
\end{align*}
The initial state are assumed to be the same as that given above.

By solving the coupled ARE \eqref{are1}, it can be found that $P$ has two negative roots as $P=-1.1400$ and $P=-0.2492$. Thus, according to Theorem \ref{succeed} and Theorem \ref{succeed2}, we know that  system \eqref{ps10} is not stabilizable in mean square sense.

Actually, when $P=-1.1400$, it is easily known that equation \eqref{are2} has no real roots for $\bar{P}$. While in the case of $P=-0.2492$, $\bar{P}$ has two real roots  which can be solved from \eqref{are2} as $\bar{P}=7.0597$ and $\bar{P}=-0.6476$, respectively.

In the latter case, with $P=-0.2492$ and $\bar{P}=7.0597$, we can calculate $K$ and $\bar{K}$ from \eqref{K} and \eqref{KK} as $K=0.0640$,  $\bar{K}=1.5939$. Similarly, with $P=-0.2492$ and $\bar{P}=-0.6476$, $K$ and $\bar{K}$ can be computed as  $K=0.0640$,  $\bar{K}=131.8389$. Accordingly, the controllers are designed as $u_{k}=0.0640x_{k}+1.5939Ex_{k}$, $u_{k}=0.0640x_{k}+131.8389Ex_{k}$, respectively.

Simulation results of the corresponding state trajectories with the designed controllers are respectively shown as in Fig. 2 and Fig. 3.  As expected, the state trajectories are not convergent.

\begin{figure}[htbp]
  \centering
  \includegraphics[width=0.45\textwidth]{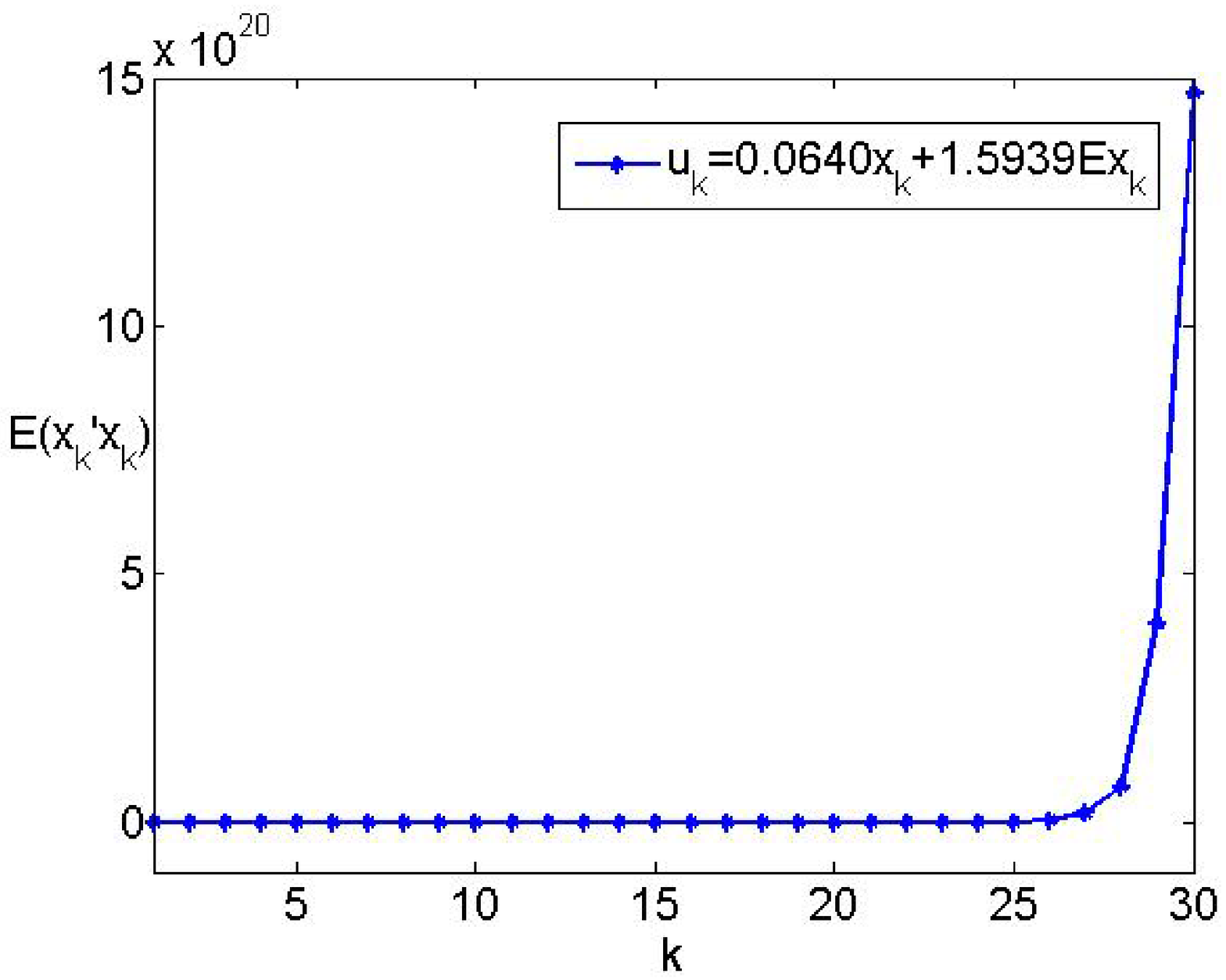}
  \caption{Simulation for the state trajectory $E(x_{k}'x_{k})$.}\label{fig:2}
\end{figure}

\begin{figure}[htbp]
  \centering
  \includegraphics[width=0.45\textwidth]{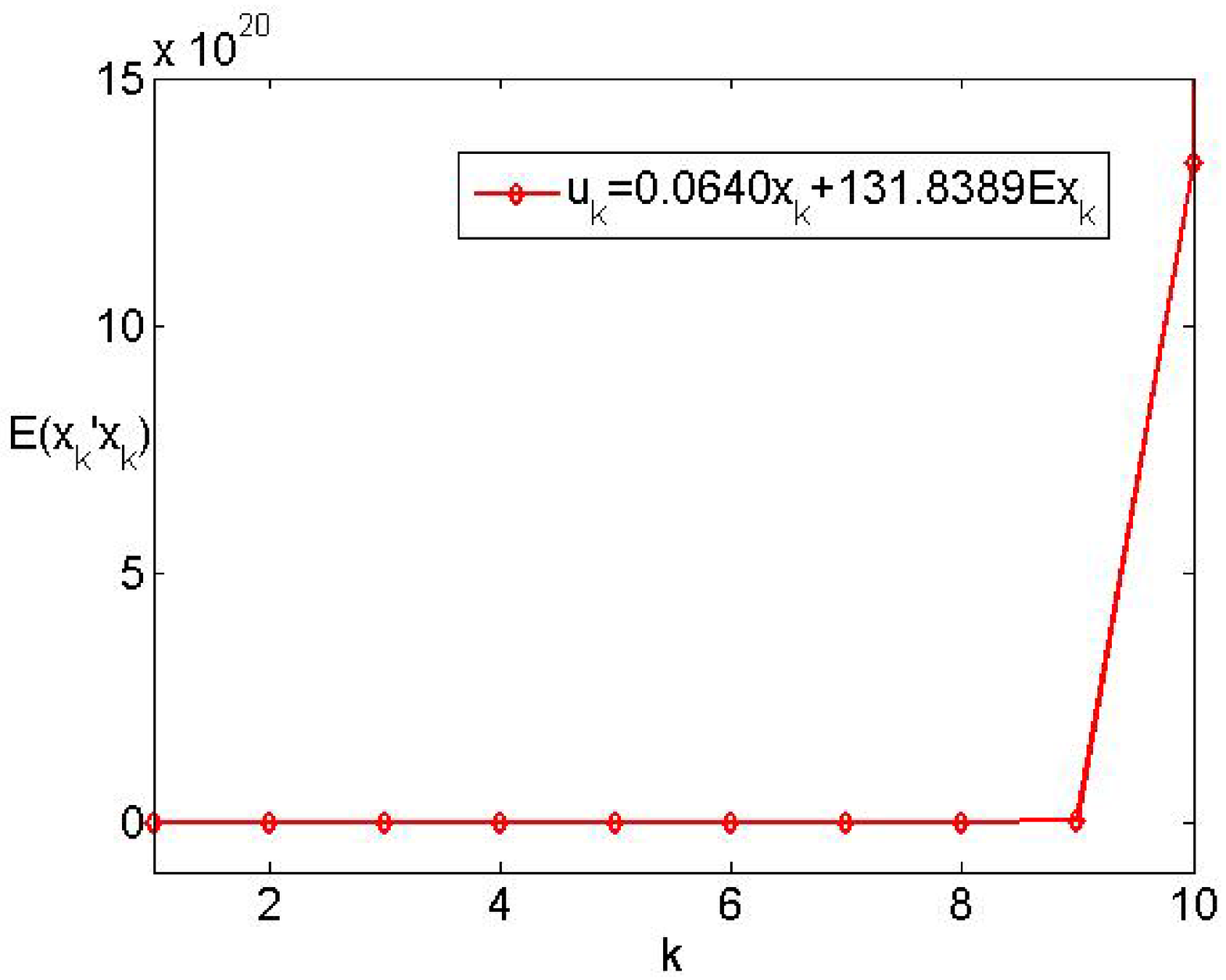}
  \caption{Simulation for the state trajectory $E(x_{k}'x_{k})$.}\label{fig:3}
\end{figure}

\section{Conclusion}
This paper proposes a new approach to stochastic optimal control with the key tools of maximum principle and solution to FBSDE explored in this paper. Accordingly, with the approach, the optimal control and stabilization problems for discrete-time mean-field systems have been essentially solved. The main results include:  1) The sufficient and necessary solvability condition of finite horizon optimal control problem has been obtained in analytical form via a coupled Riccati equation; 2) The sufficient and necessary conditions for the stabilization of mean-field systems has been obtained. It is shown that, under exactly observability assumption, the mean-field system is stabilizable in the mean square sense if and only if a coupled ARE has a unique solution $P$ and $\bar{P}$ satisfying $P>0$ and $P+\bar{P}>0$. Furthermore, under exactly detectability assumption which is weaker than exactly observability, we show that the mean-field system is stabilizable in the mean square sense if and only if the coupled ARE admits a unique solution $P$ and $\bar{P}$ satisfying $P\geq 0$ and $P+\bar{P}\geq 0$.
%The results obtained in this paper contain classical stochastic control problems as a special case. For further study, we expect the optimal %control and stabilization problems of mean-field systems with random coefficient matrices will be investigated.

\appendices

\section{Proof of Theorem \ref{maximum}}

\begin{proof}
For the general stochastic mean-field optimal control problem, the control domain for system \eqref{ps3} to minimize \eqref{ps04} is given by
\begin{equation*}
  \mathcal{U}=\left\{u_{k}\in\mathcal{R}^{m}|~u_{k}~\text{is}~\mathcal{F}_{k-1}~\text{measurable},~E|u_{k}|^{2}<\infty  \right\}.
\end{equation*}

We assume that the control domain $\mathcal{U}$ to be convex. Any $u_{k}\in \mathcal{U}$ is called admissible control. Besides, for arbitrary $u_{k},~\delta u_{k}\in \mathcal{U}$ and $\varepsilon\in(0,1)$, we can obtain $u_{k}^{\varepsilon}=u_{k}+\varepsilon \delta u_{k}\in \mathcal{U}$.

Let $x_{k}^{\varepsilon}$, $J_{N}^{\varepsilon}$ be the corresponding state and cost function with $u_{k}^{\varepsilon}$, and $x_{k}$, $J_{N}$ represent the corresponding state and cost function with $u_{k}$.

We examine the increment in $J_N$ due to increment in the controller $u_k$. Assume that final time
$N+1$ is fixed, by using Taylor's expansion and following cost function \eqref{ps04}, the increment $\delta
J_{N}=J_{N}^{\varepsilon}-J_{N}$ can be calculated as follows,
\begin{align}\label{mp0001}
&\delta J_N=E\Big\{\phi_{x_{N+1}} \delta x_{N+1}+\phi_{Ex_{N+1}} \delta Ex_{N+1}\notag\\
&\hspace{-1mm}+\hspace{-1mm}\sum_{k=0}^{N}\hspace{-1mm}\big[L^k_{x_k}\delta x_{k}\hspace{-1mm}+\hspace{-1mm}L^k_{Ex_k}\delta Ex_{k}\hspace{-1mm}+\hspace{-1mm}L^k_{u_k}\varepsilon\delta u_{k}\hspace{-1mm}+\hspace{-1mm}L^k_{Eu_k}\varepsilon\delta Eu_{k}\big]\Big\}\hspace{-1mm}+\hspace{-1mm}O(\varepsilon^{2})\notag\\
&\hspace{-1mm}=\hspace{-1mm}E\big\{[\phi_{x_{N+1}}\hspace{-1mm}+\hspace{-1mm}E(\phi_{Ex_{N+1}})]\delta x_{N+1}\hspace{-1mm}+\hspace{-1mm}\sum_{k=0}^{N}[L^k_{u_k}\hspace{-1mm}+\hspace{-1mm}E(L_{Eu_{k}}^{k})]\varepsilon \delta u_{k}\notag\\
&+\sum_{k=0}^{N}[L^k_{x_k}+E(L^k_{Ex_k})] \delta x_{k}\big\}+O(\varepsilon^{2}).
\end{align}
where $O(\varepsilon^{2})$ means infinitesimal of the same order with $\varepsilon^{2}$.

Another thing to note is the variation of the initial state $\delta x_{0}=\delta Ex_{0}=0$.

By \eqref{ps1} and \eqref{mp0002}, for $\delta x_{k}=x_{k}^{\varepsilon}-x_{k}$, the following assertion holds,
\begin{align}\label{asser}
  \left[\hspace{-2mm}
  \begin{array}{cc}
   \delta x_{k+1} \\
    \delta Ex_{k+1}
  \end{array}
\hspace{-3mm}\right]&\hspace{-1mm}=\hspace{-1mm}\left[\hspace{-2mm}
  \begin{array}{cc}
    f_{x_{k}}^{k}\hspace{-2mm} &\hspace{-2mm} f_{Ex_{k}}^{k} \\
    g_{x_{k}}^{k}               \hspace{-2mm} &\hspace{-2mm} g_{Ex_{k}}^{k}\\
  \end{array}
\hspace{-2mm}\right]\hspace{-1mm}\left[\hspace{-2mm}
  \begin{array}{cc}
    \delta x_{k} \\
    \delta Ex_{k}
  \end{array}
\hspace{-2mm}\right]\hspace{-1mm}+\hspace{-1mm}\left[\hspace{-2mm}
  \begin{array}{cc}
    f_{u_{k}}^{k} \hspace{-2mm}&\hspace{-2mm} f_{Eu_{k}}^{k} \\
    g_{u_{k}}^{k}\hspace{-2mm}&\hspace{-2mm} g_{Eu_{k}}^{k}\\
  \end{array}
\hspace{-2mm}\right]\hspace{-1mm}\left[\hspace{-2mm}
  \begin{array}{cc}
    \varepsilon\delta u_{k} \\
    \varepsilon\delta Eu_{k}
  \end{array}
\hspace{-2mm}\right],
\end{align}

Thus the variation of $\delta x_{k+1}$ can be presented as
\begin{align}\label{mp0005}
&\delta x_{k+1} =f_{x_{k}}^{k}\delta x_{k}+f_{u_{k}}^{k}\varepsilon\delta u_{k}+f_{Ex_{k}}^{k}\delta Ex_{k}+f_{Eu_{k}}^{k}\varepsilon\delta Eu_{k}\notag\\
&=\left[\hspace{-2mm}
  \begin{array}{cc}
    f_{x_{k}}^{k} \hspace{-2mm}&\hspace{-2mm} \hspace{-1mm}f_{Ex_{k}}^{k} \\
  \end{array}
\hspace{-2mm}\right] \hspace{-1mm}\left[\hspace{-2mm}
  \begin{array}{cc}
   \delta x_{k} \\
   \delta Ex_{k}
  \end{array}
\hspace{-2mm}\right] +f_{u_{k}}^{k}\varepsilon\delta u_{k}+f_{Eu_{k}}^{k}\varepsilon\delta Eu_{k}\notag \\
&=\left[\hspace{-2mm}
  \begin{array}{cc}
   f_{x_{k}}^{k} \hspace{-2mm}&\hspace{-2mm} f_{Ex_{k}}^{k}\\
  \end{array}
\hspace{-2mm}\right]\hspace{-1mm}\left[\hspace{-2mm}
  \begin{array}{cc}
   f_{x_{k-1}}^{k-1}\hspace{-2mm} &\hspace{-2mm} f_{Ex_{k-1}}^{k-1} \\
    g_{x_{k-1}}^{k-1} \hspace{-2mm} &\hspace{-2mm} g_{Ex_{k-1}}^{k-1}\\
  \end{array}
\hspace{-2mm}\right]\hspace{-1mm}\left[\hspace{-2mm}
  \begin{array}{cc}
    \delta x_{k-1} \\
    \delta Ex_{k-1}
  \end{array}
\hspace{-2mm}\right]\notag\\
&+\left[\hspace{-2mm}
  \begin{array}{cc}
   f_{x_{k}}^{k} \hspace{-3mm}& \hspace{-3mm}f_{Ex_{k}}^{k} \\
  \end{array}
\hspace{-2mm}\right]\hspace{-1mm}\left[\hspace{-2mm}
  \begin{array}{cc}
    f_{u_{k-1}}^{k-1}\hspace{-3mm} &\hspace{-3mm} f_{Eu_{k-1}}^{k-1} \\
    g_{u_{k-1}}^{k-1}     \hspace{-3mm}&\hspace{-3mm} g_{Eu_{k-1}}^{k-1}\\
  \end{array}
\hspace{-2mm}\right]\hspace{-1mm}\left[\hspace{-2mm}
  \begin{array}{cc}
    \varepsilon\delta u_{k-1}\hspace{-1mm} \\
    \varepsilon\delta Eu_{k-1}\hspace{-1mm}
  \end{array}
\hspace{-2mm}\right]\hspace{-1mm}+\hspace{-1mm}f_{u_{k}}^{k}\delta u_{k}
\hspace{-1mm}+\hspace{-1mm}f_{Eu_{k}}^{k}\delta Eu_{k}\notag \\
&=\tilde{F}_{x}(k,0)\hspace{-1mm}\left[\hspace{-2mm}
  \begin{array}{cc}
   \delta x_{0} \\
    \delta Ex_{0}
  \end{array}
\hspace{-2mm}\right]\hspace{-1mm}+\hspace{-1mm}\sum_{l=0}^{k}\tilde{F}_{x}(k,l+1)\hspace{-1mm}\left[\hspace{-2mm}
  \begin{array}{cc}
    f_{u_{l}}^{l}& f_{Eu_{l}}^{l} \\
    g_{u_{l}}^{l} & g_{Eu_{l}}^{l}\\
  \end{array}
\hspace{-2mm}\right]\hspace{-1mm}\left[\hspace{-2mm}
  \begin{array}{cc}
    \varepsilon\delta u_{l} \\
    \varepsilon\delta Eu_{l}
  \end{array}
\hspace{-2mm}\right]\notag\\
&\hspace{-1mm}=\hspace{-1mm}\sum_{l=0}^{k}\hspace{-1mm}\tilde{F}_{x}(k,l\hspace{-1mm}+\hspace{-1mm}1)\hspace{-1mm}\left[\hspace{-2mm}
  \begin{array}{cc}
    f_{u_{l}}^{l}\\
  g_{u_{l}}^{l} \\
  \end{array}
\hspace{-2mm}\right]\hspace{-1mm}\varepsilon\delta u_{l}\hspace{-1mm}+\hspace{-1mm}\sum_{l=0}^{k}\hspace{-1mm}\tilde{F}_{x}(k,l\hspace{-1mm}+\hspace{-1mm}1)\left[\hspace{-2mm}
  \begin{array}{cc}
    f_{Eu_{l}}^{l} \\
   g_{Eu_{l}}^{l}\\
  \end{array}
\hspace{-2mm}\right]\hspace{-1mm}\varepsilon\delta Eu_{l},
\end{align}
where
\begin{equation}\label{fx}
 \tilde{F}_x(k,l)=\left[\hspace{-2mm}
  \begin{array}{cc}
    f_{x_{k}}^{k} \hspace{-2mm}&\hspace{-2mm} f_{Ex_{k}}^{k}  \\
  \end{array}
\hspace{-2mm}\right]\tilde{f}^{k-1}_{x_{k-1}} \cdots \tilde{f}^l_{x_l},l=0,\cdots,k;
\end{equation}
$\tilde{F}_x(k,k+1)=[I_{n}~0]$,
and
$\tilde{f}^{l}_{x_{l}}=\left[\hspace{-2mm}
  \begin{array}{cc}
   f_{x_{l}}^{l} \hspace{-2mm}&\hspace{-2mm} f_{Ex_{l}}^{l} \\
    g_{x_{l}}^{l} \hspace{-2mm}&\hspace{-2mm} g_{Ex_{l}}^{l}\\
  \end{array}
\hspace{-2mm}\right].$

Substituting \eqref{mp0005} into \eqref{mp0001} yields
\begin{align}\label{mp02}
  &\delta J_N\hspace{-1mm}=\hspace{-1mm}E\Big\{[\phi_{x_{N+1}}\hspace{-1mm}+\hspace{-1mm}E(\phi_{Ex_{N+1}})]\sum_{l=0}^{N}\tilde{F}_{x}(N,l\hspace{-1mm}+\hspace{-1mm}1)\hspace{-1mm}\left[\hspace{-2mm}
  \begin{array}{cc}
    f_{u_{l}}^{l}\\
    g_{u_{l}}^{l}\\
  \end{array}
\hspace{-2mm}\right]\hspace{-1mm}\varepsilon\delta u_{l}\notag\\
&+[\phi_{x_{N+1}}\hspace{-1mm}+\hspace{-1mm}E(\phi_{Ex_{N+1}})]\sum_{l=0}^{N}\tilde{F}_{x}(N,l\hspace{-1mm}+\hspace{-1mm}1)\hspace{-1mm}\left[\hspace{-2mm}
  \begin{array}{cc}
    f_{Eu_{l}}^{l} \\
   g_{Eu_{l}}^{l}\\
  \end{array}
\hspace{-2mm}\right]\hspace{-1mm}\varepsilon\delta Eu_{l}\notag\\
&+\sum_{k=0}^{N}[L^k_{x_k}\hspace{-1mm}+\hspace{-1mm}E(L^k_{Ex_k})]\sum_{l=0}^{k-1}\tilde{F}_{x}(k\hspace{-1mm}-\hspace{-1mm}1,l\hspace{-1mm}+\hspace{-1mm}1)\hspace{-1mm}\left[\hspace{-2mm}
  \begin{array}{cc}
   f_{u_{l}}^{l}\\
   g_{u_{l}}^{l}\\
  \end{array}
\hspace{-2mm}\right]\hspace{-1mm}\varepsilon\delta u_{l}\notag\\
&+\sum_{k=0}^{N}[L^k_{x_k}\hspace{-1mm}+\hspace{-1mm}E(L^k_{Ex_k})]\sum_{l=0}^{k-1}\tilde{F}_{x}(k\hspace{-1mm}-\hspace{-1mm}1,l\hspace{-1mm}+\hspace{-1mm}1)\hspace{-1mm}\left[\hspace{-1mm}
  \begin{array}{cc}
   f_{Eu_{l}}^{l}\\
  g_{Eu_{l}}^{l}\\
  \end{array}
\hspace{-2mm}\right]\hspace{-1mm}\varepsilon\delta Eu_{l}\notag\\
&+\sum_{k=0}^{N}[L^k_{u_k}+E(L_{Eu_{k}}^{k})] \varepsilon\delta u_{k}\Big\}+O(\varepsilon^{2}).
\end{align}

Note the facts that
\begin{align}
 &
 ~~E\Big\{[L^k_{x_k}\hspace{-1mm}+\hspace{-1mm}E(L^k_{Ex_k})]\sum_{l=0}^{k-1}\tilde{F}_{x}(k\hspace{-1mm}-\hspace{-1mm}1,l\hspace{-1mm}+\hspace{-1mm}1)\hspace{-1mm}\left[\hspace{-2mm}
  \begin{array}{cc}
    f_{Eu_{l}}^{l}\\
    g_{Eu_{l}}^{l}\\
  \end{array}
\hspace{-2mm}\right]\hspace{-1mm}\varepsilon\delta Eu_{l}
  \Big\}\notag\\
  &\hspace{-1mm}=\hspace{-1mm}
  E\hspace{-1mm}\Big\{E\Big\{[L^k_{x_k}\hspace{-1mm}+\hspace{-1mm}E(L^k_{Ex_k})]\sum_{l=0}^{k-1}\tilde{F}_{x}(k\hspace{-1mm}-\hspace{-1mm}1,l\hspace{-1mm}+\hspace{-1mm}1)\hspace{-1mm}\left[\hspace{-2mm}
  \begin{array}{cc}
    f_{Eu_{l}}^{l}\\
    g_{Eu_{l}}^{l}\\
  \end{array}
\hspace{-2mm}\right]\hspace{-1mm}\Big\}\varepsilon\delta u_{l}
 \hspace{-1mm} \Big\},\label{exp}\\
 & ~~ E\hspace{-1mm}\Big\{[\phi_{x_{N+1}}\hspace{-1mm}+\hspace{-1mm}E(\phi_{Ex_{N+1}})]\sum_{l=0}^{N}\tilde{F}_{x}(N,l\hspace{-1mm}+\hspace{-1mm}1)\hspace{-1mm}\left[\hspace{-2mm}
  \begin{array}{cc}
    f_{Eu_{l}}^{l}\\
    g_{Eu_{l}}^{l}\\
  \end{array}
\hspace{-2mm}\right]\hspace{-1mm}\varepsilon\delta Eu_{l}\Big\}\notag\\
  &\hspace{-1mm}=
  \hspace{-1mm}E\Big\{E\Big\{\hspace{-1mm}[\phi_{x_{N+1}}\hspace{-1mm}+\hspace{-1mm}E(\phi_{Ex_{N+1}})]\sum_{l=0}^{N}\hspace{-1mm}\tilde{F}_{x}(N,l\hspace{-1mm}+\hspace{-1mm}1)\hspace{-1mm}\left[\hspace{-2mm}
  \begin{array}{cc}
   f_{Eu_{l}}^{l}\\
  g_{Eu_{l}}^{l}\\
  \end{array}
\hspace{-2mm}\right]\hspace{-1mm}\Big\}\varepsilon\delta u_{l}
  \hspace{-1mm}\Big\}.\label{exp1}
\end{align}

Also,we have
\begin{align}\label{mp3}
&\sum_{k=0}^{N}[L^k_{x_k}+E(L^k_{Ex_k})]\sum_{l=0}^{k-1} \tilde{F}_{x}(k-\hspace{-1mm}1,l+\hspace{-1mm}1)\hspace{-1mm}\left[\hspace{-2mm}
  \begin{array}{cc}
    f_{u_{l}}^{l}\\
    g_{u_{l}}^{l}\\
  \end{array}
\hspace{-2mm}\right]\hspace{-1mm}\varepsilon\delta u_{l}\\
&\hspace{-1mm}=\hspace{-1mm}\sum_{l=0}^{N-1}\left\{\sum_{k=l+1}^{N}[L^k_{x_k}\hspace{-1mm}+\hspace{-1mm}E(L^k_{Ex_k})]\tilde{F}_{x}(k\hspace{-1mm}-\hspace{-1mm}1,l\hspace{-1mm}+\hspace{-1mm}1)\hspace{-1mm}\left[\hspace{-2mm}
  \begin{array}{cc}
    f_{u_{l}}^{l}\\
    g_{u_{l}}^{l}\\
  \end{array}
\hspace{-2mm}\right]\hspace{-1mm}\right\}\varepsilon\delta u_{l},\notag\\
&\notag\\
&\sum_{k=0}^{N}E\hspace{-1mm}\left\{[L^k_{x_k}\hspace{-1mm}+\hspace{-1mm}E(L^k_{Ex_k})]\sum_{l=0}^{k-1} \hspace{-1mm} \tilde{F}_{x}(k-1,l+1)\hspace{-1mm}\left[\hspace{-2mm}
  \begin{array}{cc}
    f_{Eu_{l}}^{l}\\
   g_{Eu_{l}}^{l}\\
  \end{array}
\hspace{-2mm}\right]\right\}\hspace{-1mm}\varepsilon\delta u_{l}\notag\\
&\hspace{-1.3mm}=\hspace{-1.6mm}\sum_{l=0}^{N\hspace{-0.5mm}-\hspace{-0.5mm}1}\hspace{-1mm}\left\{\hspace{-1mm}E\hspace{-1mm}\left\{
\sum_{k\hspace{-0.3mm}=\hspace{-0.3mm}l\hspace{-0.3mm}+\hspace{-0.3mm}1}^{N}\hspace{-1mm}[L^k_{x_k}\hspace{-1mm}+\hspace{-1mm}E(L^k_{Ex_k})]'\tilde{F}_{x}(k\hspace{-1mm}-\hspace{-1mm}1,l+\hspace{-1mm}1)\hspace{-1mm}\left[\hspace{-2mm}
  \begin{array}{cc}
   f_{Eu_{l}}^{l}\\
  g_{Eu_{l}}^{l} \\
  \end{array}
\hspace{-2mm}\right]\hspace{-1mm}\right\}\hspace{-1mm}\right\}\hspace{-1mm}\varepsilon\delta u_{l}.\label{mp3000}
\end{align}

Therefore, \eqref{mp02} becomes
\begin{align}\label{mp4}
\delta J_N &\hspace{-1mm}=\hspace{-1mm}E\Big\{\mathcal{G}(N+1,N)\varepsilon\delta u_{N}\hspace{-1mm}+\hspace{-1mm}\sum_{l=0}^{N-1}[\mathcal{G}(l\hspace{-1mm}+\hspace{-1mm}1,N)]\varepsilon\delta u_{l}\Big\}\hspace{-1mm}+\hspace{-1mm}O(\varepsilon^{2}),
\end{align}
where
\begin{align}
  &\mathcal{G}(N+1,N)=[\phi_{x_{N+1}}+E(\phi_{Ex_{N+1}})]f_{u_{N}}^{N}\notag\\
  &\hspace{-1mm}+\hspace{-1mm}E\Big\{[\phi_{x_{N+1}}\hspace{-1mm}+\hspace{-1mm}E(\phi_{Ex_{N+1}})]f_{Eu_{N}}^{N}\Big\}\hspace{-1mm}+\hspace{-1mm}[L_{u_{N}}^{N}\hspace{-1mm}+\hspace{-1mm}E(L_{Eu_{N}}^{N})],\label{mp04}\\
  &\mathcal{G}(l+1,N)\notag\\
  &=[\phi_{x_{N+1}}+E(\phi_{Ex_{N+1}})]\tilde{F}_{x}(N,l+1)\left[\hspace{-2mm}
  \begin{array}{cc}
    f_{u_{l}}^{l}\\
    g_{u_{l}}^{l}\\
  \end{array}
\hspace{-2mm}\right]\notag\\
&+E\left\{[\phi_{x_{N+1}}+E(\phi_{Ex_{N+1}})]\tilde{F}_{x}(N,l+1)\left[\hspace{-2mm}
  \begin{array}{cc}
   f_{Eu_{l}}^{l}\\
    g_{Eu_{l}}^{l} \\
  \end{array}
\hspace{-2mm}\right]\right\}\notag\\
  &+\sum_{k=l+1}^{N}[L_{x_{k}}^{k}+E(L_{Ex_{k}}^{k})]\tilde{F}_{x}(k-1,l+1)\left[\hspace{-2mm}
  \begin{array}{cc}
    f_{u_{l}}^{l}\\
   g_{u_{l}}^{l}\\
  \end{array}
\hspace{-2mm}\right]\notag\\
&+E\Big\{\sum_{k=l+1}^{N}[L_{x_{k}}^{k}+E(L_{Ex_{k}}^{k})]\tilde{F}_{x}(k-1,l+1)\left[\hspace{-2mm}
  \begin{array}{cc}
   f_{Eu_{l}}^{l}\\
   g_{Eu_{l}}^{l} \\
  \end{array}
\hspace{-2mm}\right]\Big\}\notag\\
  &+[L_{u_{l}}^{l}+E(L_{Eu_{l}}^{l})].\label{mp004}
\end{align}

Furthermore, \eqref{mp4} can be rewritten as
\begin{align}\label{mp9}
\delta J_{N}
&=E\Big\{ E \left[{\mathcal G}(N+1,N)\mid {\mathcal F}_{N-1}\right]\varepsilon\delta u_{N}\Big\}\notag\\
&+E\Big\{\sum_{l=0}^{N-1} E \left[{\mathcal G}(l+1, N) \mid {\mathcal
F}_{l-1}\right]\varepsilon\delta u_l\Big\}+O(\varepsilon^{2})\notag\\
&+E\left\lbrace  \left\lbrace {\mathcal G}(N\hspace{-1mm}+\hspace{-1mm}1,N)-E\left[{\mathcal
G}(N\hspace{-1mm}+\hspace{-1mm}1,N)\mid {\mathcal F}_{N-1}\right]\right\rbrace \varepsilon\delta u_{N}
\right\rbrace \notag\\
 &+E\Big\{\sum_{l=0}^{N-1}\left\lbrace  {\mathcal G}(l\hspace{-1mm}+\hspace{-1mm}1, N)\hspace{-1mm}-\hspace{-1mm}E
\left[{\mathcal G}(l\hspace{-1mm}+\hspace{-1mm}1, N) \mid {\mathcal F}_{l-1}\right] \right\rbrace
\varepsilon\delta u_l\Big\}\notag\\
&=E\Big\{ E \left[{\mathcal G}(N+1,N)\mid {\mathcal F}_{N-1}\right]\varepsilon\delta u_{N}\notag\\
&+\sum_{l=0}^{N-1} E \left[{\mathcal G}(l+1, N) \mid {\mathcal
F}_{l-1}\right]\varepsilon\delta u_l\Big\}\hspace{-1mm}+\hspace{-1mm}O(\varepsilon^{2}),
\end{align}
where the following facts are applied in the last equality,
\begin{equation*}\begin{split}
E\left\lbrace  \left\lbrace {\mathcal G}(N+1,N)\hspace{-1mm}-\hspace{-1mm}E\left[{\mathcal
G}(N+1,N)\mid {\mathcal F}_{N-1}\right]\right\rbrace \varepsilon\delta u_{N}
\right\rbrace
&=0, \notag\\
E\Big\{\sum_{l=0}^{N-1}\left\lbrace  {\mathcal G}(l\hspace{-1mm}+\hspace{-1mm}1, N)\hspace{-1mm}-\hspace{-1mm}E
\left[{\mathcal G}(l\hspace{-1mm}+\hspace{-1mm}1, N) \mid {\mathcal F}_{l-1}\right] \right\rbrace
\hspace{-1mm}\varepsilon\delta u_l\hspace{-1mm}\Big\}&=0.
\end{split}\end{equation*}

Since $\delta u_{l}$ is arbitrary for $0\leq l\leq N$, thus the necessary condition for the minimum can be given from (\ref{mp9}) as
\begin{align}
0&=E \left\lbrace {\mathcal G}(N+1,N)\mid {\mathcal F}_{N-1}\right\rbrace,\label{mp11}\\
0&=E\left\lbrace {\mathcal G}(l+1, N) \mid {\mathcal F}_{l-1} \right\rbrace,~l=0,\cdots,N-1.\label{mp011}
\end{align}

Now we will show that the equation \eqref{ps43}-\eqref{ps42} is a restatement of the necessary conditions \eqref{mp11}-\eqref{mp011}.

In fact, substituting \eqref{ps42} into
\eqref{ps43} and letting $k=N$, we have
\begin{align}\label{yz1}
&E\Big\lbrace (L^{N}_{u_{N}})'\hspace{-1mm}+\hspace{-1mm}E(L_{Eu_{N}}^{N})'\hspace{-1mm}+\hspace{-1mm}(f^{N}_{u_{N}})'
[\phi_{x_{N+1}}+E(\phi_{Ex_{N+1}})]'\notag\\
&+E\big\{(f_{Eu_{N}}^{N})'[\phi_{x_{N+1}}\hspace{-1mm}+\hspace{-1mm}E(\phi_{Ex_{N+1}})]'\big\}\Big|
{\mathcal F}_{N-1}\Big\rbrace \hspace{-1mm}=\hspace{-1mm}0,
\end{align}
which means that \eqref{yz1} is exactly \eqref{mp11}.

Furthermore, noting \eqref{ps41}, we have that
\begin{align}\label{mp14}
&\lambda_{k-1}\hspace{-1mm}=\hspace{-1mm}E\Big\{\left[\hspace{-2mm}
  \begin{array}{cc}
    I_{n}\\
    0   \\
  \end{array}
\hspace{-2mm}\right][(L^k_{x_k})'\hspace{-1mm}+\hspace{-1mm}E(L_{Ex_{k}}^{k})]'\hspace{-1mm}+\hspace{-1mm}[\tilde{f}^k_{x_k}]'\lambda_{k}\Big|\mathcal{F}_{k-1}\Big\}\notag\\
&=E\Big\{\hspace{-1.5mm}\left[\hspace{-2mm}
  \begin{array}{cc}
   I_{n} \\
    0  \\
  \end{array}
\hspace{-2mm}\right]\hspace{-1.5mm}[L^k_{x_k}\hspace{-1mm}+\hspace{-1mm}E(L_{Ex_{k}}^{k})]'\hspace{-1mm}+\hspace{-1mm}(\tilde{f}^k_{x_k})'\left[\hspace{-2mm}
  \begin{array}{cc}
    I_{n} \\
    0   \\
  \end{array}
\hspace{-2mm}\right][L^{k+1}_{x_k+1}\hspace{-1mm}+\hspace{-1mm}E(L_{Ex_{k}}^{k+1})]'\notag\\
&~~~+(\tilde{f}^k_{x_k})'(\tilde{f}^k_{x_k})'\lambda_{k+1}\Big|\mathcal{F}_{k-1}\Big\}\notag\\
&=E\Big\{\hspace{-1.5mm}\left[\hspace{-2mm}
  \begin{array}{cc}
  I_{n} \\
    0   \\
  \end{array}
\hspace{-2mm}\right]\hspace{-1.5mm}[L^k_{x_k}\hspace{-1mm}+\hspace{-1mm}E(L_{Ex_{k}}^{k})]'\hspace{-1mm}+\hspace{-1mm}(\tilde{f}^k_{x_k})'\left[\hspace{-2mm}
  \begin{array}{cc}
    I_{n} \\
    0    \\
  \end{array}
\hspace{-2mm}\right][L^k_{x_k+1}\hspace{-1mm}+\hspace{-1mm}E(L_{Ex_{k}}^{k+1})]'\notag\\
&~~+(\tilde{f}^k_{x_k})'(\tilde{f}^{k+1}_{x_k+1})'\left[\hspace{-2mm}
  \begin{array}{cc}
    I_{n} \\
    0      \\
  \end{array}
\hspace{-2mm}\right][L^{k+2}_{x_k+2}+E(L_{Ex_{k+2}}^{k+2})]'+\cdots\notag\\
&~~+(\tilde{f}^k_{x_k})'(\tilde{f}^{k+1}_{x_k+1})'\cdots(\tilde{f}^{N-1}_{x_N-1})'\left[\hspace{-2mm}
  \begin{array}{cc}
   I_{n} \\
    0   \\
  \end{array}
\hspace{-2mm}\right][L^{N}_{x_N}\hspace{-1mm}+\hspace{-1mm}E(L_{Ex_{N}}^{N})]'\notag\\
&~~+(\tilde{f}^k_{x_k})'(\tilde{f}^{k+1}_{x_k+1})'\cdots(\tilde{f}^{N}_{x_N})'\lambda_{N}\Big|\mathcal{F}_{k-1}\Big\}\notag\\
&=E\Big\{\sum_{j=k}^{N}\tilde{F}_{x}'(j-1,k)[L_{x_{j}}^{j}+E(L_{Ex_{j}}^{j})]\notag\\
&~~~+\tilde{F}_{x}'(N,k)[\phi_{x_{N+1}}+E(\phi_{Ex_{N+1}})]'\Big|\mathcal{F}_{k-1}\Big\}.
\end{align}

Substituting \eqref{mp14} into \eqref{ps43}, one has
\begin{align}\label{mp15}
0&\hspace{-1mm}=\hspace{-1mm}E\Bigg\{ [L^k_{u_k}+E(L_{Eu_{k}}^{k})]'\hspace{-1mm}\notag\\
&+\hspace{-1mm}   \sum_{j=k+1}^{N} \left[\hspace{-2mm}
  \begin{array}{cc}
     f_{u_{k}}^{k} \\
    g_{u_{k}}^{k}           \\
  \end{array}
\hspace{-2mm}\right]' \left\{\tilde{F}'_{x}(j-1,k+1)[L_{x_{j}}^{j}+E(L_{Ex_{j}}^{j})]'\right\}\notag\\
&+\hspace{-1mm}\left[\hspace{-2mm}
  \begin{array}{cc}
     f_{u_{k}}^{k} \\
    g_{u_{k}}^{k}  \\
  \end{array}
\hspace{-2mm}\right]' \left\{\tilde{F}_{x}'(N,k+1)[\phi_{x_{N+1}}+E(\phi_{Ex_{N+1}})]'\right\}\notag\\
&\hspace{-1mm}+\hspace{-1mm}E\hspace{-1mm}\bigg\{\hspace{-1mm}\sum_{j=k+1}^{N}\hspace{-1mm}\left[\hspace{-2mm}
  \begin{array}{cc}
    f_{Eu_{k}}^{k} \\
    g_{Eu_{k}}^{k} \\
  \end{array}
\hspace{-2mm}\right]'\hspace{-1mm}\left\{\tilde{F}'_{x}(j\hspace{-1mm}-\hspace{-1mm}1,k\hspace{-1mm}+\hspace{-1mm}1)[L_{x_{j}}^{j}\hspace{-1mm}+\hspace{-1mm}E(L_{Ex_{j}}^{j}]'\right\}
\bigg\}\notag\\
&\hspace{-1mm}+\hspace{-1mm}E\bigg\{\hspace{-1mm}\left[\hspace{-2mm}
  \begin{array}{cc}
    f_{Eu_{k}}^{k} \\
   g_{Eu_{k}}^{k}  \\
  \end{array}
\hspace{-2mm}\right]'\hspace{-1mm}\left\{\tilde{F}'_{x}(N,k\hspace{-1mm}+\hspace{-1mm}1)[\phi_{x_{N+1}}\hspace{-1mm}+\hspace{-1mm}E(\phi_{Ex_{N+1}})]'\hspace{-1mm}\right\}\hspace{-1mm}
\bigg\}\hspace{-1mm}\Bigg|\mathcal{F}_{k-1} \hspace{-1mm}\Bigg\},\notag\\
& ~~~k=0,\cdots, N,
\end{align}
which is \eqref{mp011}. It has been proved that \eqref{ps43}-\eqref{ps42} are exactly the necessary conditions for the minimum of $J_{N}$. The proof is complete.
\end{proof}

\section{Proof of Theorem \ref{main}}

\begin{proof}
``Necessity": Under Assumption \ref{ass1}, if \emph{Problem 1} has a unique solution, we will show by induction that
$\Upsilon_{k}^{(1)},~\Upsilon_{k}^{(2)}$ are all strictly positive definite and the optimal controller is given by \eqref{th43}.

Firstly, we denote $J(k)$ as below
\begin{align}
  J(k)&\triangleq \sum_{j=k}^{N}E\Big[x_{j}'Q_{j}x_{j}+(Ex_{j})'\bar{Q}_{j}Ex_{j}\notag\\
  &+u_{j}'R_{j}u_{j}+(Eu_{j})'\bar{R}_{j}Eu_{j}\Big]\notag\\
  &+E[x_{N+1}'P_{N+1}x_{N\hspace{-0.5mm}+\hspace{-0.5mm}1}]
  \hspace{-1mm}+\hspace{-1mm}(Ex_{N\hspace{-0.5mm}+\hspace{-0.5mm}1})'\bar{P}_{N\hspace{-0.5mm}+\hspace{-0.5mm}1}Ex_{N\hspace{-0.5mm}+\hspace{-0.5mm}1}.\label{barjk}
\end{align}

For $k=N$, equation \eqref{barjk} becomes
\begin{align}
 & J(N)=E\Big[x_{N}'Q_{N}x_{N}+(Ex_{N})'\bar{Q}_{N}Ex_{N}\notag\\
  &~~+u_{N}'R_{N}u_{N}+(Eu_{N})'\bar{R}_{N}Eu_{N}\Big]\notag\\
  &~~+E[x_{N+1}'P_{N+1}x_{N+1}]\hspace{-1mm}+\hspace{-1mm}(Ex_{N+1})'\bar{P}_{N+1}Ex_{N+1}.\label{barjn}
\end{align}

Using system dynamics \eqref{ps1}, $J(N)$ can be calculated as a quadratic form of $x_{N}$, $Ex_{N}$, $u_{N}$ and $Eu_{N}$. By Assumption \ref{ass1}, we know that the minimum of \eqref{barjn} must satisfy $J^{*}(N)\geq 0$.

Let $x_{N}=0$, since it is assumed \emph{Problem \ref{prob1}} admits a unique solution, thus it is clear that $u_{N}=0$ is the optimal controller and optimal cost function is $J^{*}(N)=0$.

Hence, $J(N)$ must be strictly positive for any nonzero $u_{N}$, i.e., for $u_{N}\neq 0$, we can obtain
\begin{align}
J(N)&=E[(u_{N}\hspace{-1mm}-\hspace{-1mm}Eu_{N})'\Upsilon_{N}^{(1)}(u_{N}\hspace{-1mm}-\hspace{-1mm}Eu_{N})]\hspace{-1mm}
  +\hspace{-1mm}Eu_{N}'\Upsilon_{N}^{(2)}Eu_{N}\notag\\
  &>0.\label{barjn1}
\end{align}

Following Lemma \ref{lemma01}, clearly we have $\Upsilon_{N}^{(1)}>0$ and  $\Upsilon_{N}^{(2)}>0$ from \eqref{barjn1}. In fact, in the case $Eu_{N}=0$ and $u_{N}\neq 0$, equation \eqref{barjn1} becomes
\begin{equation*}
  J(N)=E[u_{N}'\Upsilon_{N}^{(1)}u_{N}]>0.
\end{equation*}
Thus $\Upsilon_{N}^{(1)}>0$ can be obtained by using Lemma \ref{lemma01} and Remark \ref{rem1}.

On the other hand, if $u_{N}=Eu_{N}\neq 0$, i.e., $u_{N}$ is deterministic controller, then \eqref{barjn1} can be reduced to
\begin{equation*}
  J(N)=u_{N}'\Upsilon_{N}^{(2)}u_{N}>0.
\end{equation*}
Similarly, it holds from Lemma \ref{lemma01} and Remark \ref{rem1} that $\Upsilon_{N}^{(2)}>0$.

Further the optimal controller $u_{N}$ is to be calculated as follows.

Using \eqref{ps1} and \eqref{th31}, from \eqref{th33} with $k$ replaced by $N$, we have that
\begin{align}\label{nc2}
&0\hspace{-1mm}=\hspace{-1mm}E\Big\{
R_{N}u_{N}\hspace{-1mm}+\hspace{-1mm}\bar{R}_{N}Eu_{N}\hspace{-1mm}+\hspace{-1mm}\left[\hspace{-1mm}
  \begin{array}{cc}
   B_{N}+w_{N}D_{N} \\
    0     \\
  \end{array}
\hspace{-1mm}\right]'\lambda_{N}\hspace{-1mm}\notag\\
&~~~~~~~~~+\hspace{-1mm}E\left[\hspace{-1mm}\left[\hspace{-1mm}
  \begin{array}{cc}
   \bar{B}_{N}+w_{N}\bar{D}_{N} \\
    B_{N}+\bar{B}_{N}          \\
  \end{array}
\hspace{-1mm}\right]'\lambda_{N}\right]\hspace{-1mm}\Big| {\mathcal F}_{N-1}\hspace{-1mm}\Big\}\notag\\
&=E\Big\{R_{N}u_{N}+\bar{R}_{N}Eu_{N}\notag\\
&~~~+(B_{N}+w_{N}D_{N})'(P_{N+1}x_{N+1}+\bar{P}_{N+1}^{(1)}Ex_{N+1})\notag\\
&~~~+E[(\bar{B}_{N}\hspace{-1mm}+\hspace{-1mm}w_{N}\bar{D}_{N})'(P_{N+1}x_{N+1}\hspace{-1mm}+\hspace{-1mm}\bar{P}_{N+1}^{(1)}Ex_{N+1})]\notag\\
&~~~+E[(B_{N}\hspace{-1mm}+\hspace{-1mm}\bar{B}_{N})'(\bar{P}_{N+1}^{(2)}x_{N+1}\hspace{-1mm}+\hspace{-1mm}\bar{P}_{N+1}^{(3)}Ex_{N+1})]\Big|\mathcal{F}_{N-1}\Big\}\notag\\
&=(R_{N}+B_{N}'P_{N+1}B_{N}+\sigma^{2}D_{N}'P_{N+1}D_{N})u_{N}\notag\\
&~~~+\Big[\bar{R}_{N}+B_{N}'P_{N+1}\bar{B}_{N}+\sigma^{2}D_{N}'P_{N+1}\bar{D}_{N}\notag\\
&~~~+B_{N}'\bar{P}_{N+1}^{(1)}(B_{N}+\bar{B}_{N})+\bar{B}_{N}'\bar{P}_{N+1}^{(1)}(B_{N}\hspace{-1mm}+\hspace{-1mm}\bar{B}_{N})\notag\\
&~~~+\bar{B}_{N}'P_{N+1}B_{N}+\sigma^{2}\bar{D}_{N}'P_{N+1}D_{N}\notag\\
&~~~+\bar{B}_{N}'P_{N+1}\bar{B}_{N}+\sigma^{2}\bar{D}_{N}'P_{N+1}\bar{D}_{N}\notag\\
&~~~+(B_{N}+\bar{B}_{N})'(\bar{P}_{N+1}^{(2)}+\bar{P}_{N+1}^{(3)})(B_{N}+\bar{B}_{N})\Big]Eu_{N}\notag\\
&~~~+(B_{N}'P_{N+1}A_{N}+\sigma^{2}D_{N}'P_{N+1}C_{N})x_{N}\notag\\
&~~~+\Big[B_{N}'P_{N+1}\bar{A}_{N}+\sigma^{2}D_{N}'P_{N+1}\bar{C}_{N}\notag\\
&~~~+B_{N}'\bar{P}_{N+1}^{(1)}(A_{N}+\bar{A}_{N})+\bar{B}_{N}'\bar{P}_{N+1}^{(1)}(A_{N}\hspace{-1mm}+\hspace{-1mm}\bar{A}_{N})\notag\\
&~~~+\bar{B}_{N}'P_{N+1}A_{N}+\sigma^{2}\bar{D}_{N}'P_{N+1}C_{N}\notag\\
&~~~+\bar{B}_{N}'P_{N+1}\bar{A}_{N}+\sigma^{2}\bar{D}_{N}'P_{N+1}\bar{C}_{N}\notag\\
&~~~+(B_{N}+\bar{B}_{N})'(\bar{P}_{N+1}^{(2)}\hspace{-1mm}+\hspace{-1mm}\bar{P}_{N+1}^{(3)})(A_{N}\hspace{-1mm}+\hspace{-1mm}\bar{A}_{N})\Big]Ex_{N}.
\end{align}

Note that $\bar{P}_{N+1}^{(1)}+\bar{P}_{N+1}^{(2)}+\bar{P}_{N+1}^{(3)}=\bar{P}_{N+1}$, it follows from \eqref{nc2} that
\begin{align}
  0&=\Upsilon_{N}^{(1)}u_{N}+[\Upsilon_{N}^{(2)}-\Upsilon_{N}^{(1)}]Eu_{N}\notag\\
  &~~+M_{N}^{(1)}x_{N}+[M_{N}^{(2)}-M_{N}^{(1)}]Ex_{N},\label{nc3}
\end{align}
where $\Upsilon_{N}^{(1)}, \Upsilon_{N}^{(2)}, M_{N}^{(1)},M_{N}^{(2)}$ are given by \eqref{upsi1}-\eqref{h2} for $k=N$.

Therefore, taking expectations on both sides of \eqref{nc3}, we have
\begin{equation}\label{nc03}
  \Upsilon_{N}^{(2)}Eu_{N}+M_{N}^{(2)}Ex_{N}=0.
\end{equation}

Since $\Upsilon_{N}^{(1)}$, and $\Upsilon_{N}^{(2)}$ has been proved to be strictly positive, thus $Eu_{N}$ can be presented as
\begin{equation}\label{nc003}
  Eu_{N}=-[\Upsilon_{N}^{(2)}]^{-1} M_{N}^{(2)}Ex_{N}.
\end{equation}

By plugging \eqref{nc003} into \eqref{nc3}, the optimal controller $u_{N}$ given by \eqref{th43} with $k=N$ can be verified.

Next we will show $\lambda_{N-1}$ has the form of \eqref{th4} associated with \eqref{th41}-\eqref{th42} for $k=N$.

Notice \eqref{th31} and \eqref{th32}, we have that
\begin{align}\label{nc8}
  \lambda_{N-1}&=E\Big\{\left[\hspace{-1mm}
  \begin{array}{cc}
    Q_{N}x_{N}+\bar{Q}_{N}Ex_{N}\\
    0               \\
  \end{array}
\hspace{-1mm}\right]\hspace{-1mm}\notag\\
&~+\hspace{-1mm}\left[\hspace{-1mm}
  \begin{array}{cc}
    A_{N}+w_{N}C_{N}&\bar{A}_{N}+w_{N}\bar{C}_{N}\\
    0    & A_{N}+\bar{A}_{N}     \\
  \end{array}
\hspace{-1mm}\right]'\lambda_{N}\Big|\mathcal{F}_{N-1}\Big\}\notag\\
&=\hspace{-1mm}E\Big\{\left[\hspace{-1mm}
  \begin{array}{cc}
    Q_{N}x_{N}+\bar{Q}_{N}Ex_{N}\\
    0             \\
  \end{array}
\hspace{-1mm}\right]\hspace{-1mm}\notag\\
&~+\hspace{-1mm}\left[\hspace{-1mm}
  \begin{array}{cc}
    A_{N}\hspace{-1mm}+\hspace{-1mm}w_{N}C_{N}&
   \bar{A}_{N}\hspace{-1mm}+\hspace{-1mm}w_{N}\bar{C}_{N}\\
    0       & A_{N}\hspace{-1mm}+\hspace{-1mm}\bar{A}_{N}      \\
  \end{array}
\hspace{-1mm}\right]'\hspace{-1mm}\left[\hspace{-1mm}
  \begin{array}{cc}
   P_{N+1}& \bar{P}_{N+1}^{(1)}\\
    \bar{P}_{N+1}^{(2)}& \bar{P}_{N+1}^{(3)}     \\
  \end{array}
\hspace{-1mm}\right]\notag\\
&~~~\times\left[
  \begin{array}{cc}
    x_{N+1}\\
    Ex_{N+1}              \\
  \end{array}
\right]\hspace{-1mm}\Big|\mathcal{F}_{N-1}\Big\}.
\end{align}

By using the optimal controller \eqref{th43} and the system dynamics \eqref{ps1}, each element of $\lambda_{N-1}$ can be calculated as follows,
\begin{align}
  &E[(A_{N}+w_{N}C_{N})'P_{N+1}x_{N+1}|\mathcal{F}_{N-1}]\notag\\
  &=\Big(A_{N}'P_{N+1}A_{N}+\sigma^{2}C_{N}'P_{N+1}C_{N}\notag\\
  &~~~+A_{N}'P_{N+1}B_{N}K_{N}+\sigma^{2}C_{N}'P_{N+1}D_{N}K_{N}\Big)x_{N}\notag\\
  &~~~+\Big[A_{N}'P_{N+1}\bar{A}_{N}+\sigma^{2}C_{N}'P_{N+1}\bar{C}_{N}\notag\\
  &~~~+A_{N}'P_{N+1}B_{N}\bar{K}_{N}+\sigma^{2}C_{N}'P_{N+1}D_{N}\bar{K}_{N}\notag\\
  &~~~+A_{N}'P_{N+1}\bar{B}_{N}(K_{N}+\bar{K}_{N})\notag\\
  &~~~+\sigma^{2}C_{N}'P_{N+1}\bar{D}_{N}(K_{N}+\bar{K}_{N})\Big ]Ex_{N}, \label{nc9}\\
  &E[(A_{N}+w_{N}C_{N})'\bar{P}_{N+1}^{(1)}Ex_{N+1}|\mathcal{F}_{N-1}]\notag\\
  &=\Big\{A_{N}'\bar{P}_{N+1}^{(1)}(A_{N}+\bar{A}_{N})\notag\\
  &~~~+A_{N}'\bar{P}_{N+1}^{(1)}(B_{N}+\bar{B}_{N})(K_{N}+\bar{K}_{N})\Big\}Ex_{N},\label{nc90}\\
&E\Big\{\big[(\bar{A}_{N}\hspace{-1mm}+\hspace{-1mm}w_{N}\bar{C}_{N})'P_{N+1}\notag\\
&~~~+(A_{N}+\bar{A}_{N})'\bar{P}_{N+1}^{(2)}\big]x_{N+1}\Big|\mathcal{F}_{N-1}\Big\}\notag\\
  &=\Big\{\bar{A}_{N}'P_{N+1}A_{N}+\sigma^{2}\bar{C}_{N}'P_{N+1}C_{N}\notag\\
  &~~~+\bar{A}_{N}'P_{N+1}B_{N}K_{N}+\sigma^{2}\bar{C}_{N}'P_{N+1}D_{N}K_{N}\notag\\
  &~~~+(A_{N}+\bar{A}_{N})'\bar{P}_{N+1}^{(2)}A_{N}\notag\\
  &~~~+(A_{N}+\bar{A}_{N})'\bar{P}_{N+1}^{(2)}B_{N}K_{N}\Big\}x_{N}\notag\\
  &~~~+\Big\{\bar{A}_{N}'P_{N+1}\bar{A}_{N}+\sigma^{2}\bar{C}_{N}'P_{N+1}\bar{C}_{N}\notag\\
  &~~~+\bar{A}_{N}'P_{N+1}B_{N}\bar{K}_{N}+\sigma^{2}\bar{C}_{N}'P_{N+1}D_{N}\bar{K}_{N}\notag\\
  &~~~+\bar{A}_{N}'P_{N+1}\bar{B}_{N}(K_{N}+\bar{K}_{N})\notag\\
  &~~~+\sigma^{2}\bar{C}_{N}'P_{N+1}\bar{D}_{N}(K_{N}+\bar{K}_{N})\notag\\
  &~~~+(A_{N}+\bar{A}_{N})'\bar{P}_{N+1}^{(2)}B_{N}\bar{K}_{N}\notag\\
  &~~~+(A_{N}+\bar{A}_{N})'\bar{P}_{N+1}^{(2)}\bar{B}_{N}(K_{N}+\bar{K}_{N})\notag\\
  &~~~+(A_{N}+\bar{A}_{N})'\bar{P}_{N+1}^{(2)}\bar{A}_{N}\Big\}Ex_{N},\label{nc10}
\end{align}
and
\begin{align}\label{nc101}
&E\{[(\bar{A}_{N}\hspace{-1mm}+\hspace{-1mm}w_{N}\bar{C}_{N})'\bar{P}_{N+1}^{(1)}\notag\\
&~~~+(A_{N}+\bar{A}_{N})'\bar{P}_{N+1}^{(3)}]Ex_{N+1}|\mathcal{F}_{N-1}\}\notag\\
&=\Big\{\bar{A}_{N}'\bar{P}_{N+1}^{(1)}(A_{N}+\bar{A}_{N})\notag\\
&~~~+\bar{A}_{N}'\bar{P}_{N+1}^{(1)}(B_{N}+\bar{B}_{N})(K_{N}+\bar{K}_{N})\notag\\
&~~~+(A_{N}+\bar{A}_{N})'\bar{P}_{N+1}^{(3)}(B_{N}+\bar{B}_{N})(K_{N}+\bar{K}_{N})\notag\\
&~~~+(A_{N}+\bar{A}_{N})'\bar{P}_{N+1}^{(3)}(A_{N}+\bar{A}_{N})\Big\}Ex_{N}.
\end{align}

By plugging \eqref{nc9}-\eqref{nc101} into \eqref{nc8}, we know that $\lambda_{N-1}$ is given as,
\begin{equation}\label{nc08}
\lambda_{N-1}=\left[
  \begin{array}{cc}
    P_{N}&\bar{P}_{N}^{(1)}\\
    \bar{P}_{N}^{(2)}  &\bar{P}_{N}^{(3)}     \\
  \end{array}
\right]\left[
  \begin{array}{cc}
    x_{N}\\
    Ex_{N}             \\
  \end{array}
\right],
\end{equation}
where $\bar{P}_{N}^{(1)}$, $\bar{P}_{N}^{(2)}$, $\bar{P}_{N}^{(3)}$ are respectively calculated in the following,
\begin{align}
  \bar{P}_{N}^{(1)}&=\bar{Q}_{N}+A_{N}'P_{N+1}\bar{A}_{N}+\sigma^{2}C_{N}'P_{N+1}\bar{C}_{N}\notag\\
  &~~~+A_{N}'P_{N+1}B_{N}\bar{K}_{N}+\sigma^{2}C_{N}'P_{N+1}D_{N}\bar{K}_{N}\notag\\
  &~~~+A_{N}'P_{N+1}\bar{B}_{N}(K_{N}+\bar{K}_{N})\notag\\
  &~~~+\sigma^{2}C_{N}'P_{N+1}\bar{D}_{N}(K_{N}+\bar{K}_{N})\notag\\
  &~~~+A_{N}'\bar{P}_{N+1}^{(1)}(A_{N}+\bar{A}_{N}) \notag\\
  &~~~+A_{N}'\bar{P}_{N+1}^{(1)}(B_{N}+\bar{B}_{N})(K_{N}+\bar{K}_{N}),\label{pk1}\\
  \bar{P}_{N}^{(2)}&=\bar{A}_{N}'P_{N+1}A_{N}+\sigma^{2}\bar{C}_{N}'P_{N+1}C_{N}\notag\\
  &~~~+\bar{A}_{N}'P_{N+1}B_{N}K_{N}+\sigma^{2}\bar{C}_{N}'P_{N+1}D_{N}K_{N}\notag\\
  &~~~+(A_{N}\hspace{-1mm}+\hspace{-1mm}\bar{A}_{N})'\bar{P}_{N+1}^{(2)}A_{N}\hspace{-1mm}+\hspace{-1mm}
  (A_{N}\hspace{-1mm}+\hspace{-1mm}\bar{A}_{N})'\bar{P}_{N+1}^{(2)}B_{N}K_{N},\label{pk2}\\
  \bar{P}_{N}^{(3)}&=\bar{A}_{N}'P_{N+1}\bar{A}_{N}+\sigma^{2}\bar{C}_{N}'P_{N+1}\bar{C}_{N}\notag\\
  &~~~+\bar{A}_{N}'P_{N+1}B_{N}\bar{K}_{N}+\sigma^{2}\bar{C}_{N}'P_{N+1}D_{N}\bar{K}_{N}\notag\\
  &~~~+\bar{A}_{N}'P_{N+1}\bar{B}_{N}(K_{N}+\bar{K}_{N})\notag\\
  &~~~+\sigma^{2}\bar{C}_{N}'P_{N+1}\bar{D}_{N}(K_{N}+\bar{K}_{N})\notag\\
  &~~~+(A_{N}+\bar{A}_{N})'\bar{P}_{N+1}^{(2)}\bar{A}_{N}\notag\\
  &~~~+(A_{N}+\bar{A}_{N})'\bar{P}_{N+1}^{(2)}B_{N}\bar{K}_{N}\notag\\
  &~~~+(A_{N}+\bar{A}_{N})'\bar{P}_{N+1}^{(2)}\bar{B}_{N}(K_{N}+\bar{K}_{N})\notag\\
  &~~~+\bar{A}_{N}'\bar{P}_{N+1}^{(1)}(A_{N}+\bar{A}_{N})\notag\\
  &~~~+\bar{A}_{N}'\bar{P}_{N+1}^{(1)}(B_{N}+\bar{B}_{N})(K_{N}+\bar{K}_{N})\notag\\
  &~~~+(A_{N}+\bar{A}_{N})'\bar{P}_{N+1}^{(3)}(B_{N}+\bar{B}_{N})(K_{N}+\bar{K}_{N})\notag\\
  &~~~+(A_{N}+\bar{A}_{N})'\bar{P}_{N+1}^{(3)}(A_{N}+\bar{A}_{N}),\label{pk3}
\end{align}
with $\bar{P}_{N+1}^{(1)}=\bar{P}_{N+1}$, $\bar{P}_{N+1}^{(2)}=\bar{P}_{N+1}^{(3)}=0$.

Similarly, $P_N$ is given as
\begin{align}\label{pn}
  P_{N}&=Q_{N}+A_{N}'P_{N+1}A_{N}+\sigma^{2}C_{N}'P_{N+1}C_{N}\notag\\
  &~~~+A_{N}'P_{N+1}B_{N}K_{N}+\sigma^{2}C_{N}'P_{N+1}D_{N}K_{N}\notag\\
  &=Q_{N}+A_{N}'P_{N+1}A_{N}+\sigma^{2}C_{N}'P_{N+1}C_{N}\notag\\
  &~~~-(A_{N}'P_{N+1}B_{N}+\sigma^{2}C_{N}'P_{N+1}D_{N})[\Upsilon_{N}^{(1)}]^{-1}M_{N}^{(1)}\notag\\
  &=Q_{N}+A_{N}'P_{N+1}A_{N}+\sigma^{2}C_{N}'P_{N+1}C_{N}\notag\\
  &~~~-[M_{N}^{(1)}]'[\Upsilon_{N}^{(1)}]^{-1}M_{N}^{(1)},
\end{align}
which is exactly \eqref{th41} for $k=N$. Now we show $\bar{P}_{N}=\bar{P}_{N}^{(1)}+\bar{P}_{N}^{(2)}+\bar{P}_{N}^{(3)}$ obeys \eqref{th42}. In fact, it holds from \eqref{pk1}-\eqref{pn} that
\begin{align}\label{barpn}
  &\bar{P}_{N}=\bar{P}_{N}^{(1)}+\bar{P}_{N}^{(2)}+\bar{P}_{N}^{(3)}\notag\\
  &=\bar{Q}_{N}+A_{N}'P_{N+1}\bar{A}_{N}+\sigma^{2}C_{N}'P_{N+1}\bar{C}_{N}\notag\\
  &~~~+\bar{A}_{N}'P_{N+1}\bar{A}_{N}+\sigma^{2}\bar{C}_{N}'P_{N+1}\bar{C}_{N}\notag\\
  &~~~+\bar{A}_{N}'P_{N+1}A_{N}+\sigma^{2}\bar{C}_{N}'P_{N+1}C_{N}\notag\\
  &~~~+\Big[A_{N}'P_{N+1}B_{N}+\sigma^{2}C_{N}'P_{N+1}D_{N}\notag\\
  &~~~+\bar{A}_{N}'P_{N+1}B_{N}+\sigma^{2}\bar{C}_{N}'P_{N+1}D_{N}\notag\\
  &~~~+A_{N}'P_{N+1}\bar{B}_{N}+\sigma^{2}C_{N}'P_{N+1}\bar{D}_{N}\notag\\
  &~~~+\bar{A}_{N}'P_{N+1}\bar{B}_{N}+\sigma^{2}\bar{C}_{N}'P_{N+1}\bar{D}_{N}\notag\\
  &~~~+(A_{N}+\bar{A}_{N})'\bar{P}_{N+1}(A_{N}+\bar{A}_{N})\Big](K_{N}+\bar{K}_{N})\notag\\
  &~~~-(A_{N}'P_{N+1}B_{N}+\sigma^{2}C_{N}'P_{N+1}D_{N})K_{N}\notag\\
  &=\bar{Q}_{N}+A_{N}'P_{N+1}\bar{A}_{N}+\sigma^{2}C_{N}'P_{N+1}\bar{C}_{N}\notag\\
  &~~~+\bar{A}_{N}'P_{N+1}\bar{A}_{N}+\sigma^{2}\bar{C}_{N}'P_{N+1}\bar{C}_{N}\notag\\
  &~~~+\bar{A}_{N}'P_{N+1}A_{N}+\sigma^{2}\bar{C}_{N}'P_{N+1}C_{N}\notag\\
   &~~~+(A_{N}+\bar{A}_{N})'\bar{P}_{N+1}(A_{N}+\bar{A}_{N})\notag\\
  &~~~+[M_{N}^{(1)}]'[\Upsilon_{N}^{(1)}]^{-1}M_{N}^{(1)}-[M_{N}^{(2)}]'[\Upsilon_{N}^{(2)}]^{-1}M_{N}^{(2)}.
\end{align}
where $\bar{P}_{N+1}^{(1)}+\bar{P}_{N+1}^{(2)}+\bar{P}_{N+1}^{(3)}=\bar{P}_{N+1}$ has been inserted to the second equality of \eqref{barpn}.

Thus, \eqref{th4} associated with \eqref{th41}-\eqref{th42} have been verified for $k=N$.

Therefore we have shown the necessity for $k=N$ in the above. To complete the induction,  take $0\leq l\leq N$, for any $k\geq l+1$, we assume that:
\begin{itemize}
\item  $\Upsilon_{k}^{(1)}$ and $\Upsilon_{k}^{(2)}$ in \eqref{upsi1} and \eqref{upsi2} are all strictly positive;
\item  The costate $\lambda_{k-1}$ is given by \eqref{th4}, $P_{k}$ satisfies \eqref{th41} and $\bar{P}_{k}^{(1)}$, $\bar{P}_{k}^{(2)}$, $\bar{P}_{k}^{(3)}$ satisfy \eqref{pk1}-\eqref{pk3} with $N$ replaced by $k$, respectively. Furthermore, $\bar{P}_{k}^{(1)}+\bar{P}_{k}^{(2)}+\bar{P}_{k}^{(3)}=\bar{P}_{k}$ and $\bar{P}_{k}$ obeys \eqref{th42};
\item  The optimal controller $u_{k}$ is as in \eqref{th43}.
\end{itemize}

We will show the above statements  are also true for $k=l$.

Firstly, we show $\Upsilon_{l}^{(1)}$ and $\Upsilon_{l}^{(2)}$ are positive definite if {\em Problem 1} has a unique solution.

By applying the maximum principle \eqref{th33}-\eqref{th32} and \eqref{ps1}, we can obtain
\begin{align*}
  &~~E\Big\{\left[\hspace{-1mm}
  \begin{array}{cc}
    x_{k}\\
    Ex_{k}\\
  \end{array}
\hspace{-1mm}\right]'\lambda_{k-1}-\left[\hspace{-1mm}
  \begin{array}{cc}
    x_{k+1}\\
    Ex_{k+1}\\
  \end{array}
\hspace{-1mm}\right]'\lambda_{k}\Big\}\\
&\hspace{-1mm}=\hspace{-1mm} E\Big\{\hspace{-1mm}\left[\hspace{-1mm}
  \begin{array}{cc}
     x_{k} \\
    Ex_{k} \\
  \end{array}
\hspace{-1mm}\right]'E\Big\{\hspace{-1mm}\left[\hspace{-1mm}
  \begin{array}{cc}
     A_{k}+w_{k}C_{k} &  \bar{A}_{k}+w_{k}\bar{C}_{k} \\
    0      &  A_{k}+\bar{A}_{k}       \\
  \end{array}
\hspace{-1mm}\right]'\hspace{-1mm}\lambda_{k}\Big|\mathcal{F}_{k-1}\Big\}\\
&+\left[\hspace{-1mm}
  \begin{array}{cc}
    x_{k} \\
    Ex_{k}  \\
  \end{array}
\hspace{-1mm}\right]'\left[\hspace{-1mm}
  \begin{array}{cc}
    Q_{k}x_{k}+\bar{Q}_{k}Ex_{k} \\
    0     \\
  \end{array}
\hspace{-1mm}\right]\\
&-\left[\hspace{-1mm}
  \begin{array}{cc}
     x_{k} \\
    Ex_{k}  \\
  \end{array}
\hspace{-1mm}\right]'\left[\hspace{-1mm}
  \begin{array}{cc}
    A_{k}+w_{k}C_{k} & \bar{A}_{k}+w_{k}\bar{C}_{k} \\
    0     &  A_{k}+\bar{A}_{k}       \\
  \end{array}
\hspace{-1mm}\right]'\lambda_{k}\\
&-\left[\hspace{-1mm}
  \begin{array}{cc}
    u_{k} \\
    Eu_{k} \\
  \end{array}
\hspace{-1mm}\right]'\left[\hspace{-1mm}
  \begin{array}{cc}
    B_{k}+w_{k}D_{k} &  \bar{B}_{k}+w_{k}\bar{D}_{k} \\
    0     & B_{k}+\bar{B}_{k}       \\
  \end{array}
\hspace{-1mm}\right]'\lambda_{k}
\hspace{-1mm}\Big\}\\
&\hspace{-1mm}=\hspace{-1mm} E\Big\{\hspace{-1mm}\left[\hspace{-1mm}
  \begin{array}{cc}
    x_{k} \\
    Ex_{k} \\
  \end{array}
\hspace{-1mm}\right]'E\Big\{\hspace{-1mm}\left[\hspace{-1mm}
  \begin{array}{cc}
    A_{k}+w_{k}C_{k} &  \bar{A}_{k}+w_{k}\bar{C}_{k} \\
    0   &  A_{k}+\bar{A}_{k}       \\
  \end{array}
\hspace{-1mm}\right]'\hspace{-1mm}\lambda_{k}\Big|\mathcal{F}_{k-1}\Big\}\\
&+\left[\hspace{-1mm}
  \begin{array}{cc}
    x_{k} \\
    Ex_{k}  \\
  \end{array}
\hspace{-1mm}\right]'\left[
  \begin{array}{cc}
   Q_{k}x_{k}+\bar{Q}_{k}Ex_{k} \\
    0            \\
  \end{array}
\hspace{-1mm}\right]\\
&-\left[
  \begin{array}{cc}
     x_{k} \\
    Ex_{k}  \\
  \end{array}
\hspace{-1mm}\right]'\left[\hspace{-1mm}
  \begin{array}{cc}
    A_{k}+w_{k}C_{k} &  \bar{A}_{k}+w_{k}\bar{C}_{k} \\
    0       &  A_{k}+\bar{A}_{k}      \\
  \end{array}
\hspace{-1mm}\right]'\lambda_{k}\\
&\hspace{-1mm}-\hspace{-1mm}u_{k}'\left[
  \begin{array}{cc}
     B_{k}+w_{k}D_{k} \\
    0     \\
  \end{array}
\hspace{-1mm}\right]'\lambda_{k}\hspace{-1mm}-\hspace{-1mm}u_{k}'E\Big\{\left[\hspace{-1mm}
  \begin{array}{cc}
     \bar{B}_{k}+w_{k}\bar{D}_{k}\\
    B_{k}+\bar{B}_{k}       \\
  \end{array}
\hspace{-1mm}\right]'\lambda_{k}\Big\}\Big\}\\
&\hspace{-1mm}=\hspace{-1mm}E\hspace{-1mm}\Big\{\hspace{-1mm}\left[\hspace{-1mm}
  \begin{array}{cc}
     x_{k} \\
    Ex_{k}  \\
  \end{array}
\hspace{-1mm}\right]'\hspace{-1mm}\left[\hspace{-1mm}
  \begin{array}{cc}
     Q_{k}x_{k}\hspace{-1mm}+\hspace{-1mm}\bar{Q}_{k}Ex_{k} \\
    0            \\
  \end{array}
\hspace{-1mm}\right]
\hspace{-1mm}\Big\}\hspace{-1mm}+\hspace{-1mm}E(u_{k}'R_{k}u_{k}\hspace{-1mm}+\hspace{-1mm}Eu_{k}'\bar{R}_{k}Eu_{k})\\
&=E(x_{k}'Q_{k}x_{k}\hspace{-1mm}+\hspace{-1mm}Ex_{k}'\bar{Q}_{k}Ex_{k}\hspace{-1mm}+\hspace{-1mm}u_{k}'R_{k}u_{k}\hspace{-1mm}+\hspace{-1mm}Eu_{k}'\bar{R}_{k}Eu_{k}).
    \end{align*}

Adding from $k=l+1$ to $k=N$ on both sides of the above equation, we have
\begin{align*}
  &E\Big\{\hspace{-1mm} \left[\hspace{-1mm}
  \begin{array}{cc}
     x_{l+1} \\
    Ex_{l+1} \\
  \end{array}
\hspace{-1mm}\right]'\hspace{-1mm}\lambda_{l}\hspace{-1mm}
-\hspace{-1mm}x_{N+1}'P_{N+1}x_{N+1}\hspace{-1mm}-\hspace{-1mm}Ex_{N+1}'P_{N+1}Ex_{N+1}\Big\}\\
&\hspace{-1mm}=\hspace{-1mm}\sum_{k=l+1}^{N}\hspace{-1mm}E(x_{k}'Q_{k}x_{k}\hspace{-1mm}+\hspace{-1mm}Ex_{k}'\bar{Q}_{k}Ex_{k}\hspace{-1mm}+\hspace{-1mm}u_{k}'R_{k}u_{k}\hspace{-1mm}+\hspace{-1mm}Eu_{k}'\bar{R}_{k}Eu_{k}).
\end{align*}

Thus, it follows from \eqref{barjk} that
\begin{align}\label{jnnn}
 & J(l)= E(x_{l}'Q_{l}x_{l}+Ex_{l}'\bar{Q}_{l}Ex_{l}+u_{l}'R_{l}u_{l}+Eu_{l}'\bar{R}_{l}Eu_{l})\notag\\
  &+\sum_{k=l+1}^{N}\hspace{-1mm}E(x_{k}'Q_{k}x_{k}\hspace{-1mm}+\hspace{-1mm}Ex_{k}'\bar{Q}_{k}Ex_{k}\hspace{-1mm}+\hspace{-1mm}u_{k}'R_{k}u_{k}\hspace{-1mm}+\hspace{-1mm}Eu_{k}'\bar{R}_{k}Eu_{k})\notag\\
  &+E(x_{N+1}'P_{N+1}x_{N+1}\hspace{-1mm}+\hspace{-1mm}Ex_{N+1}'P_{N+1}Ex_{N+1})\notag\\
  &=E\Big\{x_{l}'Q_{l}x_{l}+Ex_{l}'\bar{Q}_{l}Ex_{l}+u_{l}'R_{l}u_{l}+Eu_{l}'\bar{R}_{l}Eu_{l}\notag\\
  &+\left[\hspace{-1mm}
  \begin{array}{cc}
     x_{l+1} \\
    Ex_{l+1}              \\
  \end{array}
\hspace{-1mm}\right]'\lambda_{l}\Big\},
\end{align}

Note that \eqref{th4} is assumed to be true for $k=l+1$, i.e.,
\begin{align}\label{ldan}
\lambda_{l}&=\left[
  \begin{array}{cc}
     P_{l+1} & \bar{P}_{l+1}^{(1)} \\
    \bar{P}_{l+1}^{(2)}   & \bar{P}_{l+1}^{(3)}       \\
  \end{array}
 \right]\left[
  \begin{array}{cc}
     x_{l+1} \\
    Ex_{l+1}              \\
  \end{array}
 \right],
\end{align}
where $P_{l+1}$ follows the iteration \eqref{th41} and $\bar{P}_{l+1}^{(1)}$, $\bar{P}_{l+1}^{(2)}$, $\bar{P}_{l+1}^{(3)}$ is calculated as \eqref{pk1}-\eqref{pk3} with $N$ replaced by $l+1$, respectively, and $\bar{P}_{l+1}^{(1)}+\bar{P}_{l+1}^{(2)}+\bar{P}_{l+1}^{(3)}=\bar{P}_{l+1}$, where $\bar{P}_{l+1}$ is given as \eqref{th42}.

By substituting \eqref{ldan} into \eqref{jnnn} and using the system dynamics \eqref{ps1}, $J(l)$ can be calculated as
\begin{align}\label{jk0}
&~~J(l)\notag\\
&=E(x_{l}'Q_{l}x_{l}+Ex_{l}'\bar{Q}_{l}Ex_{l}+u_{l}'R_{l}u_{l}+Eu_{l}'\bar{R}_{l}Eu_{l}\notag\\
&+x_{l+1}'P_{l+1}x_{l+1}+Ex_{l+1}'\bar{P}_{l+1}Ex_{l+1})\notag\\
&=E\Big\{x_{l}'\left(Q_{l}+A_{l}'P_{l+1}A_{l}+\sigma^{2}C_{l}'P_{l+1}C_{l}\right)x_{l}\notag\\
&+Ex_{l}'\Big[\bar{Q}_{l}+A_{l}'P_{l+1}\bar{A}_{l}+\sigma^{2}C_{l}'P_{l+1}\bar{C}_{l}+\bar{A}_{l}'P_{l+1}A_{l}\notag\\
&~~~~+\sigma^{2}\bar{C}_{l}'P_{l+1}C_{l}+\bar{A}_{l}'P_{l+1}\bar{A}_{l}+\sigma^{2}\bar{C}_{l}'P_{l+1}\bar{C}_{l}\notag\\
&~~~~+(A_{l}+\bar{A}_{l})'\bar{P}_{l+1}(A_{l}+\bar{A}_{l})\Big]Ex_{l}\notag\\
&+x_{l}'\left(A_{l}'P_{l+1}B_{l}+\sigma^{2}C_{l}'P_{l+1}D_{l}\right)u_{l}\notag\\
&+u_{l}'\left(B_{l}'P_{l+1}A_{l}+\sigma^{2}D_{l}'P_{l+1}C_{l}\right)x_{l}\notag\\
&+Ex_{l}'\Big[A_{l}'P_{l+1}\bar{B}_{l}+\sigma^{2}C_{l}'P_{l+1}\bar{D}_{l}+\bar{A}_{l}'P_{l+1}B_{l}\notag\\
&~~~~+\sigma^{2}\bar{C}_{l}'P_{l+1}D_{l}+\bar{A}_{l}'P_{l+1}\bar{B}_{l}+\sigma^{2}\bar{C}_{l}'P_{l+1}\bar{D}_{l}\notag\\
&~~~~+(A_{l}+\bar{A}_{l})'\bar{P}_{l+1}(B_{l}+\bar{B}_{l})\Big]Eu_{l}\notag\\
&+Eu_{l}'\Big[B_{l}'P_{l+1}\bar{A}_{l}+\sigma^{2}D_{l}'P_{l+1}\bar{C}_{l}+\bar{B}_{l}'P_{l+1}A_{l}\notag\\
&~~~~+\sigma^{2}\bar{D}_{l}'P_{l+1}C_{l}+\bar{B}_{l}'P_{l+1}\bar{A}_{l}+\sigma^{2}\bar{D}_{l}'P_{l+1}\bar{C}_{l}\notag\\
&~~~~+(B_{l}+\bar{B}_{l})'\bar{P}_{l+1}(A_{l}+\bar{A}_{l})\Big]Ex_{l}\notag\\
&+u_{l}'\left(R_{l}+B_{l}'P_{l+1}B_{l}+\sigma^{2}D_{l}'P_{l+1}D_{l}\right)u_{l}\notag\\
&+Eu_{l}'\Big[B_{l}'P_{l+1}\bar{B}_{l}+\sigma^{2}D_{l}'P_{l+1}\bar{D}_{l}+\bar{B}_{l}'P_{l+1}B_{l}\notag\\
&~~~~+\sigma^{2}\bar{D}_{l}'P_{l+1}D_{l}+\bar{B}_{l}'P_{l+1}\bar{B}_{l}+\sigma^{2}\bar{D}_{l}'P_{l+1}\bar{D}_{l}\notag\\
&~~~~+\bar{R}_{l}+(B_{l}+\bar{B}_{l})'\bar{P}_{l+1}(B_{l}+\bar{B}_{l})\Big]Eu_{l}\Big\}\notag\\
&=E(x_{l}'P_{l}x_{l}+Ex_{l}'\bar{P}_{l}Ex_{l})\notag\\
&+E\Big\{[u_{l}-Eu_{l}-K_{l}(x_{l}-Ex_{l})]'\Upsilon_{l}^{(1)}\notag\\
  &~~~~~\times[u_{l}-Eu_{l}-K_{l}(x_{l}-Ex_{l})]\Big\}\notag\\
&+[Eu_{l}\hspace{-1mm}-\hspace{-1mm}(K_{l}\hspace{-1mm}+\hspace{-1mm}\bar{K}_{l})Ex_{l}]'
\Upsilon_{l}^{(2)} [Eu_{l}\hspace{-1mm}-\hspace{-1mm}(K_{l}\hspace{-1mm}+\hspace{-1mm}\bar{K}_{l})Ex_{l}],
\end{align}
where $\Upsilon_{l}^{(1)}$ and $\Upsilon_{l}^{(2)}$ are respectively given by \eqref{upsi1} and \eqref{upsi2} for $k=l$.

Equation \eqref{barjk} indicates that $x_{l}$ is the initial state in minimizing $J(l)$. Now we show $\Upsilon_{l}^{(1)}>0$ and $\Upsilon_{l}^{(2)}>0$. We choose $x_{l}=0$, then \eqref{jk0} becomes
\begin{align}\label{zhang1}
J(l)\hspace{-1mm}=\hspace{-1mm}E\left\{(u_{l}\hspace{-1mm}-\hspace{-1mm}Eu_{l})'
\Upsilon_{l}^{(1)}(u_{l}\hspace{-1mm}-\hspace{-1mm}Eu_{l})\hspace{-1mm}+\hspace{-1mm}Eu_{l}'\Upsilon_{l}^{(2)}Eu_{l}\right\}.
\end{align}

It follows from Assumption \ref{ass1} that the minimum of $J(l)$ satisfies $J^{*}(l)\geq 0$. By \eqref{zhang1}, it is obvious that $u_{l}=0$ is the optimal controller and the associated optimal cost function $J^{*}(l)=0$.  The uniqueness of the optimal control implies that for any $u_{l}\neq 0$, $J(l)$ must be strictly positive.  Thus, following the discussion of \eqref{barjn1} for $J(N)$, we have $\Upsilon_{l}^{(1)}>0$ and $\Upsilon_{l}^{(2)}>0$.

Since $\Upsilon_{l}^{(1)}>0$ and $\Upsilon_{l}^{(2)}>0$,  the optimal controller can be given from \eqref{nc2}-\eqref{nc3} as \eqref{th43} for $k=l$, and the optimal cost function is given  as \eqref{jnst} for $k=l$.

Now we will show that \eqref{th4} associated with \eqref{th41}-\eqref{th42} are true for $k=l$. Since \eqref{th4} is assumed to be true for $k=l+1$, i.e., $\lambda_{l}$ is given by \eqref{ldan}. By substituting  \eqref{ldan} into \eqref{th32} for $k=l$, and applying the same lines for \eqref{nc8}-\eqref{barpn}, it is easy to verify that \eqref{th4} is true with $P_{l}$ satisfying \eqref{th41} and $\bar{P}_{l}^{(1)}$, $\bar{P}_{l}^{(2)}$, $\bar{P}_{l}^{(3)}$ given as \eqref{pk1}-\eqref{pk3} with $N$ replaced by $l$, furthermore $\bar{P}_{l}^{(1)}+\bar{P}_{l}^{(2)}+\bar{P}_{l}^{(3)}=\bar{P}_{l}$, and  $\bar{P}_{l}$ obeys \eqref{th42} for $k=l$.

Therefore, the proof of necessity is complete by using induction method.

``Sufficiency": Under Assumption \ref{ass1}, suppose $\Upsilon_{k}^{(1)},$ and $\Upsilon_{k}^{(2)}$, $k=0,\cdots,N$ are strictly positive definite, we will show that {\em Problem 1}  is uniquely solvable.

$V_{N}(k,x_{k})$ is denoted as
\begin{equation}\label{vnn}\begin{split}
  V_{N}(k,x_{k})\triangleq E(x_{k}'P_{k}x_{k})+Ex_{k}'\bar{P}_{k}Ex_{k},
\end{split}\end{equation}
where $P_k$ and $\bar{P}_k$ satisfy  \eqref{th41} and \eqref{th42} respectively. It follows that
\begin{align}\label{vn}
  &~~V_{N}(k,x_{k})-V_{N}(k+1,x_{k+1})\notag\\
  &=E\Big\{x_{k}'P_{k}x_{k}+Ex_{k}'\bar{P}_{k}Ex_{k}\notag\\
  &-x_{k}'\left(A_{k}'P_{k+1}A_{k}+\sigma^{2}C_{k}'P_{k+1}C_{k}\right)x_{k}\notag\\
  &-Ex_{k}'\Big[A_{k}'P_{k+1}\bar{A}_{k}+\sigma^{2}C_{k}'P_{k+1}\bar{C}_{k}\notag\\
 &~~~~+\bar{A}_{k}'P_{k+1}A_{k}+\sigma^{2}\bar{C}_{k}'P_{k+1}C_{k}\notag\\
  &~~~~+\bar{A}_{k}'P_{k+1}\bar{A}_{k}+\sigma^{2}\bar{C}_{k}'P_{k+1}\bar{C}_{k}\notag\\
 &~~~~+(A_{k}+\bar{A}_{k})'\bar{P}_{k+1}(A_{k}+\bar{A}_{k})\Big]Ex_{k}\notag\\
 &-x_{k}'\left(A_{k}'P_{k+1}B_{k}+\sigma^{2}C_{k}'P_{k+1}D_{k}\right)u_{k}\notag\\
 &-u_{k}'\left(B_{k}'P_{k+1}A_{k}+\sigma^{2}D_{k}'P_{k+1}C_{k}\right)x_{k}\notag\\
 &-Ex_{k}'\Big[A_{k}'P_{k+1}\bar{B}_{k}+\sigma^{2}C_{k}'P_{k+1}\bar{D}_{k}+\bar{A}_{k}'P_{k+1}B_{k}\notag\\
 &~~~~+\sigma^{2}\bar{C}_{k}'P_{k+1}D_{k}+\bar{A}_{k}'P_{k+1}\bar{B}_{k}+\sigma^{2}\bar{C}_{k}'P_{k+1}\bar{D}_{k}\notag\\
 &~~~~+(A_{k}+\bar{A}_{k})'\bar{P}_{k+1}(B_{k}+\bar{B}_{k})\Big]Eu_{k}\notag\\
  &-Eu_{k}'\Big[B_{k}'P_{k+1}\bar{A}_{k}+\sigma^{2}D_{k}'P_{k+1}\bar{C}_{k}+\bar{B}_{k}'P_{k+1}A_{k}\notag\\
 &~~~~+\sigma^{2}\bar{D}_{k}'P_{k+1}C_{k}+\bar{B}_{k}'P_{k+1}\bar{A}_{k}+\sigma^{2}\bar{D}_{k}'P_{k+1}\bar{C}_{k}\notag\\
 &~~~~+(B_{k}+\bar{B}_{k})'\bar{P}_{k+1}(A_{k}+\bar{A}_{k})\Big]Ex_{k}\notag\\
  &-u_{k}'\left(B_{k}'P_{k+1}B_{k}+\sigma^{2}D_{k}'P_{k+1}D_{k}\right)u_{k}\notag\\
 &-Eu_{k}'\Big[B_{k}'P_{k+1}\bar{B}_{k}+\sigma^{2}D_{k}'P_{k+1}\bar{D}_{k}+\bar{B}_{k}'P_{k+1}B_{k}\notag\\
 &~~~~+\sigma^{2}\bar{D}_{k}'P_{k+1}D_{k}+\bar{B}_{k}'P_{k+1}\bar{B}_{k}+\sigma^{2}\bar{D}_{k}'P_{k+1}\bar{D}_{k}\notag\\
 &~~~~+(B_{k}+\bar{B}_{k})'\bar{P}_{k+1}(B_{k}+\bar{B}_{k})\Big]Eu_{k}\Big\}\notag\\
 &=E\Big\{x_{k}'\{Q_{k}-[M_{k}^{(1)}]'[\Upsilon_{k}^{(1)}]^{-1}M_{k}^{(1)}\}x_{k}\notag\\
 &+Ex_{k}'\Big\{\bar{Q}_{k}+[M_{k}^{(1)}]'[\Upsilon_{k}^{(1)}]^{-1}M_{k}^{(1)}\notag\\
 &~~~~~-[M_{k}^{(2)}]'[\Upsilon_{k}^{(2)}]^{-1}M_{k}^{(2)}\Big\}Ex_{k}\notag\\
 &-x_{k}'[M_{k}^{(1)}]'u_{k}-u_{k}'M_{k}^{(1)}x_{k}+u_{k}'R_{k}u_{k}+Eu_{k}'\bar{R}_{k}Eu_{k}\notag\\
 &-Ex_{k}'[M_{k}^{(2)}-M_{k}^{(1)}]'Eu_{k}-Eu_{k}'[M_{k}^{(2)}-M_{k}^{(1)}]Ex_{k}\notag\\
 &-u_{k}'\Upsilon_{k}^{(1)}u_{k}-Eu_{k}'[\Upsilon_{k}^{(2)}-\Upsilon_{k}^{(1)}]Eu_{k}\Big\}\notag\\
  &=E\{x_{k}'Qx_{k}+Ex_{k}'\bar{Q}Ex_{k}+u_{k}'Ru_{k}+Eu_{k}'\bar{R}Eu_{k}\}\notag\\
  &-E\Big\{[u_{k}-Eu_{k}-K_{k}(x_{k}-Ex_{k})]'\Upsilon_{k}^{(1)}\notag\\
  &~~~~~\times[u_{k}-Eu_{k}-K_{k}(x_{k}-Ex_{k})]\Big\}\notag\\
  &-[Eu_{k}\hspace{-1.2mm}-\hspace{-1.2mm}(K_{k}\hspace{-1.2mm}+\hspace{-1mm}\bar{K}_{k})Ex_{k}]'\Upsilon_{k}^{(\hspace{-.3mm}2\hspace{-.3mm})}
  [Eu_{k}\hspace{-1.2mm}-\hspace{-1.2mm}(K_{k}\hspace{-1.2mm}+\hspace{-1.2mm}\bar{K}_{k})Ex_{k}],
\end{align}
where $K_{k}$ and $\bar{K}_{k}$ are respectively as in \eqref{kk} and \eqref{kkbar}.  Adding from $k=0$ to $k=N$ on both sides of
\eqref{vn}, the cost function \eqref{ps2} can be rewritten as
\begin{align}\label{jna}
    J_{N}
  &=\sum_{k=0}^{N}E\Big\{\Big[u_{k}-Eu_{k}-K_{k}(x_{k}-Ex_{k})\Big]'\Upsilon_{k}^{(1)}(N)\notag\\
  &~~~~~~~~~~~\times\Big[u_{k}-Eu_{k}-K_{k}(x_{k}-Ex_{k})\Big]\Big\}\notag\\
  &+\sum_{k=0}^{N}\left[Eu_{k}-(K_{k}+\bar{K}_{k})Ex_{k}\right]'\Upsilon_{k}^{(2)}(N)\notag\\
  &~~~~~~~~~~~\times \left[Eu_{k}-(K_{k}+\bar{K}_{k})Ex_{k}\right]\notag\\
  &+E(x_{0}'P_{0}x_{0})+Ex_{0}'\bar{P}_{0}Ex_{0}.
\end{align}
 Notice  $\Upsilon_{k}^{(1)}>0$ and $\Upsilon_{k}^{(2)}>0$, we have
 $$J_N\geq E(x_{0}'P_{0}x_{0})+Ex_{0}'\bar{P}_{0}Ex_{0},$$
  thus the minimum of $J_{N}$ is given by \eqref{jnst}, i.e.,
\begin{align*}
  J^{*}_{N}=E(x_{0}'P_{0}x_{0})+Ex_{0}'\bar{P}_{0}Ex_{0}.
\end{align*}
In this case the controller will satisfy that
\begin{align}
  u_{k}-Eu_{k}-K_{k}(x_{k}-Ex_{k})&=0,\label{bbb1} \\
  Eu_{k}-(K_{k}+\bar{K}_{k})Ex_{k}&=0.\label{bbb2}
\end{align}
Hence, the optimal controller can be uniquely obtained from \eqref{bbb1}-\eqref{bbb2} as \eqref{th43}.

In conclusion, \emph{Problem 1} admits a unique solution. The proof is complete.
\end{proof}

\section{Proof of Lemma \ref{111}}

\begin{proof}
Since $K_{k}(N)=-[\Upsilon_{k}^{(1)}(N)]^{-1}M_{k}^{(1)}(N)$, then it holds from \eqref{th41} that
\begin{equation*}\begin{split}
  &~~~[M_{k}^{(1)}(N)]'[\Upsilon_{k}^{(1)}(N)]^{-1}M_{k}^{(1)}(N)\\
  &=-[M_{k}^{(1)}(N)]'K_{k}(N)-K_{k}(N)'M_{k}^{(1)}(N)\\
  &~~~~-K_{k}(N)'\Upsilon_{k}^{(1)}(N)K_{k}(N).
\end{split}\end{equation*}

Thus, $P_{k}(N)$ in \eqref{th41} can be calculated as
\begin{align}\label{pnn1}
 &~~~P_{k}(N)\notag\\
 &=\hspace{-1mm}Q\hspace{-1mm}+\hspace{-1mm}A'P_{k+1}(N)A\hspace{-1mm}+\hspace{-1mm}\sigma^{2}C'P_{k+1}(N)C\hspace{-1mm}+
 \hspace{-1mm}[M_{k}^{(1)}(N)]'K_{k}(\hspace{-0.5mm}N\hspace{-0.5mm})\notag\\
 &+K_{k}'(N)M_{k}^{(1)}(N)+K_{k}'(N)\Upsilon_{k}^{(1)}(N)K_{k}(N)\notag\\
 &=Q+K_{k}'(N)RK_{k}(N)\notag\\
 &+[A+BK_{k}(N)]'P_{k+1}(N)[A+BK_{k}(N)]\notag\\
 &+\sigma^{2}[C+DK_{k}(N)]'P_{k+1}(N)[C+DK_{k}(N)].
\end{align}

Notice from Assumption \ref{ass1} that $Q\geq 0$ and$P_{N+1}(N)=P_{N+1}=0$, \eqref{pnn1} indicates that $P_{N}(N)\geq 0$. Using induction method, assume $P_{k}(N)\geq 0$ for $l+1\leq k\leq N$, by \eqref{pnn1}, immediately we can obtain $P_{l}(N)\geq 0$.

Therefore, for any $0\leq k\leq N$, $P_{k}(N)\geq 0$.

Moreover, using similar derivation with \eqref{pnn1}, from \eqref{upsi1}-\eqref{h2} we have that
\begin{align*}
  &~~~[M_{k}^{(2)}(N)]'[\Upsilon_{k}^{(2)}(N)]^{-1}M_{k}^{(2)}(N)\\
  &=\hspace{-1mm}-\hspace{-0.8mm}[M_{k}^{(2)}(N)]'[K_{k}(N)\hspace{-1mm}+\hspace{-1mm}\bar{K}_{k}(N)]\hspace{-1mm}-\hspace{-1mm}[K_{k}(N)\hspace{-1mm}+\hspace{-1mm}\bar{K}_{k}(N)]'M_{k}^{(2)}\\
  &~~~-[K_{k}(N)+\bar{K}_{k}(N)]'\Upsilon_{k}^{(2)}(N)[K_{k}(N)+\bar{K}_{k}(N)].
\end{align*}

Thus, $P_{k}(N)+\bar{P}_{k}(N)$ can be calculated as
\begin{align}\label{pnn2}
&~~~P_{k}(N)+\bar{P}_{k}(N)\notag\\
&=Q+\bar{Q}+(A+\bar{A})'[P_{k+1}(N)+\bar{P}_{k+1}(N)](A+\bar{A})\notag\\
&\hspace{-1mm}+\sigma^{2}(C+\bar{C})'P_{k+1}(N)(C+\bar{C})\notag\\
&\hspace{-1mm}-[M_{k}^{(2)}(N)]'\Upsilon_{k}^{(2)}(N)[M_{k}^{(2)}(N)]\notag\\
&\hspace{-1mm}=\hspace{-1mm}Q+\bar{Q}+[K_{k}(N)\hspace{-1mm}+\hspace{-1mm}\bar{K}_{k}(N)]'
(R\hspace{-1mm}+\hspace{-1mm}\bar{R})[K_{k}(N)\hspace{-1mm}+\hspace{-1mm}\bar{K}_{k}(N)]\notag\\
 &\hspace{-1mm}+\hspace{-1mm}\big\{A\hspace{-1mm}+\hspace{-1mm}\bar{A}\hspace{-1mm}
 +\hspace{-1mm}(B\hspace{-1mm}+\hspace{-1mm}\bar{B})[K_{k}(N)\hspace{-1mm}+\hspace{-1mm}\bar{K}_{k}(N)]\big\}'
 [P_{k+1}(N)\hspace{-1mm}+\hspace{-1mm}\bar{P}_{k+1}(\hspace{-0.5mm}N\hspace{-0.5mm})]\notag\\
 &\hspace{-1mm}\times\hspace{-1mm}\big\{A\hspace{-1mm}+\hspace{-1mm}\bar{A}\hspace{-1mm}
 +\hspace{-1mm}(B\hspace{-1mm}+\hspace{-1mm}\bar{B})[K_{k}(N)\hspace{-1mm}+\hspace{-1mm}\bar{K}_{k}(N)]\big\}\notag\\
 &\hspace{-1mm}+\sigma^{2}\{C+\bar{C}+(D+\bar{D})[K_{k}(N)+\bar{K}_{k}(N)]\}'P_{k+1}(N)\notag\\
 &\hspace{-1mm}\times \{C+\bar{C}+(D+\bar{D})[K_{k}(N)+\bar{K}_{k}(N)]\}.
\end{align}

Since $Q+\bar{Q}\geq 0$ as in Assumption \ref{ass1}, and $P_{N+1}=\bar{P}_{N+1}=0$, then $P_{N+1}(N)+\bar{P}_{N+1}(N)=P_{N+1}+\bar{P}_{N+1}= 0$. Furthermore, using induction method as above, we conclude that $P_{k}(N)+\bar{P}_{k}(N)\geq 0$ for any $0\leq k\leq N$. The proof is complete.
\end{proof}

\section{Proof of Lemma \ref{lemma2}}

\begin{proof}
If follows from Lemma \ref{111} that $P_{k}(N)\geq 0$ and $P_{k}(N)+\bar{P}_{k}(N)\geq 0$ for all $N\geq 0$. Via a time-shift, we can obtain $P_{k}(N)=P_{0}(N-k)$. Therefore, what we need to show is that there exists $\bar{N}_{0}>0$ such that $P_{0}(\bar{N}_{0})>0$ and $P_{0}(\bar{N}_{0})+\bar{P}_{0}(\bar{N}_{0})>0$.

Suppose this is not true, i.e., for arbitrary $N>0$, $P_{0}(N)$ and $P_{0}(N)+\bar{P}_{0}(N)$ are both strictly semi-definite positive. Now we construct two sets as follows,
\begin{align}
  X_{N}^{(1)}&\triangleq \Big\{x^{(1)}:x^{(1)}\neq0, E\{[x^{(1)}]'P_{0}(N)x^{(1)}\}=0,\notag\\
  &~~~~~~Ex^{(1)}=0\Big\},\label{zz1}\\
  X_{N}^{(2)}&\triangleq \Big\{x^{(2)}: x^{(2)}\neq  0,[x^{(2)}]'[P_{0}(N)+\bar{P}_{0}(N)]x^{(2)}=0,\notag\\
  &~~~~~~x^{(2)}=Ex^{(2)} ~\text{is deterministic}\Big\}.\label{zz2}
\end{align}
From Lemma \ref{lemma01} and Remark \ref{rem1}, we know that $X_{N}^{(1)}$ and $X_{N}^{(2)}$ are not empty.

Recall from Theorem \ref{main}, to minimize the cost function \eqref{ps2} with the weighting matrices, coefficient matrices being time-invariant and final condition $P_{N+1}(N)=\bar{P}_{N+1}(N)=0$, the optimal controller is given by \eqref{th43}, and the optimal cost function is presented as \eqref{jnst}, i.e.,
\begin{align}\label{opti}
&~~~J_{N}^{*}\notag\\
&=\min\{\sum_{k=0}^{N}E[x_{k}'Qx_{k}\hspace{-1mm}+\hspace{-1mm}Ex_{k}'\bar{Q}Ex_{k}\hspace{-1mm}+\hspace{-1mm}u_{k}'Ru_{k}\hspace{-1mm}+\hspace{-1mm}Eu_{k}'\bar{R}Eu_{k}]\}\notag\\
&=\sum_{k=0}^{N}E[x_{k}^{*'}Qx_{k}^{*}+Ex_{k}^{*'}\bar{Q}Ex_{k}^{*}+u_{k}^{*'}Ru_{k}^{*}+Eu_{k}^{*'}\bar{R}Eu_{k}^{*}]\notag\\
&=E[x_{0}'P_{0}(N)x_{0}]+(Ex_{0})'\bar{P}_{0}(N)(Ex_{0})\notag\\
&=E[(x_{0}-Ex_{0})'P_{0}(N)(x_{0}-Ex_{0})]\notag\\
&~~~+(Ex_{0})'[P_{0}(N)+\bar{P}_{0}(N)](Ex_{0}).
\end{align}
In the above equation, $x_{k}^{*}$ and $u_{k}^{*}$ represent the optimal state trajectory and the optimal controller, respectively.

Since $J_{N}\leq J_{N+1}$, then for any initial state $x_{0}$, we have $J_{N}^{*}\leq J_{N+1}^{*}$, it holds from \eqref{opti} that
\begin{align}\label{opti2}
&~~~E[(x_{0}-Ex_{0})'P_{0}(N)(x_{0}-Ex_{0})]\notag\\
&~~~+(Ex_{0})'[P_{0}(N)+\bar{P}_{0}(N)](Ex_{0})\notag\\
&\leq E[(x_{0}-Ex_{0})'P_{0}(N+1)(x_{0}-Ex_{0})]\notag\\
&~~~+(Ex_{0})'[P_{0}(N+1)+\bar{P}_{0}(N+1)](Ex_{0}).
\end{align}

For any initial state $x_{0}\neq 0$ with $Ex_{0}=0$, \eqref{opti2} can be reduced to
\begin{equation*}
  E[x_{0}'P_{0}(N)x_{0}]\leq E[x_{0}'P_{0}(N+1)x_{0}],
\end{equation*}
i.e., $E\{x_{0}'[P_{0}(N)-P_{0}(N+1)]x_{0}\}\leq 0$. By Lemma \ref{lemma01} and Remark \ref{rem1}, therefore we can obtain
\begin{align}\label{pi1}
P_{0}(N)\leq P_{0}(N+1),
\end{align}
 which implies that $P_{0}(N)$ increases with respect to $N$.

On the other hand, for arbitrary initial state $x_{0}\neq 0$ with $x_0=Ex_{0}$, i.e., $x_{0}\in \mathcal{R}^{n}$ is arbitrary deterministic, equation \eqref{opti2} indicates that
\begin{equation*}
 x_{0}'[P_{0}(N)+\bar{P}_{0}(N)]x_{0}\hspace{-1mm}\leq\hspace{-1mm} x_{0}'[P_{0}(N+1)+\bar{P}_{0}(N+1)]x_{0}.
\end{equation*}
 Note that $x_{0}$ is arbitrary, then using Remark \ref{rem1}, we have
\begin{align}\label{pi2}
P_{0}(N)+\bar{P}_{0}(N)\leq P_{0}(N+1)+\bar{P}_{0}(N+1),
\end{align}
which implies that $P_{0}(N)+\bar{P}_{0}(N)$ increases with respect to $N$, too. Furthermore, the monotonically increasing of $P_{0}(N)$ and
$P_{0}(N)+\bar{P}_{0}(N)$ indicates that

\begin{itemize}
\item  If $E\{[x^{(1)}]'P_{0}(N+1)x^{(1)}\}=0$ holds, then we can conclude $E\{[x^{(1)}]'P_{0}(N)x^{(1)}=0\}$;
\item  If $[x^{(2)}]'[P_{0}(N+1)+\bar{P}_{0}(N+1)]x^{(2)}=0$, then we can obtain $[x^{(2)}]'[P_{0}(N)+\bar{P}_{0}(N)]x^{(2)}=0$.
\end{itemize}
i.e., $X_{N+1}^{(1)} \subset X_{N}^{(1)}$ and $X_{N+1}^{(2)} \subset X_{N}^{(2)}$.

 As $\{X_{N}^{(1)}\}$ and $\{X_{N}^{(2)}\}$ are both non-empty finite dimensional sets, thus
\[1\leq \cdots \leq dim(X_{2}^{(1)}) \leq dim(X_{1}^{(1)}) \leq dim(X_{0}^{(1)})\leq n,\]
and
\[1\leq \cdots \leq dim(X_{2}^{(2)}) \leq dim(X_{1}^{(2)}) \leq dim(X_{0}^{(2)})\leq n.\]
where $dim$ means the dimension of the set.

Hence, there exists positive integer $N_{1}$, such that for any $N>N_{1}$, we can obtain
\begin{align*}
dim(X_{N}^{(1)})=dim(X_{N_{1}}^{(1)}),~dim(X_{N}^{(2)})=dim(X_{N_{1}}^{(2)}),
\end{align*}
which leads to $X_{N}^{(1)}=X_{N_{1}}^{(1)}$, and $X_{N}^{(2)}=X_{N_{1}}^{(2)}$, i.e.,
\begin{align*}
\bigcap_{N\geq 0}X_{N}^{(1)}=X_{N_{1}}^{(1)}\neq 0,~\bigcap_{N\geq 0}X_{N}^{(2)}=X_{N_{1}}^{(2)}\neq 0.
\end{align*}
Therefore, there exists nonzero $x^{(1)}\in X_{N_{1}}^{(1)}$ and $x^{(2)}\in X_{N_{1}}^{(2)}$ satisfying
\begin{align}
E\{[x^{(1)}]'P_{0}(N)x^{(1)}\}&=0,\label{x1}\\
[x^{(2)}]'[P_{0}(N)+\bar{P}_{0}(N)]x^{(2)}&=0.\label{x2}
\end{align}

1) Let the initial state of system \eqref{ps10} be $x_0=x^{(1)}$, where $x^{(1)}$ is as defined in \eqref{zz1}, then from  \eqref{opti} and using \eqref{x1}, the optimal value of the cost function can be calculated as
\begin{align}\label{jnstar}
 J_N^{*}&=\sum_{k=0}^{N}E[x_{k}^{*'}Qx_{k}^{*}\hspace{-1mm}+\hspace{-1mm}Ex_{k}^{*'}\bar{Q}Ex_{k}^{*}\hspace{-1mm}+\hspace{-1mm}u_{k}^{*'}Ru_{k}^{*}\hspace{-1mm}+\hspace{-1mm}Eu_{k}^{*'}\bar{R}Eu_{k}^{*}]\notag\\
  &=E\{[x^{(1)}]'P_{0}(N)x^{(1)}\}=0,
\end{align}
where $Ex^{(1)}=0$ has been used in the last equality. Notice  that $R>0$, $R+\bar{R}>0$, $Q\geq 0$ and $Q+\bar{Q}\geq 0$, from
\eqref{jnstar}, we obtain that
\[u_{k}^{*}=0,~Eu_{k}^{*}=0,~0\leq k\leq N,\]
and
 \begin{align*}
 0&=E[x_{k}^{*'}Qx_{k}^{*}+Ex_{k}^{*'}\bar{Q}Ex_{k}^{*}],~~0\leq k\leq N,\\
 &=E[(x_{k}^{*}-Ex_{k}^{*})'Q(x_{k}^{*}-Ex_{k}^{*})+Ex_{k}^{*'}(Q+\bar{Q})Ex_{k}^{*}],
 \end{align*}
i.e., $Q^{1/2}(x_{k}^{*}-Ex_{k}^{*})=0$, and $(Q+\bar{Q})^{1/2}Ex_{k}^{*}=0$.

By Assumption \ref{ass3}, $(A,\bar{A},C,\bar{C},\mathcal{Q}^{1/2})$ is exactly observation, i.e., $\left[
  \begin{array}{cc}
   Q& 0\\
   0        & Q+\bar{Q}      \\
  \end{array}
\right]^{1/2}\left[
  \begin{array}{cc}
   \hspace{-1mm} x_{k}-Ex_{k}\hspace{-1mm}\\
   \hspace{-1mm} Ex_{k}     \hspace{-1mm}           \\
  \end{array}
\right]=0\Rightarrow x_{0}=0$, then we have $x^{(1)}=x_0=Ex_0=0$, which is a contradiction with $x^{(1)}\neq 0$.

Thus, there exists $\bar{N}_{0}>0$, such that $P_{0}(\bar{N}_{0})>0$.

2) Let the initial state of system \eqref{ps10} be $x_{0}=x^{(2)}$, where $x^{(2)}$ is given by \eqref{zz2}, then by using \eqref{opti} and \eqref{x2}, the minimum of cost function can be rewritten as
 \begin{equation*}\begin{split}
 J_{N}^{*}&=\sum_{k=0}^{N}E[x_{k}^{*'}Qx_{k}^{*}\hspace{-1mm}+\hspace{-1mm}Ex_{k}^{*'}\bar{Q}Ex_{k}^{*}\hspace{-1mm}+\hspace{-1mm}
 u_{k}^{*'}Ru_{k}^{*}\hspace{-1mm}+\hspace{-1mm}Eu_{k}^{*'}\bar{R}Eu_{k}^{*}]\\
  &=[x^{(2)}]'[P_{0}(N)+\bar{P}_{0}(N)]x^{(2)}=0.
\end{split}\end{equation*}

Using similar method with that in 1), by Assumption \ref{ass3}, we can conclude that $x^{(2)}=x_{0}=Ex_{0}=0$, which is a contradiction with $x^{(2)}\neq 0$.

In conclusion, there exists $\bar{N}_{0}>0$ such that $P_{0}(\bar{N}_{0})>0$ and $P_{0}(\bar{N}_{0})+\bar{P}_{0}(\bar{N}_{0})>0$. Via a time-shift, hence we have, for any $k\geq 0$, there exists a positive integer $N_{0}\geq 0$ such that $P_{k}(N_{0})>0$ and
$P_{k}(N_{0})+\bar{P}_{k}(N_{0})>0$.

The proof is complete.
\end{proof}
\section{Proof of Theorem \ref{theorem2}}
\begin{proof}
1) Firstly, from the proof of Lemma \ref{lemma2}, we know that $P_{0}(N)$ and $P_{0}(N)+\bar{P}_{0}(N)$ are monotonically increasing, i.e., for any $N>0$,
\begin{align*}
P_{0}(N)&\leq P_{0}(N+1),\\
P_{0}(N)+\bar{P}_{0}(N)&\leq P_{0}(N+1)+\bar{P}_{0}(N+1).
\end{align*}
Next we will show that $P_{0}(N)$ and $P_{0}(N)+\bar{P}_{0}(N)$ are bounded. Since system \eqref{ps10} is stabilizable in the mean square sense, there exists $u_{k}$ has the form
\begin{equation}\label{uk}
  u_{k}=Lx_{k}+\bar{L}Ex_{k},
\end{equation}
with constant matrices $L$ and $\bar{L}$ such that the closed-loop system \eqref{ps10} satisfies
\begin{equation}\label{asy}
  \lim_{k\rightarrow +\infty}E(x_{k}'x_{k})=0.
\end{equation}

As $(Ex_{k})'Ex_{k}+E(x_{k}-Ex_{k})'(x_{k}-Ex_{k})=E(x_{k}'x_{k})$, thus, equation \eqref{asy} implies $\lim_{k\rightarrow +\infty}(Ex_{k})'Ex_{k}=0$.

Substituting \eqref{uk} into \eqref{ps10}, we can obtain
\begin{align}
  &x_{k+1}=[(A+w_{k}C)+(B+w_{k}D)L]x_{k}\label{xkexk}\\
  &~~\hspace{-1mm}+\hspace{-1mm}[(B\hspace{-1mm}+\hspace{-1mm}w_{k}D)\bar{L}\hspace{-1mm}+\hspace{-1mm}
  (\bar{A}\hspace{-1mm}+\hspace{-1mm}w_{k}\bar{C})\hspace{-1mm}+\hspace{-1mm}(\bar{B}\hspace{-1mm}+\hspace{-1mm}w_{k}\bar{D})
  (L\hspace{-1mm}+\hspace{-1mm}\bar{L})]Ex_{k},\notag\\
 & Ex_{k+1}=[(A+\bar{A})+(B+\bar{B})(L+\bar{L})]Ex_{k}.\label{exk}
\end{align}

Denote $
X_{k}\triangleq \left[
  \begin{array}{cc}
     x_{k} \\
  Ex_{k}   \\
  \end{array}
 \right],$ and $
 \mathcal{X}_{k}\triangleq E[X_{k}X_{k}'].
 $

Following from \eqref{xkexk} and \eqref{exk}, it holds
\begin{align}\label{xk1}
X_{k+1}=\mathcal{A}X_{k},
\end{align}
where $\mathcal{A}=\left[
  \begin{array}{cc}
     A_{11} &  A_{12} \\
    0          &A_{22}      \\
  \end{array}
 \right]$, $A_{11}=(A+w_{k}C)+(B+w_{k}D)L$,
$A_{12}=(B\hspace{-1mm}+\hspace{-1mm}w_{k}D)\bar{L}\hspace{-1mm}+\hspace{-1mm}
  (\bar{A}\hspace{-1mm}+\hspace{-1mm}w_{k}\bar{C})\hspace{-1mm}+\hspace{-1mm}(\bar{B}\hspace{-1mm}+\hspace{-1mm}w_{k}\bar{D})
  (L\hspace{-1mm}+\hspace{-1mm}\bar{L}),$ and
$A_{22}=(A+\bar{A})+(B+\bar{B})(L+\bar{L}).$

The mean square stabilization of $\lim_{k\rightarrow+\infty}E(x_{k}'x_{k})\hspace{-1mm}=\hspace{-1mm}0$ implies $\lim_{k\rightarrow+\infty}\mathcal{X}_{k}\hspace{-1mm}=\hspace{-1mm}0$, thus, it follows from \cite{rami} that
\begin{equation*}
  \sum_{k=0}^{\infty}E(x_{k}'x_{k})< +\infty,~\text{and}~ \sum_{k=0}^{\infty}(Ex_{k})'(Ex_{k})< +\infty.
\end{equation*}

Therefore, there exists constant $c$ such that
\begin{align}\label{cnc}
\sum_{k=0}^{\infty}E(x_{k}'x_{k})\leq cE(x_{0}'x_{0}).
\end{align}

Since $Q\geq 0$, $Q+\bar{Q}\geq 0$, $R>0$ and $R+\bar{R}>0$, thus there exists constant $\lambda$ such that $\left[\hspace{-1mm}
  \begin{array}{cc}
     Q &  0 \\
    0         &  Q+\bar{Q}      \\
  \end{array}
\hspace{-1mm}\right]\leq \lambda I$ and $\left[\hspace{-1mm}
  \begin{array}{cc}
     L'RL &  0 \\
    0          & (L+\bar{L})'(R+\bar{R})(L+\bar{L})     \\
  \end{array}
\hspace{-1mm}\right]\leq \lambda I$, using \eqref{uk} and \eqref{cnc}, we obtain that
\begin{align}\label{jjj}
  J&=\sum_{k=0}^{\infty}E[x_{k}'Qx_{k}+u_{k}'Ru_{k}+Ex_{k}'\bar{Q}Ex_{k}+Eu_{k}'\bar{R}Eu_{k}]\notag\\
  &=\sum_{k=0}^{\infty}E\Big\{x_{k}'(Q+L'RL)x_{k}+Ex_{k}'\big[\bar{Q}+L'R\bar{L}+\bar{L}'RL\notag\\
  &+\bar{L}'R\bar{L}+(L+\bar{L})'\bar{R}(L+\bar{L})\big]Ex_{k}\Big\}\notag\\
  &=\sum_{k=0}^{\infty}E\Big\{ \left[\hspace{-1mm}
  \begin{array}{cc}
     x_{k}\hspace{-1mm}-\hspace{-1mm}Ex_{k} \\
    Ex_{k}             \\
  \end{array}
\hspace{-1mm}\right]'\left[\hspace{-1mm}
  \begin{array}{cc}
     Q &  0 \\
    0        &  Q+\bar{Q}       \\
  \end{array}
\hspace{-1mm}\right]\left[\hspace{-1mm}
  \begin{array}{cc}
     x_{k}\hspace{-1mm}-\hspace{-1mm}Ex_{k} \\
    Ex_{k}             \\
  \end{array}
\hspace{-1mm}\right]\Big\}\notag\\
&+\sum_{k=0}^{\infty}E\Big\{\left[\hspace{-1mm}
  \begin{array}{cc}
    x_{k}\hspace{-1mm}-\hspace{-1mm}Ex_{k} \\
    Ex_{k}             \\
  \end{array}
\hspace{-1mm}\right]'\left[
  \begin{array}{cc}
     L'RL &  0 \\
    0          & (L\hspace{-1mm}+\hspace{-1mm}\bar{L})'(R\hspace{-1mm}+\hspace{-1mm}\bar{R})(L\hspace{-1mm}+\hspace{-1mm}\bar{L})       \\
  \end{array}
\hspace{-1mm}\right]\notag\\
&~~~~~~~\times\left[\hspace{-1mm}
  \begin{array}{cc}
     x_{k}\hspace{-1mm}-\hspace{-1mm}Ex_{k} \\
    Ex_{k}            \\
  \end{array}
\hspace{-1mm}\right]
\Big\}\notag\\
&\leq 2\lambda\sum_{k=0}^{\infty}E[(Ex_{k})'Ex_{k}+(x_{k}-Ex_{k})'(x_{k}-Ex_{k})]\notag\\
&=2\lambda\sum_{k=0}^{\infty}E(x_{k}'x_{k})\leq 2\lambda cE(x_{0}'x_{0}).
\end{align}
On the other hand, by \eqref{jnst}, notice the fact that
$$ E[x_{0}'P_{0}(N)x_{0}]+(Ex_{0})'\bar{P}_{0}(N)(Ex_{0})=J_{N}^{*}\leq J, $$
thus, \eqref{jjj} yields
\begin{equation}\label{x0}
 E[x_{0}'P_{0}(\hspace{-0.5mm}N\hspace{-0.5mm})x_{0}]\hspace{-1mm}+\hspace{-1mm}(Ex_{0})'\bar{P}_{0}(\hspace{-0.5mm}N\hspace{-0.5mm})(Ex_{0})\hspace{-1mm} \leq \hspace{-1mm} 2\lambda cE(x_{0}'x_{0}).
\end{equation}
Now we let the state initial value be random vector with zero mean, i.e., $Ex_{0}=0$, it follows from \eqref{x0} that
\begin{equation*}
  E[x_{0}'P_{0}(N)x_{0}]\leq 2\lambda cE(x_{0}'x_{0}).
\end{equation*}
Since $x_0$ is arbitrary with $Ex_0=0$,  by Lemma \ref{lemma01} and Remark \ref{rem1}, we have
\begin{align*}
  P_{0}(N)\leq 2\lambda cI.
\end{align*}

Similarly, let the state initial value be arbitrary deterministic i.e., $x_{0}=Ex_{0}$, \eqref{x0} yields that
\begin{equation*}\begin{split}
 x_{0}'[P_{0}(N)+\bar{P}_{0}(N)]x_{0}=J_{N}^{*}\leq J\leq 2\lambda cx_{0}'x_{0},
\end{split}\end{equation*}
which implies
\begin{equation*}
  P_{0}(N)+\bar{P}_{0}(N)\leq 2\lambda cI.
\end{equation*}

Therefore, both $P_{0}(N)$ and $P_{0}(N)+\bar{P}_{0}(N)$ are bounded. Recall that $P_{0}(N)$ and $P_{0}(N)+\bar{P}_{0}(N)$ are monotonically increasing, we conclude that $P_{0}(N)$ and $P_{0}(N)+\bar{P}_{0}(N)$ are convergent, i.e., there exists $P$ and $\bar{P}$ such that
\begin{equation*}
  \lim_{N\rightarrow +\infty}P_{k}(N)=\lim_{N\rightarrow +\infty}P_{0}(N-k)=P,
\end{equation*}
\begin{equation*}
  \lim_{N\rightarrow +\infty}\bar{P}_{k}(N)=\lim_{N\rightarrow +\infty}\bar{P}_{0}(N-k)=\bar{P}.
\end{equation*}

Furthermore, in view of \eqref{upsi1}-\eqref{h2}, we know that $\Upsilon_{k}^{(1)}(N)$, $M_{k}^{(1)}(N)$, $\Upsilon_{k}^{(2)}(N)$ and $M_{k}^{(2)}(N)$ are convergent, i.e.,
\begin{align}
\lim_{N\rightarrow+ \infty}\Upsilon_{k}^{(1)}(N)&=\Upsilon^{(1)}\geq R>0,\label{u1}\\
\lim_{N\rightarrow +\infty}M_{k}^{(1)}(N)&=M^{(1)},\label{hhh1}\\
\lim_{N\rightarrow +\infty}\Upsilon_{k}^{(2)}(N)&=\Upsilon^{(2)}\geq R+\bar{R}>0,\label{u2}\\
\lim_{N\rightarrow +\infty}M_{k}^{(2)}(N)&=M^{(2)}.\label{hhh2}
\end{align}
where $\Upsilon^{(1)},M^{(1)},\Upsilon^{(2)},M^{(2)}$ are given by \eqref{up1}-\eqref{hh2}. Taking limitation on both sides of \eqref{th41} and \eqref{th42}, we know that $P$ and $\bar{P}$ satisfy the coupled ARE \eqref{are1}-\eqref{are2}.

2) From Lemma \ref{lemma2}, for any $k\geq 0$, there exists $N_{0}>0$ such that, $P_{k}(N_{0})>0$ and
$P_{k}(N_{0})+\bar{P}_{k}(N_{0})>0$, hence we have
\begin{align*}
  &P=\lim_{N\rightarrow +\infty}P_{k}(N)\geq P_{k}(N_{0})>0,\\
  &P\hspace{-1mm}+\hspace{-1mm}\bar{P}\hspace{-1mm}=\hspace{-1mm}\lim_{N\rightarrow +\infty}[P_{k}(N)\hspace{-1mm}+\hspace{-1mm}\bar{P}_{k}(N)]\geq P_{k}(N_{0})\hspace{-1mm}+\hspace{-1mm}\bar{P}_{k}(N_{0})>0.
\end{align*}
This ends the proof.
\end{proof}

\section{Proof of Theorem \ref{succeed}}

\begin{proof}
\emph{``Sufficiency"}: Under Assumptions \ref{ass2} and \ref{ass3}, we suppose that $P$ and $\bar{P}$ are the solution of \eqref{are1}-\eqref{are2} satisfying $P>0$ and $P+\bar{P}>0$, we will show \eqref{control} stabilizes \eqref{ps10} in mean square sense.

Similar to \eqref{vnn}, we define the Lyapunov function candidate $V(k,x_{k})$ as
\begin{align}\label{lya}
  V(k,x_{k})&\triangleq E(x_{k}'Px_{k})+Ex_{k}'\bar{P}Ex_{k}.
\end{align}
 Apparently we have
\begin{align}\label{vnk}
V(k,x_{k})&=E[(x_{k}\hspace{-1mm}-\hspace{-1mm}Ex_{k})'P(x_{k}\hspace{-1mm}-\hspace{-1mm}Ex_{k})\hspace{-1mm}+
\hspace{-1mm}Ex_{k}'(P\hspace{-1mm}+\hspace{-1mm}\bar{P})Ex_{k}]\notag\\
&\geq 0.
\end{align}

 We claim that $V(k,x_{k})$ monotonically decreases. Actually, following the derivation of \eqref{vn}, we have
\begin{align}\label{lya1}
   &~~V(k,x_{k})-V(k+1,x_{k+1})\notag\\
  &=E[x_{k}'Qx_{k}+Ex_{k}'\bar{Q}Ex_{k}+u_{k}'Ru_{k}+Eu_{k}'\bar{R}Eu_{k}]\notag\\
  &~-E\{[u_{k}-Eu_{k}-K(x_{k}-Ex_{k})]'\Upsilon^{(1)}\notag\\
  &~~~~~~~~~~~~~~\times[u_{k}-Eu_{k}-K(x_{k}-Ex_{k})]\}\notag\\
  &-[Eu_{k}-(K+\bar{K})Ex_{k}]'\Upsilon^{(2)} [Eu_{k}-(K+\bar{K})Ex_{k}]\notag\\
  &=E[x_{k}'Qx_{k}+Ex_{k}'\bar{Q}Ex_{k}+u_{k}'Ru_{k}+Eu_{k}'\bar{R}Eu_{k}]\notag\\
  &\geq 0,~~k\geq 0,
\end{align}
where $u_{k}=Kx_{k}+\bar{K}Ex_{k}$ is used in the last identity. The last inequality implies that $V(k,x_{k})$ decreases with respect to $k$, also from \eqref{vnk} we know that $V(k,x_{k})\geq 0$, thus $V(k,x_{k})$ is convergent.

Let $l$ be any positive integer, by adding from $k=l$ to $k=l+N$ on both sides of \eqref{lya1}, we obtain that
\begin{align}\label{lmi1}
  &\sum_{k=l}^{l+N}E[x_{k}'Qx_{k}\hspace{-1mm}+\hspace{-1mm}Ex_{k}'\bar{Q}Ex_{k}\hspace{-1mm}+\hspace{-1mm}u_{k}'Ru_{k}\hspace{-1mm}+\hspace{-1mm}
  Eu_{k}'\bar{R}Eu_{k}]\notag\\
  &=[V(l,x_{l})-V(l+N+1,x_{l+N+1})].
\end{align}

Since $V(k,x_{k})$ is convergent, then by taking limitation of $l$ on both sides of \eqref{lmi1}, it holds
\begin{align}\label{lmi}
  &\lim_{l\rightarrow+\infty}\sum_{k=l}^{l+N}E[x_{k}'Qx_{k}\hspace{-1mm}+\hspace{-1mm}Ex_{k}'\bar{Q}Ex_{k}\hspace{-1mm}+\hspace{-1mm}u_{k}'Ru_{k}\hspace{-1mm}+\hspace{-1mm}
  Eu_{k}'\bar{R}Eu_{k}]\notag\\
  &=\lim_{l\rightarrow+\infty}[V(l,x_{l})-V(l+N+1,x_{l+N+1})]=0.
\end{align}

Recall from \eqref{opti} that
\begin{align}
  &~~~~J_{N}=\sum_{k=0}^{N}E[x_{k}'Qx_{k}\hspace{-1mm}+\hspace{-1mm}Ex_{k}'\bar{Q}Ex_{k}\hspace{-1mm}+\hspace{-1mm}u_{k}'Ru_{k}\hspace{-1mm}+\hspace{-1mm}
  Eu_{k}'\bar{R}Eu_{k}]\notag\\
  &\geq J_{N}^{*}=E[x_{0}'P_{0}(N)x_{0}]+Ex_{0}'\bar{P}_{0}(N)Ex_{0}.\label{jn}
\end{align}

Thus, taking limitation on both sides of \eqref{jn}, via a time-shift of $l$ and using \eqref{lmi}, it yields that
\begin{align}\label{ly1}
  &0\hspace{-1mm}=\hspace{-1mm}\lim_{l\rightarrow+\infty}\sum_{k=l}^{l+N}E[x_{k}'Qx_{k}\hspace{-1mm}+\hspace{-1mm}Ex_{k}'\bar{Q}Ex_{k}\hspace{-1mm}+\hspace{-1mm}u_{k}'Ru_{k}\hspace{-1mm}+\hspace{-1mm}
  Eu_{k}'\bar{R}Eu_{k}]\notag\\
  &\geq\lim_{l\rightarrow+\infty} E\left[x_{l}'P_{l}(l+N)x_{l}+Ex_{l}'\bar{P}_{l}(l+N)Ex_{l}\right]\notag\\
&=\lim_{l\rightarrow+\infty} E\Big\{(x_{l}-Ex_{l})'P_{l}(l+N)(x_{l}-Ex_{l})\notag\\
&~~~~~+Ex_{l}'[P_{l}(l+N)+\bar{P}_{l}(l+N)]Ex_{l}\Big\}\notag\\
&=\lim_{l\rightarrow+\infty} E\Big\{(x_{l}-Ex_{l})'P_{0}(N)(x_{l}-Ex_{l})\notag\\
&~~~~~+Ex_{l}'[P_{0}(N)+\bar{P}_{0}(N)]Ex_{l}\Big\}\geq 0.
\end{align}

Hence, it follows from \eqref{ly1} that
\begin{align}
\lim_{l\rightarrow+\infty}E[(x_{l}-Ex_{l})'P_{0}(N)(x_{l}-Ex_{l})]&=0,\label{conc1}\\
\lim_{l\rightarrow+\infty}Ex_{l}'[P_{0}(N)+\bar{P}_{0}(N)]Ex_{l}&=0.\label{conc2}
\end{align}

By Lemma \ref{lemma2}, we know that there exists $N_{0}\geq 0$ such that $P_{0}(N)>0$ and $P_{0}(N)+\bar{P}_{0}(N)>0$ for any $N>N_{0}$.
Thus from \eqref{conc1} and \eqref{conc2}, we have
\begin{align}\label{geq}
\lim_{l\rightarrow+\infty}\hspace{-2mm}E[(x_{l}\hspace{-1mm}-\hspace{-1mm}Ex_{l})'(x_{l}\hspace{-1mm}-\hspace{-1mm}Ex_{l})]\hspace{-1mm}=\hspace{-1mm}0,
\lim_{l\rightarrow+\infty}\hspace{-2mm}Ex_{l}'Ex_{l}\hspace{-1mm}=\hspace{-1mm}0,
\end{align}
which indicates that $\lim_{l\rightarrow+\infty}E(x_{l}'x_{l})=0.$

In conclusion, \eqref{control} stabilizes \eqref{ps10} in the mean square sense.

Next we will show that controller \eqref{control} minimizes the cost function \eqref{ps200}. For \eqref{lya1}, adding from $k=0$ to $k=N$, we have
\begin{align}\label{kon}
  &\sum_{k=0}^{N}E[x_{k}'Qx_{k}\hspace{-1mm}+\hspace{-1mm}Ex_{k}'\bar{Q}Ex_{k}\hspace{-1mm}+\hspace{-1mm}u_{k}'Ru_{k}\hspace{-1mm}+\hspace{-1mm}
  Eu_{k}'\bar{R}Eu_{k}]\notag\\
  &=V(0,x_{0})-V(N+1,x_{N+1})\notag\\
  &+\sum_{k=0}^{N}E\Big\{[u_{k}-Eu_{k}-K(x_{k}-Ex_{k})]'\Upsilon^{(1)}\notag\\
  &~~~~~~~~~~~~~~\times[u_{k}-Eu_{k}-K(x_{k}-Ex_{k})]\Big\}\notag\\
  &\hspace{-1mm}+\hspace{-1mm}\sum_{k=0}^{N}\hspace{-0.3mm}[Eu_{k}\hspace{-1mm}-\hspace{-1mm}(K\hspace{-1mm}+\hspace{-1mm}\bar{K})Ex_{k}]'\Upsilon^{(\hspace{-0.3mm}2\hspace{-0.3mm})}
  [Eu_{k}\hspace{-1mm}-\hspace{-1mm}(K\hspace{-1mm}+\hspace{-1mm}\bar{K})Ex_{k}].
\end{align}

Moreover, following from \eqref{lya} and \eqref{geq}, we have that
\begin{align*}
  0&\hspace{-1mm}\leq \hspace{-1mm}\lim_{k\rightarrow+\infty} V(k,x_{k})\hspace{-1mm}=\hspace{-1mm}\lim_{k\rightarrow+\infty} E\{x_{k}'Px_{k}+Ex_{k}'\bar{P}Ex_{k}\}=0.
\end{align*}

Thus, taking limitation of $N\rightarrow+\infty$ on both sides of \eqref{kon} and noting \eqref{ps200}, we have
\begin{align}\label{kon1}
  &J=\sum_{k=0}^{\infty}E[x_{k}'Qx_{k}\hspace{-1mm}+\hspace{-1mm}Ex_{k}'\bar{Q}Ex_{k}\hspace{-1mm}+\hspace{-1mm}u_{k}'Ru_{k}\hspace{-1mm}+\hspace{-1mm}
  Eu_{k}'\bar{R}Eu_{k}]\notag\\
  &=E[(x_{0}-Ex_{0})'P(x_{0}-Ex_{0})]+Ex_{0}'(P+\bar{P})Ex_{0}\notag\\
  &+\sum_{k=0}^{\infty}E\Big\{[u_{k}-Eu_{k}-K(x_{k}-Ex_{k})]'\Upsilon^{(1)}\notag\\
  &~~~~~~~~~~~~~~\times[u_{k}-Eu_{k}-K(x_{k}-Ex_{k})]\Big\}\notag\\
  &\hspace{-1mm}+\hspace{-1mm}\sum_{k=0}^{\infty}[Eu_{k}\hspace{-1mm}-\hspace{-1mm}(K\hspace{-1mm}+\hspace{-1mm}\bar{K})Ex_{k}]'
  \Upsilon^{(\hspace{-0.4mm}2\hspace{-0.4mm})}
  [Eu_{k}\hspace{-1mm}-\hspace{-1mm}(K\hspace{-1mm}+\hspace{-1mm}\bar{K})Ex_{k}].
\end{align}

Note that $\Upsilon^{(1)}>0$ and $\Upsilon^{(2)}>0$, following the discussion in the sufficiency proof of Theorem \ref{main}, thus, the cost function \eqref{ps200} can be minimized by controller \eqref{control}. Furthermore, directly from \eqref{kon1}, the optimal cost function can be given as \eqref{cost}.

\emph{``Necessity"}: Under Assumptions \ref{ass2} and \ref{ass3}, if \eqref{ps10} is stablizable in mean square sense, we will show that the coupled ARE \eqref{are1}-\eqref{are2} has unique solution $P$ and $P+\bar{P}$ satisfying $P>0$ and $P+\bar{P}>0$. The existence of the solution to \eqref{are1}-\eqref{are2} satisfying $P>0$ and $P+\bar{P}>0$ has been verified in Theorem \ref{theorem2}. The uniqueness of the solution remains to be shown.

 Let $S$ and $\bar{S}$ be another solution of \eqref{are1}-\eqref{are2} satisfying $S>0$ and $S+\bar{S}>0$, i.e.,
\begin{align}
  S&=Q+A'SA+\sigma^{2}C'SC-\hspace{-1mm}[T^{(1)}]'[\Delta^{(1)}]^{-1}T^{(1)},\label{are3}\\
  \bar{S}&=\bar{Q}+A'S\bar{A}+\sigma^{2}C'S\bar{C}+\bar{A}'SA+\sigma^{2}\bar{C}'SC\notag\\
  &~~+\bar{A}'S\bar{A}+\sigma^{2}\bar{C}'S\bar{C}+(A+\bar{A})'\bar{S}(A+\bar{A})\notag\\
  &~~+[T^{(1)}]'[\Delta^{(1)}]^{-1}T^{(1)}-[T^{(2)}]'[\Delta^{(2)}]^{-1}T^{(2)},\label{are4}
\end{align}
where
\begin{align*}
\Delta^{(1)}&=R+B'SB+\sigma^{2}D'SD,\\
T^{(1)}&=B'SA+\sigma^{2}D'SC,\\
\Delta^{(2)}&=R+\bar{R}+(B+\bar{B})'(S+\bar{S})(B+\bar{B})\\
&~~~~~+\sigma^{2}(D+\bar{D})'S(D+\bar{D}),\\
T^{(2)}&=(B+\bar{B})'(S+\bar{S})(A+\bar{A})\\
&~~~~~+\sigma^{2}(D+\bar{D})'S(C+\bar{C}).
\end{align*}

Notice that the optimal cost function has been proved to be \eqref{cost}, i.e.,
\begin{align}\label{cost2}
  J^{*}&=E(x_{0}'Px_{0})+Ex_{0}'\bar{P}Ex_{0}\notag\\
  &=E(x_{0}'Sx_{0})+Ex_{0}'\bar{S}Ex_{0}.
\end{align}

For any initial state $x_{0}$ satisfying $x_{0}\neq 0$ and $Ex_{0}=0$, equation \eqref{cost2} implies that
\begin{equation*}
  E[x_{0}'(P-S)x_{0}]=0,
\end{equation*}
By Lemma \ref{lemma01} and Remark \ref{rem1}, we can conclude that $P=S$.

Moreover, if $x_{0}=Ex_{0}$ is arbitrary deterministic initial state, it follows from \eqref{cost2} that
\begin{equation*}
  x_{0}'(P+\bar{P}-S-\bar{S})x_{0}=0,
\end{equation*}
which indicates $P+\bar{P}=S+\bar{S}$.

Hence we have $S=P$ and $\bar{S}=\bar{P}$, i.e., the uniqueness has been proven. The proof is complete.
\end{proof}

\section{Proof of Theorem \ref{succeed2}}

\begin{proof}
``Necessity:" Under Assumption \ref{ass2} and \ref{ass4}, suppose mean-field system \eqref{ps10} is stabilizable in mean square sense, we will show that the coupled ARE \eqref{are1}-\eqref{are2} has a unique solution $P$ and $\bar{P}$ with $P\geq 0$ and $P+\bar{P}\geq 0$.

Actually, from \eqref{opti}-\eqref{pi2} in the proof of Lemma \ref{lemma2}, we know that $P_{0}(N)$ and $P_{0}(N)+\bar{P}_{0}(N)$ are monotonically increasing, then following the lines of \eqref{uk}-\eqref{x0}, the boundedness of $P_{0}(N)$ and $P_{0}(N)+\bar{P}_{0}(N)$ can be obtained. Hence, $P_{0}(N)$ and $P_{0}(N)+\bar{P}_{0}(N)$ are convergent. Then there exists $P$ and $\bar{P}$ such that
\begin{align*}
  \lim_{N\rightarrow+\infty}P_{k}(N)=\lim_{N\rightarrow+\infty}P_{0}(N-k)&=P,\\
  \lim_{N\rightarrow+\infty}\bar{P}_{k}(N)=\lim_{N\rightarrow+\infty}\bar{P}_{0}(N-k)&=\bar{P}.
\end{align*}
From Lemma \ref{111}, we know that $P_{k}(N)\geq 0$ and $P_{k}(N)+\bar{P}_{k}(N)\geq 0$, thus we have $P\geq 0$ and $P+\bar{P}\geq 0$. Furthermore, in view of \eqref{upsi1}-\eqref{h2}, $\Upsilon^{(1)},\Upsilon^{(2)},M^{(1)},M^{(2)}$ in \eqref{up1}-\eqref{hh2} can be obtained. Taking limitation on both sides of \eqref{th41} and \eqref{th42}, we know that $P$ and $\bar{P}$ satisfy the coupled ARE \eqref{are1} and \eqref{are2}. Under Assumption \ref{ass2}, Lemma \ref{lemma3} yields that \emph{Problem 1} has a unique solution, then following the steps of \eqref{are3}-\eqref{cost2} in Theorem \ref{succeed}, the uniqueness of $P$ and $\bar{P}$ can be obtained. Finally, taking limitation on both sides of \eqref{th43} and \eqref{jnst}, the unique optimal controller can be given as \eqref{control}, and optimal cost function is presented by \eqref{cost}. The necessity proof is complete.

``Sufficiency:" Under Assumption \ref{ass2} and \ref{ass4}, if $P$ and $\bar{P}$ are the unique solution to \eqref{are1}-\eqref{are2} satisfying $P\geq 0$ and $P+\bar{P}\geq 0$, we will show that \eqref{control} stabilizes system \eqref{ps10} in mean square sense.

Following from \eqref{pnn1}-\eqref{pnn2}, the coupled ARE \eqref{are1}-\eqref{are2} can be rewritten as follows:
\begin{align}
  P&=Q+K'RK+(A+BK)'P(A+BK)\notag\\
  &+\sigma^{2}(C+DK)'P(C+DK),\label{ly01}\\
  P+\bar{P}&=Q+\bar{Q}+(K+\bar{K})'(R+\bar{R})(K+\bar{K})\notag\\
  &+[A+\bar{A}+(B+\bar{B})(K+\bar{K})]'(P+\bar{P})\notag\\
  &\times[A+\bar{A}+(B+\bar{B})(K+\bar{K})]\notag\\
  &+\sigma^{2}[C+\bar{C}+(D+\bar{D})(K+\bar{K})]'P\notag\\
  &\times[C+\bar{C}+(D+\bar{D})(K+\bar{K})],\label{ly2}
\end{align}
in which $K$ and $\bar{K}$ are respectively given as \eqref{K} and \eqref{KK}.

Recalling that the Lyapunov function candidate is denoted as in \eqref{lya} and using optimal controller \eqref{control}, we rewrite \eqref{lya1} as
\begin{align}\label{lya2}
&V(k,x_{k})-V(k+1,x_{k+1})\notag\\
&=E\{x_{k}'(Q+K'RK)x_{k}+Ex_{k}'[\bar{Q}+\bar{K}'RK+K'R\bar{K}\notag\\
&+\bar{K}'R\bar{K}+(K+\bar{K})'\bar{R}(K+\bar{K})]Ex_{k}\}\notag\\
&=E\{(x_{k}-Ex_{k})'(Q+K'RK)(x_{k}-Ex_{k})+Ex_{k}'[Q\notag\\
&+\bar{Q}+(K+\bar{K})'(R+\bar{R})(K+\bar{K})]Ex_{k}\}\notag\\
&=E(\mathbb{X}_{k}'\tilde{\mathcal{Q}}\mathbb{X}_{k})\geq 0.
\end{align}
where $\tilde{\mathcal{Q}}\hspace{-1mm}=\hspace{-1mm}\left[\hspace{-2mm}
  \begin{array}{cc}
   Q\hspace{-1mm}+\hspace{-1mm}K'RK& 0\\
   0        & Q\hspace{-1mm}+\hspace{-1mm}\bar{Q}\hspace{-1mm}+\hspace{-1mm}
   (K\hspace{-1mm}+\hspace{-1mm}\bar{K})'(R\hspace{-1mm}+\hspace{-1mm}\bar{R})(K\hspace{-1mm}+\hspace{-1mm}\bar{K})      \\
  \end{array}
\hspace{-2mm}\right]\hspace{-1mm}\geq\hspace{-1mm} 0$, and $\mathbb{X}_{k}=\left[
  \begin{array}{cc}
   \hspace{-1mm} x_{k}-Ex_{k}\hspace{-1mm}\\
   \hspace{-1mm} Ex_{k}     \hspace{-1mm}           \\
  \end{array}
\right]$.

Taking summation on both sides of \eqref{lya2} from $0$ to $N$ for any $N>0$, we have that
\begin{align}\label{lya3}
&~~\sum_{k=0}^{N}E(\mathbb{X}_{k}'\tilde{\mathcal{Q}}\mathbb{X}_{k})=V(0,x_{0})-V(N+1,x_{N+1})\notag\\
&=E(x_{0}'Px_{0})+(Ex_{0})'\bar{P}Ex_{0}\notag\\
&-[E(x_{N+1}'Px_{N+1})+(Ex_{N+1})'\bar{P}Ex_{N+1}]\notag\\
&=E(\mathbb{X}_{0}'\mathbb{P}\mathbb{X}_{0})-E(\mathbb{X}_{N+1}'\mathbb{P}\mathbb{X}_{N+1}),
\end{align}
in which $\mathbb{P}=\left[
  \begin{array}{cc}
   P& 0\\
   0        & P+\bar{P}      \\
  \end{array}
\right]$.

Using the symbols denoted above, mean-field system \eqref{ps10} with controller \eqref{control} can be rewritten as
\begin{align}\label{fb}
\mathbb{X}_{k+1}&=\tilde{\mathbb{A}}\mathbb{X}_{k}+\tilde{\mathbb{C}}\mathbb{X}_{k}w_{k},
\end{align}
where $\tilde{\mathbb{A}}=\left[\hspace{-2mm}
  \begin{array}{cc}
   A\hspace{-1mm}+\hspace{-1mm}BK& 0\\
   0        & A\hspace{-1mm}+\hspace{-1mm}\bar{A}\hspace{-1mm}+\hspace{-1mm}(B\hspace{-1mm}+\hspace{-1mm}\bar{B})
   (K\hspace{-1mm}+\hspace{-1mm}\bar{K})      \\
  \end{array}
\hspace{-2mm}\right]$ and $\tilde{\mathbb{C}}=\left[\hspace{-2mm}
  \begin{array}{cc}
   C\hspace{-1mm}+\hspace{-1mm}DK& C\hspace{-1mm}+\hspace{-1mm}\bar{C}\hspace{-1mm}+\hspace{-1mm}(D\hspace{-1mm}+\hspace{-1mm}\bar{D})
   (K\hspace{-1mm}+\hspace{-1mm}\bar{K})\\
   0        & 0\\
  \end{array}
\hspace{-2mm}\right]$.
Thus, the stabilization of system \eqref{ps10} with controller \eqref{control} is equivalent to the stability of system \eqref{fb}, i.e., $(\tilde{\mathbb{A}},\tilde{\mathbb{C}})$ for short.

Following the proof of \emph{Theorem 4} and \emph{Proposition 1} in \cite{zhangw}, we know that the exactly detectability of system \eqref{mf}, i.e., $(A,\bar{A},C,\bar{C},\mathcal{Q}^{1/2})$, implies that the following system is exactly detectable
\begin{equation}\label{mf01}
 \left\{ \begin{array}{ll}
 \mathbb{X}_{k+1}=\tilde{\mathbb{A}}\mathbb{X}_{k}+\tilde{\mathbb{C}}\mathbb{X}_{k}w_{k},\\
\tilde{Y}_{k}=\tilde{\mathcal{Q}}^{1/2}\mathbb{X}_{k}.
\end{array} \right.
\end{equation}
i.e., for any $N\geq 0$,
\begin{equation*}
  \tilde{Y}_{k}= 0, ~\forall~ 0\leq k\leq N~\Rightarrow~\lim_{k\rightarrow+\infty}E(\mathbb{X}_{k}'\mathbb{X}_{k})=0.
\end{equation*}

Now we will show that the initial state $\mathbb{X}_{0}$ is an unobservable state of system \eqref{mf01}, i.e., $(\tilde{\mathbb{A}},\tilde{\mathbb{C}},\tilde{\mathcal{Q}}^{1/2})$ for simplicity, if and only if $\mathbb{X}_{0}$ satisfies $E(\mathbb{X}_{0}'\mathbb{P}\mathbb{X}_{0})=0$.

In fact, if $\mathbb{X}_{0}$ satisfies $E(\mathbb{X}_{0}'\mathbb{P}\mathbb{X}_{0})=0$, from \eqref{lya3} we have
\begin{align}\label{lll}
0\leq \sum_{k=0}^{N}E(\mathbb{X}_{k}'\tilde{\mathcal{Q}}\mathbb{X}_{k})=-E(\mathbb{X}_{N+1}'\mathbb{P}\mathbb{X}_{N+1})\leq 0,
\end{align}
i.e., $\sum_{k=0}^{N}E(\mathbb{X}_{k}'\tilde{\mathcal{Q}}\mathbb{X}_{k})=0$. Thus, we can obtain
\begin{equation*}
  \sum_{k=0}^{N}E(Y_{k}'Y_{k})=\sum_{k=0}^{N}E(\mathbb{X}_{k}'\tilde{\mathcal{Q}}\mathbb{X}_{k})=0,
\end{equation*}
which means for any $k\geq 0$, $\tilde{Y}_{k}=\tilde{\mathcal{Q}}^{1/2}\mathbb{X}_{k}=0$. Hence, $\mathbb{X}_{0}$ is an unobservable state of system $(\tilde{\mathbb{A}},\tilde{\mathbb{C}},\tilde{\mathcal{Q}}^{1/2})$.

On the contrary, if we choose $\mathbb{X}_{0}$ as an unobservable state of $(\tilde{\mathbb{A}},\tilde{\mathbb{C}},\tilde{\mathcal{Q}}^{1/2})$, i.e., $\tilde{Y}_{k}=\tilde{\mathcal{Q}}^{1/2}\mathbb{X}_{k}\equiv 0$, $k\geq 0$. Noting that  $(\tilde{\mathbb{A}},\tilde{\mathbb{C}},\tilde{\mathcal{Q}}^{1/2})$ is exactly detectable, it holds $\lim_{N\rightarrow +\infty}E(\mathbb{X}_{N+1}'\mathbb{P}\mathbb{X}_{N+1})=0$. Thus, from \eqref{lya3} we can obtain that
\begin{equation}\label{130}
  E(\mathbb{X}_{0}'\mathbb{P}\mathbb{X}_{0})\hspace{-1mm}=\hspace{-1mm}
  \sum_{k=0}^{\infty}E(\mathbb{X}_{k}'\tilde{\mathcal{Q}}\mathbb{X}_{k})\hspace{-1mm}=
  \hspace{-1mm}\sum_{k=0}^{\infty}E(\tilde{Y}_{k}'\tilde{Y}_{k})\hspace{-1mm}=\hspace{-1mm}0.
\end{equation}

Therefore, we have shown that $\mathbb{X}_{0}$ is an unobservable state if and only if $\mathbb{X}_{0}$ satisfies $E(\mathbb{X}_{0}'\mathbb{P}\mathbb{X}_{0})=0$.

Next we will show system \eqref{ps10} is stabilizable in mean square sense in two different cases.

1) $\mathbb{P}>0$, i.e., $P>0$ and $P+\bar{P}>0$.

In this case, $E(\mathbb{X}_{0}'\mathbb{P}\mathbb{X}_{0})=0$ implies that $\mathbb{X}_{0}=0$, i.e., $x_{0}=Ex_{0}=0$. Following the discussions as above we know that system $(\tilde{\mathbb{A}},\tilde{\mathbb{C}},\tilde{\mathcal{Q}}^{1/2})$ is exactly observable. Thus it follows from Theorem \ref{succeed} that mean-field system \eqref{ps10} is stabilizable in mean square sense.

2) $\mathbb{P}\geq 0$.

Firstly, it is noticed from \eqref{ly01} and \eqref{ly2} that $\mathbb{P}$ satisfies the following Lyapunov equation:
\begin{equation}\label{ly4}
  \mathbb{P}=\tilde{\mathcal{Q}}+\tilde{\mathbb{A}}'\mathbb{P}\tilde{\mathbb{A}}+\sigma^{2}[\tilde{\mathbb{C}}^{(1)}]'\mathbb{P}\tilde{\mathbb{C}}^{(1)}
  +\sigma^{2}[\tilde{\mathbb{C}}^{(2)}]'\mathbb{P}\tilde{\mathbb{C}}^{(2)},
\end{equation}
where $\tilde{\mathbb{C}}^{(1)}\hspace{-1mm}=\hspace{-1mm}\left[\hspace{-2mm}
  \begin{array}{cc}
   C\hspace{-1mm}+\hspace{-1mm}DK\hspace{-2mm}&\hspace{-2mm} 0\\
   0        \hspace{-2mm}&\hspace{-2mm} 0\\
  \end{array}
\hspace{-2mm}\right]$, $\tilde{\mathbb{C}}^{(2)}\hspace{-1mm}=\hspace{-1mm}\left[\hspace{-2mm}
  \begin{array}{cc}
   0\hspace{-2mm}&\hspace{-2mm} C\hspace{-1mm}+\hspace{-1mm}\bar{C}\hspace{-1mm}+\hspace{-1mm}(D\hspace{-1mm}+\hspace{-1mm}\bar{D})
   (K\hspace{-1mm}+\hspace{-1mm}\bar{K})\\
   0        \hspace{-2mm}&\hspace{-2mm} 0\\
  \end{array}
\hspace{-2mm}\right]$ and $\tilde{\mathbb{C}}^{(1)}+\tilde{\mathbb{C}}^{(2)}=\tilde{\mathbb{C}}$.

Since $\mathbb{P}\geq 0$, thus there exists orthogonal matrix $U$ with $U'=U^{-1}$ such that
\begin{align}\label{upu}
U'\mathbb{P}U=\left[
  \begin{array}{cc}
   0& 0\\
   0        & \mathbb{P}_{2}     \\
  \end{array}
\right], \mathbb{P}_{2}>0.
\end{align}
Obviously from \eqref{ly4} we can obtain that
\begin{align}\label{ly5}
  U'\mathbb{P}U&=U'\tilde{\mathcal{Q}}U+U'\tilde{\mathbb{A}}'U\cdot U'\mathbb{P}U\cdot U'\tilde{\mathbb{A}}U\notag\\
  &+\sigma^{2}U'[\tilde{\mathbb{C}}^{(1)}]'U\cdot U'\mathbb{P}U\cdot U'\tilde{\mathbb{C}}^{(1)}U\notag\\
  &+\sigma^{2}U'[\tilde{\mathbb{C}}^{(2)}]'U\cdot U'\mathbb{P}U\cdot U'\tilde{\mathbb{C}}^{(2)}U.
\end{align}

Assume $U'\tilde{\mathbb{A}}U=\left[\hspace{-2mm}
  \begin{array}{cc}
   \tilde{\mathbb{A}}_{11}\hspace{-2mm}&\hspace{-2mm} \tilde{\mathbb{A}}_{12}\\
   \tilde{\mathbb{A}}_{21}\hspace{-2mm}&\hspace{-2mm} \tilde{\mathbb{A}}_{22}    \\
  \end{array}
\hspace{-2mm}\right]$, $U'\tilde{\mathcal{Q}}U=\left[\hspace{-2mm}
  \begin{array}{cc}
   \tilde{\mathcal{Q}}_{1}\hspace{-2mm}&\hspace{-2mm} \tilde{\mathcal{Q}}_{12}\\
   \tilde{\mathcal{Q}}_{21}\hspace{-2mm}&\hspace{-2mm} \tilde{\mathcal{Q}}_{2}    \\
  \end{array}
\hspace{-2mm}\right]$, $U'\tilde{\mathbb{C}}^{(1)}U=\left[\hspace{-2mm}
  \begin{array}{cc}
   \tilde{\mathbb{C}}_{11}^{(1)}\hspace{-2mm}&\hspace{-2mm} \tilde{\mathbb{C}}_{12}^{(1)}\\
   \tilde{\mathbb{C}}_{21}^{(1)}\hspace{-2mm}&\hspace{-2mm} \tilde{\mathbb{C}}_{22}^{(1)}    \\
  \end{array}
\hspace{-2mm}\right]$ and $U'\tilde{\mathbb{C}}^{(2)}U=\left[\hspace{-2mm}
  \begin{array}{cc}
   \tilde{\mathbb{C}}_{11}^{(2)}\hspace{-2mm}&\hspace{-2mm} \tilde{\mathbb{C}}_{12}^{(2)}\\
   \tilde{\mathbb{C}}_{21}^{(2)}\hspace{-2mm}&\hspace{-2mm} \tilde{\mathbb{C}}_{22}^{(2)}    \\
  \end{array}
\hspace{-2mm}\right]$, we have that
\begin{align*}U'\tilde{\mathbb{A}}'U\hspace{-1mm}\cdot\hspace{-1mm} U'\mathbb{P}U\hspace{-1mm}\cdot\hspace{-1mm} U'\tilde{\mathbb{A}}U&=\left[
  \begin{array}{cc}
   \tilde{\mathbb{A}}_{21}'\mathbb{P}_{2}\tilde{\mathbb{A}}_{21}& \tilde{\mathbb{A}}_{21}'\mathbb{P}_{2}\tilde{\mathbb{A}}_{22}\\
   \tilde{\mathbb{A}}_{22}'\mathbb{P}_{2}\tilde{\mathbb{A}}_{21}& \tilde{\mathbb{A}}_{22}'\mathbb{P}_{2}\tilde{\mathbb{A}}_{22}    \\
  \end{array}
\right],\\
U'\{\tilde{\mathbb{C}}^{(1)}\}'U\hspace{-1mm}\cdot\hspace{-1mm} U'\mathbb{P}U\hspace{-1mm}\cdot\hspace{-1mm} U'\tilde{\mathbb{C}}^{(1)}U&\hspace{-1mm}=\hspace{-1mm}\left[\hspace{-2mm}
  \begin{array}{cc}
   \{\tilde{\mathbb{C}}_{21}^{(1)}\}'\mathbb{P}_{2}\tilde{\mathbb{C}}_{21}^{(1)}\hspace{-3mm}&\hspace{-3mm} \{\tilde{\mathbb{C}}_{21}^{(1)}\}'\mathbb{P}_{2}\tilde{\mathbb{C}}_{22}^{(1)}\hspace{-1mm}\\
   \{\tilde{\mathbb{C}}_{22}^{(1)}\}'\mathbb{P}_{2}\tilde{\mathbb{C}}_{21}^{(1)}\hspace{-3mm}&\hspace{-3mm} \{\tilde{\mathbb{C}}_{22}^{(1)}\}'\mathbb{P}_{2}\tilde{\mathbb{C}}_{22}^{(1)}\hspace{-1mm}
  \end{array}
\hspace{-2mm}\right]\\
U'\{\tilde{\mathbb{C}}^{(2)}\}'U\hspace{-1mm}\cdot\hspace{-1mm} U'\mathbb{P}U\hspace{-1mm}\cdot\hspace{-1mm} U'\tilde{\mathbb{C}}^{(2)}U&\hspace{-1mm}=\hspace{-1mm}\left[\hspace{-2mm}
  \begin{array}{cc}
   \{\tilde{\mathbb{C}}_{21}^{(2)}\}'\mathbb{P}_{2}\tilde{\mathbb{C}}_{21}^{(2)}\hspace{-3mm}&\hspace{-3mm} \{\tilde{\mathbb{C}}_{21}^{(2)}\}'\mathbb{P}_{2}\tilde{\mathbb{C}}_{22}^{(2)}\hspace{-1mm}\\
   \{\tilde{\mathbb{C}}_{22}^{(2)}\}'\mathbb{P}_{2}\tilde{\mathbb{C}}_{21}^{(2)}\hspace{-3mm}&\hspace{-3mm} \{\tilde{\mathbb{C}}_{22}^{(2)}\}'\mathbb{P}_{2}\tilde{\mathbb{C}}_{22}^{(2)}\hspace{-1mm}
  \end{array}
\hspace{-2mm}\right]\end{align*}
Thus, by comparing each block element on both sides of \eqref{ly5} and noting $\mathbb{P}_{2}>0$, we have that $\tilde{\mathbb{A}}_{21}=0$, $\tilde{\mathbb{C}}_{21}^{(1)}=\tilde{\mathbb{C}}_{21}^{(2)}=0$ and $\tilde{\mathcal{Q}}_{1}=\tilde{\mathcal{Q}}_{12}=\tilde{\mathcal{Q}}_{21}=0$, i.e.,
\begin{align}\label{ly7}
U'\tilde{\mathbb{A}}U\hspace{-1mm}=\hspace{-1mm}\left[\hspace{-2mm}
  \begin{array}{cc}
   \tilde{\mathbb{A}}_{11}\hspace{-2mm}&\hspace{-2mm} \tilde{\mathbb{A}}_{12}\\
   0\hspace{-2mm}&\hspace{-2mm} \tilde{\mathbb{A}}_{22}    \\
  \end{array}
\hspace{-2mm}\right]\hspace{-1mm}, U'\tilde{\mathbb{C}}U\hspace{-1mm}=\hspace{-1mm}\left[\hspace{-2mm}
  \begin{array}{cc}
   \tilde{\mathbb{C}}_{11}\hspace{-2mm}&\hspace{-2mm} \tilde{\mathbb{C}}_{12}\\
   0\hspace{-2mm}&\hspace{-2mm} \tilde{\mathbb{C}}_{22}    \\
  \end{array}
\hspace{-2mm}\right]\hspace{-1mm},U'\tilde{\mathcal{Q}}U\hspace{-1mm}=\hspace{-1mm}\left[\hspace{-2mm}
  \begin{array}{cc}
   0\hspace{-2mm}&\hspace{-2mm} 0\\
   0 \hspace{-2mm}&\hspace{-2mm} \tilde{\mathcal{Q}}_{2}     \\
  \end{array}
\hspace{-2mm}\right],
\end{align}
where $\tilde{\mathcal{Q}}_{2}\geq 0$, $ \tilde{\mathbb{C}}_{11}= \tilde{\mathbb{C}}_{11}^{(1)}+ \tilde{\mathbb{C}}_{11}^{(2)}$, $ \tilde{\mathbb{C}}_{12}= \tilde{\mathbb{C}}_{12}^{(1)}+ \tilde{\mathbb{C}}_{12}^{(2)}$ and $ \tilde{\mathbb{C}}_{22}= \tilde{\mathbb{C}}_{22}^{(1)}+ \tilde{\mathbb{C}}_{22}^{(2)}$.

Substituting \eqref{upu} and \eqref{ly7} into \eqref{ly5} yields that
\begin{equation}\label{ly6}
 \mathbb{P}_{2}=\tilde{\mathcal{Q}}_{2}+\tilde{\mathbb{A}}_{22}' \mathbb{P}_{2}\tilde{\mathbb{A}}_{22}+\sigma^{2}\{\tilde{\mathbb{C}}_{22}^{(1)}\}' \mathbb{P}_{2}\tilde{\mathbb{C}}_{22}^{(1)}+
 \sigma^{2}\{\tilde{\mathbb{C}}_{22}^{(2)}\}' \mathbb{P}_{2}\tilde{\mathbb{C}}_{22}^{(2)}.
\end{equation}

Define $U'\mathbb{X}_{k}=\bar{\mathbb{X}}_{k}=\left[\hspace{-2mm}
  \begin{array}{cc}
    \bar{\mathbb{X}}_{k}^{(1)}\hspace{-1mm}\\
    \bar{\mathbb{X}}_{k}^{(2)}    \hspace{-1mm}           \\
  \end{array}
\hspace{-2mm}\right]$, where the dimension of $\bar{\mathbb{X}}_{k}^{(2)} $ is the same as the rank of $\mathbb{P}_{2}$. Thus, from \eqref{fb} we have
\begin{align*}
U'\mathbb{X}_{k+1}&=U'\tilde{\mathbb{A}}UU'\mathbb{X}_{k}+U'\tilde{\mathbb{C}}UU'\mathbb{X}_{k}w_{k},\end{align*} i.e.,
\begin{align}
\bar{\mathbb{X}}_{k+1}^{(\hspace{-0.3mm}1\hspace{-0.3mm})}&=\tilde{\mathbb{A}}_{11}\bar{\mathbb{X}}_{k}^{(\hspace{-0.3mm}1\hspace{-0.3mm})}
\hspace{-1mm}+\hspace{-1mm}\tilde{\mathbb{A}}_{12}\bar{\mathbb{X}}_{k}^{(\hspace{-0.3mm}2\hspace{-0.3mm})}
\hspace{-1mm}+\hspace{-1mm}(\tilde{\mathbb{C}}_{11}\bar{\mathbb{X}}_{k}^{(\hspace{-0.3mm}1\hspace{-0.3mm})}\hspace{-1mm}+\hspace{-1mm}\tilde{\mathbb{C}}_{12}\bar{\mathbb{X}}_{k}^{(\hspace{-0.3mm}2\hspace{-0.3mm})})w_{k},\label{lly1}\\
\bar{\mathbb{X}}_{k+1}^{(2)}&=\tilde{\mathbb{A}}_{22}\bar{\mathbb{X}}_{k}^{(2)}+\tilde{\mathbb{C}}_{22}\bar{\mathbb{X}}_{k}^{(2)}w_{k}.\label{lly2}
\end{align}

Next we will show the stability of $(\tilde{\mathbb{A}}_{22},\tilde{\mathbb{C}}_{22})$.

Actually, recall from \eqref{lya3} and \eqref{ly7}, we have that
\begin{align}\label{lya30}
&~~\sum_{k=0}^{N}E[(\bar{\mathbb{X}}_{k}^{(2)})'\tilde{\mathcal{Q}}_{2}\bar{\mathbb{X}}_{k}^{(2)}]=\sum_{k=0}^{N}E(\mathbb{X}_{k}'\tilde{\mathcal{Q}}\mathbb{X}_{k})\notag\\
&=E(\mathbb{X}_{0}'\mathbb{P}\mathbb{X}_{0})-E(\mathbb{X}_{N+1}'\mathbb{P}\mathbb{X}_{N+1})\notag\\
&=E[(\bar{\mathbb{X}}_{0}^{(2)})'\mathbb{P}_{2}\bar{\mathbb{X}}_{0}^{(2)}]-E[(\bar{\mathbb{X}}_{N+1}^{(2)})'\mathbb{P}_{2}\bar{\mathbb{X}}_{N+1}^{(2)}].
\end{align}
Similar to the discussions from \eqref{lll} to \eqref{130}, we conclude $\bar{\mathbb{X}}_{0}^{(2)}$ is an unobservable state of $(\tilde{\mathbb{A}}_{22},\tilde{\mathbb{C}}_{22},\tilde{\mathcal{Q}}_{2}^{1/2})$ if and only if $\bar{\mathbb{X}}_{0}^{(2)}$ obeys $E[(\bar{\mathbb{X}}_{0}^{(2)})'\mathbb{P}_{2}\bar{\mathbb{X}}_{0}^{(2)}]=0$. Since $\mathbb{P}_{2}>0$, thus $(\tilde{\mathbb{A}}_{22},\tilde{\mathbb{C}}_{22},\tilde{\mathcal{Q}}_{2}^{1/2})$ is exactly observable as discussed in 1). Therefore, following from Theorem \ref{succeed}, we know that
\begin{equation}\label{l2l}
  \lim_{k\rightarrow +\infty}E(\bar{\mathbb{X}}_{k}^{(2)})'\bar{\mathbb{X}}_{k}^{(2)}=0,
\end{equation}
i.e., $(\tilde{\mathbb{A}}_{22},\tilde{\mathbb{C}}_{22})$ is stable in mean square sense.

Thirdly, the stability of $(\tilde{\mathbb{A}}_{11},\tilde{\mathbb{C}}_{11})$ will be shown as below. We might as well choose $\bar{\mathbb{X}}_{0}^{(2)}=0$, then from \eqref{lly2} we have $\bar{\mathbb{X}}_{k}^{(2)}=0$ for any $k\geq 0$. In this case, \eqref{lly1} becomes
\begin{equation}\label{zz}
  \mathbb{Z}_{k+1}=\tilde{\mathbb{A}}_{11}\mathbb{Z}_{k}+\tilde{\mathbb{C}}_{11}\mathbb{Z}_{k}w_{k},
\end{equation}
where $\mathbb{Z}_{k}$ is the value of $\bar{\mathbb{X}}_{k}^{(1)}$ with $\bar{\mathbb{X}}_{k}^{(2)}=0$. Thus, for arbitrary initial state $\mathbb{Z}_{0}=\bar{\mathbb{X}}_{0}^{(1)}$, we have
\begin{equation}\label{lly3}
  E[\tilde{Y}_{k}'\tilde{Y}_{k}]=E[\mathbb{X}_{k}'\tilde{\mathcal{Q}}\mathbb{X}_{k}]=E[(\bar{\mathbb{X}}_{k}^{(2)})'\tilde{\mathcal{Q}}_{2}\bar{\mathbb{X}}_{k}^{(2)}]\equiv 0.
\end{equation}
From the exactly detectability of $(\tilde{\mathbb{A}},\tilde{\mathbb{C}},\tilde{\mathcal{Q}}^{1/2})$, it holds
\begin{align}\label{xxx}
\lim_{k\rightarrow +\infty}\hspace{-2mm}E(\bar{\mathbb{X}}_{k}'\bar{\mathbb{X}}_{k})\hspace{-1mm}=\hspace{-1mm}\lim_{k\rightarrow +\infty}\hspace{-2mm}E(\bar{\mathbb{X}}_{k}'U'U\bar{\mathbb{X}}_{k})\hspace{-1mm}=\hspace{-1mm}\lim_{k\rightarrow +\infty}\hspace{-2mm}E(\mathbb{X}_{k}'\mathbb{X}_{k})\hspace{-1mm}=\hspace{-1mm}0.
\end{align}
Therefore, in the case of $\bar{\mathbb{X}}_{0}^{(2)}=0$, \eqref{xxx} indicates that
\begin{align}\label{l1l}
 &~~ \lim_{k\rightarrow +\infty}\hspace{-2mm}E(\mathbb{Z}_{k}'\mathbb{Z}_{k})\hspace{-1mm}=\hspace{-1mm}
 \lim_{k\rightarrow +\infty}\hspace{-2mm} E[(\bar{\mathbb{X}}_{k}^{(1)})'\bar{\mathbb{X}}_{k}^{(1)}]\\
 &\hspace{-1mm}=\hspace{-2mm}\lim_{k\rightarrow +\infty}\hspace{-2mm} \{E[(\bar{\mathbb{X}}_{k}^{(\hspace{-0.3mm}1\hspace{-0.3mm})})'\bar{\mathbb{X}}_{k}^{(\hspace{-0.3mm}1\hspace{-0.3mm})}]
 \hspace{-1mm}+\hspace{-1mm}E[(\bar{\mathbb{X}}_{k}^{(\hspace{-0.3mm}2\hspace{-0.3mm})})'\bar{\mathbb{X}}_{k}^{(\hspace{-0.3mm}2\hspace{-0.3mm})}]\}
\hspace{-1mm}=\hspace{-2mm}\lim_{k\rightarrow +\infty}\hspace{-2mm}E(\bar{\mathbb{X}}_{k}'\bar{\mathbb{X}}_{k})\hspace{-1mm}=\hspace{-1mm}0.\notag
\end{align}
i.e., $(\tilde{\mathbb{A}}_{11},\tilde{\mathbb{C}}_{11})$ is mean square stable.

Finally we will show that system \eqref{ps10} is stabilizable in mean square sense. In fact, we denote $\tilde{\mathcal{A}}=\left[\hspace{-2mm}
  \begin{array}{cc}
   \tilde{\mathbb{A}}_{11}\hspace{-2mm}&\hspace{-2mm} 0\\
   0\hspace{-2mm}& \hspace{-2mm}\tilde{\mathbb{A}}_{22}    \\
  \end{array}
\hspace{-2mm}\right]$, $\tilde{\mathcal{C}}=\left[\hspace{-2mm}
  \begin{array}{cc}
   \tilde{\mathbb{C}}_{11}\hspace{-2mm}&\hspace{-2mm} 0\\
   0\hspace{-2mm}&\hspace{-2mm} \tilde{\mathbb{C}}_{22}    \\
  \end{array}
\hspace{-2mm}\right]$. Hence, \eqref{lly1}-\eqref{lly2} can be reformulated as
\begin{align}\label{133}
\bar{\mathbb{X}}_{k+1}\hspace{-1mm}=\hspace{-1mm}\{\tilde{\mathcal{A}}\bar{\mathbb{X}}_{k}+\left[\hspace{-1mm}
  \begin{array}{cc}
   \tilde{\mathbb{A}}_{12}\\
   0\\
  \end{array}
\hspace{-1mm}\right]\mathbb{U}_{k}\}\hspace{-1mm}+\hspace{-1mm}\{\tilde{\mathcal{C}}\bar{\mathbb{X}}_{k}\hspace{-1mm}+\hspace{-1mm}\left[\hspace{-1mm}
  \begin{array}{cc}
   \tilde{\mathbb{C}}_{12}\\
   0\\
  \end{array}
\hspace{-1mm}\right]\mathbb{U}_{k}\}w_{k},
\end{align}
where $\mathbb{U}_{k}$ is as the solution to equation \eqref{lly2} with initial condition $\mathbb{U}_{0}=\mathbb{X}_{0}^{(2)}$. The stability of $(\tilde{\mathbb{A}}_{11},\tilde{\mathbb{C}}_{11})$ and $(\tilde{\mathbb{A}}_{22},\tilde{\mathbb{C}}_{22})$ as proved above indicates that $(\tilde{\mathcal{A}},\tilde{\mathcal{C}})$ is stable in mean square sense. Obviously from \eqref{l2l} it holds $\lim_{k\rightarrow +\infty}E(\mathbb{U}_{k}'\mathbb{U}_{k})=0$ and $\sum_{k=0}^{\infty}E(\mathbb{U}_{k}'\mathbb{U}_{k})<+\infty$. By using \emph{Proposition 2.8} and \emph{Remark 2.9} in \cite{abb}, we know that there exists constant $c_{0}$ such that
\begin{align}\label{xu}
\sum_{k=0}^{\infty}E(\bar{\mathbb{X}}_{k}'\bar{\mathbb{X}}_{k})<c_{0}\sum_{k=0}^{\infty}E(\mathbb{U}_{k}'\mathbb{U}_{k})<+\infty.
\end{align}
Hence, $\lim_{k\rightarrow +\infty}E(\bar{\mathbb{X}}_{k}'\bar{\mathbb{X}}_{k})=0$ can be obtained from \eqref{xu}. Furthermore, it is noted from \eqref{xxx} that
\begin{align*}
&\lim_{k\rightarrow +\infty}\hspace{-2mm}E(x_{k}'x_{k})\hspace{-1mm}=\hspace{-1mm}\lim_{k\rightarrow +\infty}\hspace{-2mm}[(x_{k}\hspace{-1mm}-\hspace{-1mm}Ex_{k})'(x_{k}\hspace{-1mm}-\hspace{-1mm}Ex_{k})\hspace{-1mm}+\hspace{-1mm}Ex_{k}'Ex_{k}]\notag\\
&=\lim_{k\rightarrow +\infty}\hspace{-2mm}E(\mathbb{X}_{k}'\mathbb{X}_{k})\hspace{-1mm}=\hspace{-1mm}\lim_{k\rightarrow +\infty}\hspace{-2mm}E(\bar{\mathbb{X}}_{k}'\bar{\mathbb{X}}_{k})\hspace{-1mm}=\hspace{-1mm}0.
\end{align*}
Note that system $(\tilde{\mathbb{A}},\tilde{\mathbb{C}})$ given in \eqref{fb} is exactly mean-field system \eqref{ps10} with controller \eqref{control}. In conclusion, mean-field system \eqref{ps10} can be stabilizable in the mean square sense. The proof is complete.
\end{proof}

%\section*{Acknowledgment}
%The authors would like to thank...
\ifCLASSOPTIONcaptionsoff
  \newpage
\fi

\end{document}